\theoremstyle{plain}
\newtheorem{thm}{Theorem}[section]
\newtheorem{corollary}[thm]{Corollary}
\newtheorem{lemma}[thm]{Lemma}
\newtheorem{rek}[thm]{Remark}
\newcommand{\ST}[1]{\textcolor{blue}{\bm{#1}}}
\newcommand{\NS}[1]{\textcolor{red}{{#1}}}
\newcommand{\pvs}{\par \vspace{.25cm}}
\newcommand{\tsub}[1]{\textsubscript{{#1}}}
\renewcommand{\a}{\alpha}
\renewcommand{\b}{\beta}
\newcommand{\g}{\gamma}
\newcommand{\Znn}{\ensuremath{\mathbb{Z}}^{\geq0}}
\newcommand{\Zp}{\ensuremath{\mathbb{Z}}^{
>0}}
\begin{document}

\title{Black Hole Zeckendorf Games}

\author{
\name{Caroline Cashman\textsuperscript{a}\thanks{CONTACT Caroline Cashman. Email: cecashman@wm.edu},  Steven J. Miller\textsuperscript{b}\thanks{CONTACT Steven J. Miller. Email: sjm1@williams.edu}, Jenna Shuffelton\textsuperscript{c}\thanks{CONTACT Jenna Shuffelton. Email: jms13@williams.edu}, and Daeyoung Son\textsuperscript{a}\thanks{CONTACT Daeyoung Son. Email: ds15@williams.edu},}
\affil{\textsuperscript{a}College of William \& Mary, Willliamsburg, VA; \textsuperscript{b}Williams College, Williamstown, MA; \textsuperscript{c}Williams College, Williamstown, MA; \textsuperscript{d}Williams College, Williamstown, MA}
}

\maketitle

\begin{abstract}
Zeckendorf proved that every positive integer can be written as a decomposition of non-adjacent Fibonacci numbers. Baird-Smith, Epstein, Flint, and Miller converted the process of decomposing an integer $n$ into a 2-player game, using the moves of $F_i + F_{i-1} = F_{i+1}$ and $2F_i = F_{i+1} + F_{i-2}$, where $F_i$ is the $i$\textsuperscript{th} Fibonacci number. They showed non-constructively that for $n \neq 2$, Player 2 has a winning strategy: a constructive solution remains unknown.

We expand on this by investigating ``black hole'' variants of this game. The $F_m$ Black Hole Zeckendorf game is played with any $n$ but solely in columns $F_i$ for $i < m$. Gameplay is similar to the original Zeckendorf game, except any piece that would be placed on $F_i$ for $i \geq m$ is locked out in a ``black hole'' and removed from play. With these constraints, we analyze the games with black holes on $F_3$ and $F_4$ and construct a solution for specific configurations, using a non-constructive proof to lead to a constructive one. We also examine a pre-game in which players take turns placing down $n$ pieces in the outermost columns before the decomposition phase, and find constructive solutions for any $n$.
\end{abstract}

\begin{keywords}
Zeckendorf decompositions; Fibonacci numbers; game theory; recreational mathematics
\end{keywords}

\section{Introduction}
\subsection{Background} The beauty of the Fibonacci numbers is undeniable: a simple sequence, recursively defined by the sum of the two previous numbers, that has the tendency to show up in both natural and surprising places. Indexing so that $F_1=1$, $F_2=2$ and $F_{k+1}=F_k+F_{k-1}$,  Zeckendorf proved a particularly interesting fact about the Fibonacci numbers, namely that any positive integer $n$ can be written as the sum of non-adjacent Fibonacci numbers, known as the number's Zeckendorf decomposition \cite{Ze}. Baird-Smith, Epstein, Flint, and Miller \cite{BEFM1,BEFM2}, created a game from the process of converting a positive integer into its Zeckendorf decomposition using the moves of $F_i + F_{i-1} = F_{i+1}$ and $2F_i = F_{i+1} + F_{i-2}$, where $F_i$ is the $i$\textsuperscript{th} Fibonacci number. We outline the rules to the original Zeckendorf game as follows. \pvs
\begin{enumerate}
    \item \textbf{Setup:} The game is played on a board with columns corresponding to each of the Fibonacci numbers, indexing so that the $1$\textsuperscript{st} column corresponds with $F_1=1$, the $2$\textsuperscript{nd} column corresponds with $F_2=2$ and the $m$\textsuperscript{th} column corresponds with $F_m$, the $m$\textsuperscript{th} Fibonacci number. All $n$ pieces begin in the $1$\textsuperscript{st} column.
    \vspace{.25cm}
    \item \textbf{Gameplay:} Players alternate, selecting their moves from the following. \begin{enumerate}
        \item Adding consecutive terms: If the board contains pieces in both $F_i$ and $F_{i-1}$ columns, players can remove one piece from each column to add as one piece in the $F_{i+1}$ column.
        \item Merging 1's: If the board contains more than one piece in the $F_1$ column, players can remove two pieces from the $F_1$ column to merge as one piece in the $F_{2}$ column.
        \item Splitting: If the board contains more than one piece in the $F_2$ column, players can split two pieces from the $F_2$ column to place one piece in each $F_{1}$ and $F_3$. For $i\geq 3$, players can split  two pieces in the $F_{i}$ column to place one in each $F_{i-2}$ and $F_{i+1}$.
    \end{enumerate}
    \vspace{.25cm}
    \item \textbf{Winning:} The last player to move wins.
\end{enumerate} \pvs
They proved that the game is playable, meaning it always ends in finite time, and that the final board placed down will be equal to the Zeckendorf decomposition of $n$. Moreover, they showed that for all $n\neq 2$, Player $2$ has a winning strategy. Notably, this is not a constructive winning strategy, and instead relies on a parity stealing argument. If one assumes that Player $1$ has a winning strategy, Player $2$ later has the opportunity to steal it, therefore Player $2$ must have a winning strategy. With increasing $n$, the number of possible game positions grows exponentially, making the construction of a winning solution for Player $2$ challenging. Multiple variations of this game have been studied; see \cite{BCDD+15,BJMNSYY,CDHK+6i,CDHK+6ii,CMJDMMN,GMRVY,LLMMSXZ,MSY}. In order to develop a greater understanding of the original Zeckendorf game, we consider a variation occurring on a smaller board. \pvs
\subsection{Main Results} We consider an ``$F_m$ Black Hole'' variation of the Zeckendorf game, where once a piece is placed on some $F_i$ for $i\geq m$, it falls into the ``Zeckendorf Black Hole'' and is permanently removed from game play. This variant reduces the number of possible moves a player has, making the game easier to analyze.  We combine this Black Hole variation with an Empty Board variation, where the game begins with an empty board, and players take turns placing down pieces in the outermost columns until the weighted sum equals the starting value $n$. The last player to place down a piece moves second in the decomposition phase of the game, assuming the role of Player $2$ from the original Zeckendorf game. Combining these two variations is interesting for a variety of reasons. The solution to the $F_m$ Black Hole Zeckendorf game is heavily based on modular arithmetic, while the solution to the Empty Board Zeckendorf game uses a move mirroring strategy common in combinatorial game theory. \pvs
We quickly consider the Empty Board $F_m$ Black Hole Zeckendorf game with black holes on $F_1$, which is not playable, and on $F_2$, which is deterministic. We then shift our attention to the Empty Board $F_m$ Black Hole Zeckendorf game with black holes on $F_3$ and then on $F_4$. We determine which player has a winning strategy for any positive integer $n$, and provide a constructive solution.\pvs
\subsubsection{Terminology}
We first clarify some terminology. We refer to any column corresponding with the $i$\textsuperscript{th} Fibonacci number as the $F_i$ column. The number of pieces in a column at any given game state is $a$ for the $F_1$ column, $b$ for the $F_2$ column, and $c$ for the $F_3$ column, resulting in a game state $(a,b,c)$. Because our solutions are based on modular arithmetic, we also describe game states in terms of $\a,\b,\g$ and $k_1,k_2,k_3$, where $\a$ and $k_1$ correspond with the $F_1$ column, $\b$ and $k_2$ correspond with the $F_2$ column, and $\g$ and $k_3$ with the $F_3$ column. \pvs
To further clarify which player wins at a given position, we use the terms Player $1$ and Player $2$ only when talking about the game as a whole. For a given position within the game, we describe it as either $P$ or $N$, which is standard notation within combinatorial game theory; see \cite{ANW}, \cite{BCG}, \cite{BT}.  A certain game state is a $P$ position if the previous player who has placed it has a winning strategy. A game state is $N$ if the next player to play has a winning strategy. If a given position is $P$, all following possibilities must be $N$ positions, and if a given position is $N$ at least one of the following possibilities must be a $P$ position. When it is necessary to explain our game trees, we assume that Red is the first to move from a given position. 
\subsubsection{$F_3$ Results}
With a black hole on $F_3$, pieces can only be placed in the $F_1$ and $F_2$ columns. The winner of all possible games can be determined based on the value of $a$ and $b$ modulo $3$. We describe a board setup $(a,b)$ as $(3\a+k_1,3\b+k_2)$, where $\a,\b, k_1, k_2 \in \Znn$,  $0 \leq k_1, k_2 \leq 2$. We find that $(a,b)$ is a $P$ position for all $a\equiv b \equiv 0$, $a\equiv 0, b \equiv 1$ or $a\equiv 1, b \equiv 0$. Any other setup is an $N$ position. It follows that when placing $n$ pieces on an empty board, Player $1$ has a constructive winning strategy for $n\equiv 1,2,3,6,8\pmod 9$ and Player $2$ has a constructive winning strategy when $n \equiv 0,4,5,7 \pmod 9$.\pvs
\subsubsection{$F_4$ Results}
When the black hole moves to $F_4$, the game immediately becomes more interesting. The game is relatively straightforward for $(a,0,0)$, which is a $P$ position for all $a \neq 2$. The game is also relatively straightforward for $(0,0,c)$ which is an $N$ position for all $c \neq 0,1, 5$.\pvs
However, once the board is in the position $(a,0,c)$, the minor exceptions from the $(a,0,0)$ and the $(0,0,c)$ cases become incredibly influential in the game reduction. Still, we are able to simplify the game, and determine winners as outlined in Figure \ref{fig:winners(a,0,c)}. Then, we describe the board set up of $(a,0,c)$ as $(3\a+k_1,0,4\g+k_3)$ where $\a,\g, k_1, k_3 \in \Znn$, $0\leq k_1 \leq 2$, and $0 \leq k_3 \leq 3$. We show the winners based on the values of $\a$ and $\g$ below, denoting $P$ positions in bold blue and $N$ positions in red.\pvs
\begin{figure}[h!]
\begin{center}
\begin{tabular}{ | m{35mm}| m{35mm}| m{35mm}| m{35mm} | }
  \hline
  \text{ } & $a \equiv 0\pmod{3}$ & $a \equiv 1\pmod{3}$ & $a \equiv 2\pmod{3}$\\
  \hline
  $c \equiv 0\pmod{4}$ & $\ST{\a \geq \g}$ & $\ST{\forall \a,\g}$ & $\ST{\a \geq \g + 1}$\\
  & $\NS{\a \leq \g -1}$ &  & $\NS{\a \leq \g}$\\
  \hline
  $c \equiv 1\pmod4$& $\ST{\a \geq \g-1}$ & $\ST{\forall \a,\g}$ & $\ST{\a \geq \g}$\\
  & $\NS{\a \leq \g -2}$ &  & $\NS{\a \leq \g-1}$\\
   \hline
  $c \equiv 2\pmod4$& $\NS{\forall \a,\g}$ & $\NS{\a \geq \g +1}$& $\NS{\forall \a,\g}$  \\
  &  & $\ST{\a \leq \g }$&   \\
  \hline
  $c \equiv 3\pmod4$& $\NS{\forall \a,\g}$& $\NS{\a \geq \g}$ & $\NS{\forall \a,\g}$ \\
  & &  $\ST{\a \leq \g -1}$ & \\
  \hline
\end{tabular}
\end{center}
 \caption{Winners for board setups $(a,0,c)$ in an $F_4$ Black Hole Zeckendorf Game. $P$ positions are depicted in bold blue, and $N$ positions are depicted in red.}
    \label{fig:winners(a,0,c)}
\end{figure}
With this knowledge, we continue onto the Empty Board $F_4$ Black Hole Zeckendorf game, where players may only place in the outermost columns; we define the game in this way as the general case gives players far more options, which poses a variety of challenges for determining a constructive solution. We find that Player $1$ has a constructive winning solution for all $n \equiv 1,3,5,7,8,10,12,14,15 \pmod {16}$ such that $n\neq 17,47$, for which Player $2$ has a winning solution. Player $2$ has a constructive winning solution for all $n \equiv 0,2,4,6,9,11,13 \pmod {16}$ such that $n \neq 2,32$, for which Player $1$ has a winning solution. For large values of $n$, $\a \geq \g+1$, so it is possible to quickly determine winners by Figure \ref{fig:winners(a,0,c)}. For smaller values of $n$, it is necessary to explicitly consider certain values of $n$, as the winner is then dependent on the values of $k_1$ and $k_3$, which can lead to exceptions, as with $n=2,17,47,32$.\pvs
Moving the black hole to any $F_m$ such that $m \geq 5$ limits both players' ability to reduce down to a game with a known solution, meaning that a constructive solution is no longer immediately apparent. The fact that there exists no obvious constructive solution to a Zeckendorf game in as few as four columns supports the conjectured complexity of the original Zeckendorf game. \pvs

\section{Rules}
We define an Empty Board  $F_m$  Black Hole Zeckendorf game as follows.
\begin{enumerate}
    \item \textbf{Setup:} The game begins on an empty board with $m-1$ columns. The $F_i$ column is weighted to correspond with the $i$\textsuperscript{th} Fibonacci number, indexing so that $F_1=1$ and $F_2=2$. Players start the game with any positive integer $n$ pieces.
    \vspace{.25cm}
    \item \textbf{Placing the pieces:} Players take turns placing one piece in the outermost columns of the board, namely $F_1$ and $F_{m-1}$, until the weighted sum equals $n$. For us, this is equivalent to placing in columns $F_1$ for an initial board setup $(a)$, $F_1$ and $F_2$ for an initial board setup $(a,b)$ and in columns $F_1$ and $F_3$ for an initial board setup $(a,0,c)$\footnote{We are not able to provide a solution for the general case, as during the decomposition phase neither player is able to reduce the value of the board without giving the other player the option to do so first.}. Placing one piece in the $F_i$ column removes $F_i$ pieces from the pile of $n$, so players can only place in columns such that $F_i$ is less than or equal to the number of pieces left. This stage ends when there are no pieces left to be placed. The last player to place down a piece moves second in the decomposition phase of the game. Thus, it is the goal of both players to either set the board as a $P$ position, or to force their opponent to set the board as an $N$ position
    \vspace{.25cm}
    \item \textbf{Decomposition:} Players now begin the decomposition phase of the game. Players alternate, selecting their moves from the following.
    \begin{enumerate}
        \item \textit{Add:} Add one piece from each $F_i$ and $F_{i+1}$ to combine as one piece on $F_{i+2}$.
        \item \textit{Merge:} Merge two pieces from $F_1$ into one piece in column $F_2$.
        \item \textit{Split:} Split two pieces in column $F_2$ into one in each column $F_1$ and $F_3$ or for $i\geq 3$, split two pieces in column $F_{i}$ into one in each column $F_{i-2}$ and column $F_{i+1}$.
    \end{enumerate}
    \vspace{.25cm}
    \item \textbf{Black Hole:} Note that in the moves above, it is possible for pieces to be placed in $F_m$. In this situation, they become trapped in the ``Zeckendorf Black Hole'', where they are permanently removed from the board.
   \vspace{.25cm}
    \item \textbf{Winning:} The last player to move wins the game. In combinatorial game theory, this is commonly referred to as normal play.
\end{enumerate}\pvs
\begin{thm}
    The Empty Board Black Hole Zeckendorf game is playable and always ends at the Zeckendorf decomposition of $n \pmod{F_m}$.
\end{thm}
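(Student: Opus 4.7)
My plan is to separate the claim into two parts: a modular weighted-sum invariant that pins down what any terminal board must look like, and a termination argument that guarantees we actually reach one. Throughout the decomposition phase I track the on-board weighted sum $S(t) = \sum_i a_i(t)\, F_i$, which by construction equals $n$ at the moment the placing phase ends.

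For the invariant, I would check each move type against the identities $F_i + F_{i+1} = F_{i+2}$, $2F_1 = F_2$, $2F_2 = F_1 + F_3$, and $2F_i = F_{i-2} + F_{i+1}$ for $i \geq 3$. These make every Add, Merge, and Split value-preserving as long as the output stays on the board. The only way $S$ changes is if a produced piece lands in column $F_m$ and is black-holed, which happens exactly for an Add with $i = m-2$ (whose output sits at $F_m$) or a Split at $F_{m-1}$ (one of whose outputs sits at $F_m$), the latter collapsing to a Split at $F_2$ when $m = 3$. A short calculation shows $S$ drops by exactly $F_m$ in every such case, so $S(t) \equiv n \pmod{F_m}$ for all $t$.

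Next I would characterize terminal boards. A board admits no legal move precisely when $a_1 \leq 1$ (no Merge), $a_i \leq 1$ for all $i \geq 2$ (no Split), and no two consecutive columns both contain pieces (no Add). These are exactly the defining conditions of a Zeckendorf decomposition supported on $\{F_1, \ldots, F_{m-1}\}$, whose value lies in $[0, F_m)$. Combined with the invariant, the terminal board must represent $n \bmod F_m$, and uniqueness of Zeckendorf decompositions forces it to be the Zeckendorf decomposition of $n \bmod F_m$.

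For playability, I would adapt the termination monovariant from the original Zeckendorf game of Baird-Smith, Epstein, Flint, and Miller \cite{BEFM1}: it strictly decreases under each legal move there, and our non-black-hole moves are identical. For the two move types that black-hole an output, I would verify separately that the monovariant still strictly decreases; since black-holing only removes pieces from play, I expect this to reduce to a short case check rather than a new construction. The main obstacle I anticipate is exactly this verification, because Split preserves both the piece count and $S$, so the relevant monovariant must involve a finer structural quantity (my first attempt would be a lexicographic combination of piece count and a column-index-weighted statistic), and one must confirm that truncating an output into the black hole cannot accidentally raise this refinement.
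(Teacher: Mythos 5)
Your route is sound and genuinely different from the paper's in its second half. The paper cites the playability and endpoint theorem for the original Zeckendorf game from \cite{BEFM1}, and then argues heuristically that every move pushes at least as many pieces toward the black hole as away from it, so captures must occur, each dropping the board value by $F_m$, until the value is below $F_m$ and the remaining play is an ordinary Zeckendorf game ending at the Zeckendorf decomposition of $n \pmod{F_m}$. You instead make the congruence $S \equiv n \pmod{F_m}$ an explicit invariant and characterize the terminal positions directly (at most one piece in $F_1$, at most one per column $i \geq 2$, no two adjacent occupied columns), so the endpoint follows from uniqueness of Zeckendorf decompositions: a terminal board automatically has value in $[0,F_m)$, hence equals $n \bmod F_m$. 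This is self-contained, and it quietly avoids the one step the paper handles only informally, namely showing that play cannot stall while the value is still at least $F_m$ --- in your setup that is automatic, since no board of value $\geq F_m$ can be terminal.

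Two loose ends remain on your side. First, the termination step is only a plan, and your worry that a lexicographic refinement is needed is misplaced: what you want is a single weighted piece count $\sum_i a_i w_i$ with positive weights making every add, merge, and split strictly decreasing (for instance $w_i=\sqrt{F_i}$ works, since $\sqrt{F_i}+\sqrt{F_{i+1}}>\sqrt{F_{i+2}}$, $2\sqrt{F_1}>\sqrt{F_2}$, and strict concavity gives $2\sqrt{F_i}>\sqrt{F_{i-2}}+\sqrt{F_{i+1}}$ when $2F_i=F_{i-2}+F_{i+1}$). Because such a monovariant is a sum of nonnegative per-piece contributions, black-holing the output only removes additional weight, so the same quantity decreases under every move of the black hole game; with only $m$ columns there are finitely many move types, each decreasing it by at least a fixed positive amount, and termination follows --- no finer structural quantity is needed. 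Second, your enumeration of value-dropping moves omits Merge when $m=2$: there the merged piece lands directly in the black hole and $S$ drops by $2=F_2$, a case the theorem (and the paper's $F_2$ analysis) does cover. Neither issue changes your architecture, but both need to be written out for the argument to be complete.
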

\begin{proof}
    The Empty Board portion of the game does not affect whether the game is playable, as there are a finite number of pieces to place, by definition always resulting in a setup with some integer number of pieces in the $F_1$ and $F_{m-1}$ columns.\pvs
    Since the Zeckendorf Game is playable and always results in the Zeckendorf decomposition of $n$, it is sufficient to show that any Black Hole Zeckendorf game with $n$ pieces and a black hole on $F_m$ reduces to a board such that the weighted sum of pieces is $n \pmod {F_m}$, as that is equivalent to a board in the original Zeckendorf game. From here, the game proceeds as the general Zeckendorf game does, so will reduce to the Zeckendorf decomposition of $n \pmod {F_m}$. \pvs
    Both the add and merge options combine two pieces into one piece in a column corresponding to a greater $F_m$, therefore closer to the black hole. The split option moves one piece towards the black hole and one piece away from it. Since there are a finite number of pieces, all moves shift at least the same amount of pieces towards the black hole as they do away from it, meaning that pieces must eventually be placed into the black hole if the weighted sum of pieces is greater than $F_m$. Every time a piece is placed in the black hole, it decreases the value of the board by $F_m$. Thus, the game must eventually reduced to a board such that the weighted sum of all pieces is $n \pmod{F_m}$.
\end{proof}
Note that the weighted sum all pieces on the board is a non-increasing monovariant, as pieces must eventually be placed into the black hole and every time a piece is placed in the black hole, it reduces the value of the board by $F_m$. This is different from the original Zeckendorf game, in which the number of pieces on the board is a non-increasing monovariant, but the weighted sum of the board itself is constant. \pvs
We consider the games with black holes on $F_2$, $F_3$, and $F_4$. For the game with a black hole on $F_2$, possible outcomes are $(0)$ and $(1)$. For the game with a black hole on $F_3$, possible outcomes are $(0,0)$, $(1,0)$, and $(0,1)$. For the game with a black hole on $F_4$, possible outcomes are $(0,0,0)$, $(1,0,0)$, $(0,1,0)$, $(0,0,1)$ and $(1,0,1)$.\pvs
To play through games, we draw game trees, denoting the player to place the setup with bold blue text, and the player to move next with red text. We label each edge with the corresponding move, labeling as follows:
\begin{enumerate}
    \item M for merging,
    \item A\tsub{1} for adding from columns $F_1$ and $F_2$,
    \item A\tsub{2} for adding from columns $F_2$ and $F_3$,
    \item S\tsub{2} for splitting from column $F_2$, and
    \item S\tsub{3} for splitting from column $F_3$.
\end{enumerate} \pvs We organize our trees so that options $1$ through $5$ are considered from left to right. Note that not all moves are always possible. In the game with a black hole on $F_2$, the only possible move is to merge, and in the game with a black hole on $F_3$, only the moves $M$, $A_1$ and $S_1$ are possible. For game states that we reference later in the proof, we label them by the color that placed it, the round it was placed, and which move was used to place it, noting both if two moves can be used to reach that state. Again, we assume that Red is the next player to move from a given position. See Figure \ref{fig:exgame}, as an example of the possible game moves from  the setup $(a,b,c)$.\pvs
\begin{figure}
    \begin{center}
        \begin{tikzpicture}
            \node (ST1) at (0,0){$\ST{(a,b,c)}$};
	           \node [label={[label distance=1mm]270:R.1.M}](NS1i) at (-6,-1.5){$\NS{(a-2,b+1,c)}$};
		        \node [label={[label distance=1mm]270:R.1.A\tsub{1}}](NS1ii) at (-3,-1.5){$\NS{(a-1,b-1,c+1)}$};
                \node [label={[label distance=1mm]270:R.1.A\tsub{2}}](NS1iii) at (0,-1.5){$\NS{(a,b-1,c-1)}$};
                \node [label={[label distance=1mm]270:R.1.S\tsub{2}}](NS1iv) at (3,-1.5){$\NS{(a+1,b-2,c+1)}$};
                \node [label={[label distance=1mm]270:R.1.S\tsub{3}}](NS1v) at (6,-1.5){$\NS{(a+1,b,c-2)}$};
            \draw  (ST1) -- node[left=4mm] {M}(NS1i);
            \draw  (ST1) -- node[left=2mm] {A\tsub{1}}(NS1ii);
            \draw  (ST1) -- node[right] {A\tsub{2}}(NS1iii);
            \draw  (ST1) -- node[right=2mm] {S\tsub{2}}(NS1iv);
            \draw  (ST1) -- node[right=4mm] {S\tsub{3}}(NS1v);
        \end{tikzpicture}
    \end{center}
    \caption{Example Game Tree for a Setup $(a,b,c)$,}
    \label{fig:exgame}
\end{figure}
\section{Game with a Black hole on \texorpdfstring{$F_1$}{F1} or \texorpdfstring{$F_2$}{F2}}
We first consider the $F_1$ and $F_2$ Black Hole Zeckendorf games.\pvs
As we define it, the $F_1$ Black Hole Zeckendorf game is not possible, as every piece is immediately trapped in the black hole. There is also no Empty Board Black Hole Zeckendorf game to play, as by definition, players cannot place on the black hole.\pvs
In an $F_2$ Black Hole Zeckendorf game, the only possible move is to merge pieces into the black hole and there is only one column to place in for the Empty Board game, meaning it is deterministic. We outline the winners for the $F_2$ Black Hole Zeckendorf game and the Empty Board $F_2$ Black Hole Zeckendorf game below. \pvs
\begin{thm}
    Let $(a)$ be a setup for an $F_2$ Black Hole Zeckendorf game. This is a $P$ position for $a\equiv 0,1 \pmod 4$ and an $N$ position for $a \equiv 2,3 \pmod 4$. For the Empty Board $F_2$ Black Hole Zeckendorf game with $n$ pieces, Player $1$ wins for all $n\equiv 1,2 \pmod 4$ and Player $2$ wins for all $n \equiv 0,3 \pmod 4$.
\end{thm}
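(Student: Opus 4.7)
The plan is to exploit the fact that the $F_2$ Black Hole game is essentially trivial: with only one non-terminal column $F_1$, the sole legal move is the merge that sends two pieces from $F_1$ into the black hole. Thus the game is entirely deterministic, and from any setup $(a)$ the play consists of exactly $\lfloor a/2 \rfloor$ forced merges, terminating at $(0)$ when $a$ is even and at $(1)$ when $a$ is odd.

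First I would verify the classification of $P$ and $N$ positions by a short induction on $a$. The base cases $(0)$ and $(1)$ admit no move, hence are $P$ positions, matching $a \equiv 0, 1 \pmod 4$. For $a \geq 2$ the unique successor is $(a-2)$, so by induction $(a)$ is $P$ exactly when $(a-2)$ is $N$, which gives the period-$4$ pattern in the statement. Equivalently, the next player wins iff $\lfloor a/2 \rfloor$ is odd, i.e.\ $a \equiv 2, 3 \pmod 4$.

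For the Empty Board version, the outermost (and only) legal placement column is $F_1$, so at each of the $n$ placement turns a player deposits exactly one piece in $F_1$. Thus Player $1$ places on odd-numbered turns and Player $2$ on even-numbered turns, and the decomposition phase begins with board $(n)$. By the rule that the last player to place moves \emph{second} in the decomposition phase, Player $1$ moves first in the decomposition precisely when $n$ is even. I would then case-split on $n\bmod 4$ using the $P/N$ classification above: when $n \equiv 1 \pmod 4$ the board $(n)$ is $P$ and Player $2$ is to move, so Player $1$ wins; when $n \equiv 2 \pmod 4$ the board is $N$ and Player $1$ is to move, so Player $1$ wins; when $n \equiv 3 \pmod 4$ the board is $N$ and Player $2$ is to move, so Player $2$ wins; and when $n \equiv 0 \pmod 4$ the board is $P$ and Player $1$ is to move, so Player $2$ wins.

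There is no real obstacle here — the argument is bookkeeping. The only point that requires care is correctly tracking which player moves first in the decomposition, since this swap (induced by the ``last to place moves second'' rule) is what causes the winning residues for the Empty Board game to differ from those for the plain game. I would state the two parity observations (parity of $\lfloor a/2 \rfloor$ determining $P/N$, and parity of $n$ determining who starts the decomposition) explicitly, and then present the four residue classes in a single short table to conclude.
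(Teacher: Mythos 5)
Your proposal is correct and takes essentially the same approach as the paper: an induction on $a$ exploiting the single forced merge to get the period-$4$ classification of $P$ and $N$ positions, followed by parity bookkeeping of who places the last piece (and hence moves second) in the Empty Board phase, with all four residue classes matching the paper's conclusions. The only cosmetic difference is that you induct in steps of $2$ (equivalently via the parity of $\lfloor a/2\rfloor$) while the paper steps by $4$ through the two forced moves.
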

\begin{proof}
    We proceed by induction on $a$. As base cases, see that there are no moves from $(0)$ or $(1)$, so it is trivially a $P$ position. Then, for induction's sake, assume $(a)$ is a $P$ position for some $a \equiv 0,1 \pmod 4$ and consider $a +4  \equiv 0,1 \pmod 4$. The following move must be $(a+2)$, and the move after must be $(a)\equiv 0,1 \pmod 4$, which is a $P$ position by assumption. Thus, $(a)$ is a $P$ position for all $a \equiv 0,1 \pmod 4$.
    It follows that $(a)$ is an $N$ position for all $a \equiv 2,3 \pmod 4$, as the following player places some $(a-2) \equiv 0,1 \pmod 4$, which we showed is a $P$ position.\pvs
    For the Empty Board $F_2$ Black Hole Zeckendorf game, players can only place pieces in the $F_1$ column, which has weight 1. Player $2$ concludes the empty board phase when $n$ is even, so they win if they set a $P$ position, and lose if they set an $N$ position.  Therefore, Player $2$ wins for all $n\equiv 0 \pmod 4$, and Player $1$ wins for all $n \equiv 2 \pmod 4$. \pvs
    When $n$ is odd, Player $1$ concludes the empty board phase, and likewise, they win if they set a $P$ position and lose if they set an $N$ position. Therefore, Player $1$ wins for all $n \equiv 1 \pmod 4$, and Player $2$ wins for all $n \equiv 3 \pmod 4$.
\end{proof}
\section{Game with a Black Hole on \texorpdfstring{$F_3$}{F3}}
We now consider the game with a black hole on $F_3=3$. Here players can choose how they move, making the game more interesting.
\subsection{Single Column Winning Board Setups}
We first consider which setups are $P$ positions. We start by considering all pieces in one column.
\begin{thm}\label{thm:Swin}
Let $(a,b)$ be a setup for an $F_3$ Black Hole Zeckendorf game. When $a\equiv 0,1 \pmod 3$ and $b=0$, this is a $P$ position and likewise this is a $P$ position when $a=0$ and $b\equiv 0,1 \pmod 3$.
\end{thm}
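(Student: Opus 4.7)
The plan is to proceed by strong induction on $a$ for the first half of the statement, and symmetrically on $b$ for the second half. First I would catalog the legal moves available on the restricted two-column board. Merging (M) sends $(a, b) \mapsto (a-2, b+1)$; adding $A_1$ consumes one piece from each of $F_1$ and $F_2$ and produces a piece on $F_3$ that is immediately swallowed by the black hole, so it sends $(a, b) \mapsto (a-1, b-1)$; and splitting $S_2$ places one of its two output pieces on $F_3$ (black-holed) and the other on $F_1$, so it sends $(a, b) \mapsto (a+1, b-2)$. The base cases $(0, 0)$, $(1, 0)$, and $(0, 1)$ admit no legal moves and are therefore $P$ positions trivially, establishing the statement for the smallest relevant values of $a$ and $b$.

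For the inductive step on $(a, 0)$ with $a \geq 2$ and $a \equiv 0$ or $1 \pmod{3}$, the only legal move is M, producing $(a-2, 1)$; it suffices to show that $(a-2, 1)$ is an $N$ position, which I would do by exhibiting a single continuation from it that lands on a $P$ position. Since $a \equiv 0, 1 \pmod 3$ together with $a \geq 2$ forces $a \geq 3$, the move $A_1$ is legal from $(a-2, 1)$ and produces $(a-3, 0)$. Because $a - 3 \equiv a \pmod{3}$, the induction hypothesis says $(a-3, 0)$ is $P$, so $(a-2, 1)$ is $N$ and $(a, 0)$ is $P$.

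The argument for $(0, b)$ with $b \equiv 0, 1 \pmod 3$ is essentially the mirror image. For $b \geq 2$ the only legal move is $S_2$, producing $(1, b-2)$, and since $b \equiv 0, 1 \pmod 3$ with $b \geq 2$ forces $b \geq 3$, the move $A_1$ from $(1, b-2)$ is legal and lands on $(0, b-3)$, which is $P$ by induction. Hence $(1, b-2)$ is $N$, and so $(0, b)$ is $P$.

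I do not expect any substantial obstacle. The one place where care is required is verifying that both coordinates of the intermediate positions $(a-2, 1)$ and $(1, b-2)$ are large enough for the $A_1$ continuation to be legal; this is precisely where the congruence hypothesis enters, since it rules out the troublesome cases $a = 2$ and $b = 2$ that would otherwise break the argument. Beyond that bookkeeping, the proof is routine case analysis driven by the fact that single-column setups admit at most one legal move.
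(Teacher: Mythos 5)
Your proposal is correct and follows essentially the same route as the paper: from the single-column position the unique move is forced (M from $(a,0)$, S\tsub{2} from $(0,b)$), and the responder answers with A\tsub{1} to return to a single-column position three smaller in the same residue class, closing the induction. The only cosmetic difference is that you induct directly on $a$ (resp.\ $b$) rather than on $\a$ (resp.\ $\b$) with $k_1,k_2\in\{0,1\}$, and you state explicitly the legality check ($a\ge 3$, $b\ge 3$) that the paper leaves implicit.
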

\begin{proof}
    For games with $a\equiv 0,1 \pmod 3$ and $b=0$, we proceed by induction on $\a$. If the game starts with $0$ or $1$ pieces, in either the $F_1$ or $F_2$ column, then $(a,b)$ is trivially a $P$ position.\pvs
    As our inductive hypothesis, assume that $(3\a+k_1,0)$ is a $P$ position for a fixed $\a$ with $k_1=0,1$. Then, consider the corresponding game trees below, with $k_1=0$ on the left and $k_1=1$ on the right.  \pvs
    \begin{center}
    \begin{tikzpicture}
\node (AST1) at (-3,0){$\ST{(3(\a+1),0)}$};
	\node (ANS1i) at (-3,-1.5){$\NS{(3\a+1,1)}$};
		\node (AST2i) at (-3,-3){$\ST{(3\a,0)}$};
\node (BST1) at (3,0){$\ST{(3(\a+1)+1,0)}$};
	\node (BNS1i) at (3,-1.5){$\NS{(3\a+2,1)}$};
		\node (BST2i) at (3,-3){$\ST{(3\a+1,0)}$};
            \draw  (AST1) -- node[right] {M}(ANS1i);
            \draw  (ANS1i) -- node[right] {A\tsub{1}}(AST2i);
            \draw  (BST1) -- node[right] {M}(BNS1i);
            \draw  (BNS1i) -- node[right] {A\tsub{1}}(BST2i);
        \end{tikzpicture}
\end{center}\pvs
    In both trees, it is possible to reduce down to a setup that wins by the inductive hypothesis, proving that it is indeed a $P$ position.\pvs
    For games with $b\equiv 0,1 \pmod 3$ and $a=0$, we proceed by induction on $\b$. Assume as an inductive hypothesis that $(0,3\b+k_2)$ is a $P$ position for a fixed $\b$ with $k_2=0,1$. Consider the corresponding game trees below, with $k_2=0$ on the left and $k_2=1$ on the right. \pvs
    \begin{center}
    \begin{tikzpicture}
\node (AST1) at (-3,0){$\ST{(0,3(\b+1))}$};
	\node (ANS1i) at (-3,-1.5){$\NS{(1,3\b+1)}$};
		\node (AST2i) at (-3,-3){$\ST{(0,3\b)}$};
\node (BST1) at (3,0){$\ST{(0,3(\b+1)+1)}$};
	\node (BNS1i) at (3,-1.5){$\NS{(1,3\b+2)}$};
		\node (BST2i) at (3,-3){$\ST{(0,3\b+1)}$};

            \draw  (AST1) -- node[right] {S\tsub{2}}(ANS1i);
            \draw  (ANS1i) -- node[right] {A\tsub{1}}(AST2i);
            \draw  (BST1) -- node[right] {S\tsub{2}}(BNS1i);
            \draw  (BNS1i) -- node[right] {A\tsub{1}}(BST2i);
        \end{tikzpicture}
\end{center}\pvs
Again, it is possible to reduce to a setup that wins by the inductive hypothesis, proving the claim.\pvs

Therefore, for an $F_3$ Black Hole Zeckendorf game, $(a,b)$ is a $P$ position for all board setups with $a \text{ or } b \equiv 0,1 \pmod 3$, and the other equal to $0$.
\end{proof}\pvs
\begin{thm}\label{thm:NSwin}
Let $(a,b)$ be a setup for an $F_3$ Black Hole Zeckendorf game. If $a \equiv 2 \pmod 3$ and $b=0$ or starts with $a=0$ and $b\equiv 2 \pmod 3$, then $(a,b)$ is an $N$ position.
\end{thm}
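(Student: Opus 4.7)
The plan is to reduce each case to an auxiliary claim about two-column $P$ positions not covered by Theorem \ref{thm:Swin}, and then prove everything by joint strong induction. In the $F_3$ Black Hole game the only legal moves are $M$, $A_1$, $S_2$, and the first observation is that from $(3\alpha+2, 0)$ both $A_1$ and $S_2$ fail (they need $b \geq 1$ and $b \geq 2$ respectively), so the Next player is forced to play $M$, landing on $(3\alpha, 1)$. Symmetrically, from $(0, 3\beta+2)$ the only legal move is $S_2$, and since the displaced $F_3$ piece is absorbed by the black hole, the forced landing spot is $(1, 3\beta)$. Thus the theorem reduces to the auxiliary claim that $(3\alpha, 1)$ and $(1, 3\beta)$ are $P$ positions for every $\alpha, \beta \geq 0$, a statement not implied by Theorem \ref{thm:Swin}.

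For the first case, I induct on $\alpha$. The base $\alpha = 0$ is immediate: $(2, 0)$ merges to $(0, 1)$, which has no legal moves and is trivially $P$, making $(2, 0)$ an $N$ position. For the inductive step, assume the main statement and the auxiliary $P$-claim hold for all smaller indices. The position $(3\alpha, 1)$ has exactly two legal moves: $A_1$ lands on $(3(\alpha-1)+2, 0)$, an $N$ position by the outer hypothesis; $M$ lands on $(3(\alpha-1)+1, 2)$, which I claim is also $N$, because $A_1$ from $(3(\alpha-1)+1, 2)$ reaches $(3(\alpha-1), 1)$, a $P$ position by the auxiliary hypothesis. Both options from $(3\alpha, 1)$ therefore lead to $N$ positions, so $(3\alpha, 1)$ is $P$; the forced move from $(3\alpha+2, 0)$ lands on this $P$ position, giving $(3\alpha+2, 0)$ an $N$ classification. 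The mirror argument for $(0, 3\beta+2)$ simply swaps the active move, using the auxiliary $P$-claim for $(1, 3\beta)$: its options are $A_1 \to (0, 3(\beta-1)+2)$ ($N$ by outer hypothesis) and $S_2 \to (2, 3(\beta-1)+1)$, the latter being $N$ because $A_1$ from there reaches $(1, 3(\beta-1))$, which is $P$ by the auxiliary hypothesis.

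The main obstacle I anticipate is pinpointing the correct bundle of auxiliary claims rather than any single verification. Theorem \ref{thm:Swin} only classifies single-column $P$ positions, yet the forced responses in both cases land on two-column boards, so the induction must simultaneously track $(3\alpha, 1)$ and $(1, 3\beta)$ as $P$ positions and $(3\gamma+1, 2)$ and $(2, 3\gamma+1)$ as $N$ positions, foreshadowing the complete mod-$3$ classification highlighted in the introduction. Once this bundle is named, each step reduces to routine move enumeration, but care is needed at the smallest indices ($\alpha, \beta \in \{0, 1\}$), where not every move is legal, to make sure the enumeration of options is complete.
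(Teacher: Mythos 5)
Your proposal is correct and follows essentially the same route as the paper: the paper's induction likewise exploits the forced moves from $(3\a+2,0)$ and $(0,3\b+2)$ to $(3\a,1)$ and $(1,3\b)$, shows these are $P$ positions by reducing the opponent's two replies (through $(3\a+1,2)$, resp.\ $(2,3\b+1)$) back to the smaller-index case, and records your ``auxiliary claim'' separately as Corollary \ref{cor:NSwin}. Your version merely makes the joint induction and the $N$-status of $(3\a+1,2)$ and $(2,3\b+1)$ explicit, which the paper leaves implicit in its game trees.
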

\begin{proof}
    We proceed by induction. If the game starts with $2$ pieces in the $F_1$ column and none in the other, the next player to move can merge to place $(0,1)$ which wins. If the game starts with $2$ pieces in the $F_2$ column, and none in the $F_1$ column, the next player to move can split to place $(1,0)$ which wins. \pvs
    For induction's sake, suppose the setups $(3\a+2,0)$ and $(0,3\b+2)$ are $N$ positions. Since the only possible moves from here are $(3\a,1)$ and $(1,3\b)$ respectively, this assumption also implies that $(3\a,1)$ and $(1,3\b)$ are $P$ positions. Then, consider the game trees below, inducting on $\a$ in the left tree and $\b$ in the right tree. \begin{center}
    \begin{tikzpicture}
\node (AST1) at (-4,0){$\ST{(3(\a+1)+2,0)}$};
	\node (ANS1i) at (-4,-1.5){$\NS{(3(\a+1),1)}$};
		\node (AST2i) at (-2,-3){$\ST{(3\a+2,0)}$};
        \node (AST2ii) at (-6,-3){$\ST{(3\a+1,2)}$};
            \node (ANS2i) at (-4,-4.5){$\NS{(3\a,1)}$};
\node (BST1) at (4,0){$\ST{(0,3(\b+1)+2)}$};
	\node (BNS1i) at (4,-1.5){$\NS{(1,3(\b+1))}$};
		\node (BST2i) at (2,-3){$\ST{(0,3\b+2)}$};
        \node (BST2ii) at (6,-3){$\ST{(2,3\b+1)}$};
            \node (BNS2i) at (4,-4.5){$\NS{(1,3\b)}$};
            \draw  (AST1) -- node[right] {M}(ANS1i);
            \draw  (ANS1i) -- node[right=3mm] {A\tsub{1}}(AST2i);
            \draw  (ANS1i) -- node[left=3mm] {M}(AST2ii);
            \draw  (AST2i) -- node[right=3mm] {M}(ANS2i);
            \draw  (AST2ii) -- node[left=3mm] {A\tsub{1}}(ANS2i);

            \draw  (BST1) -- node[right] {S\tsub{2}}(BNS1i);
            \draw  (BNS1i) -- node[left=3mm] {A\tsub{1}}(BST2i);
            \draw  (BNS1i) -- node[right=3mm] {S\tsub{2}}(BST2ii);
            \draw  (BST2ii) -- node[right=3mm] {A\tsub{1}}(BNS2i);
            \draw  (BST2i) -- node[left=3mm] {S\tsub{2}}(BNS2i);
        \end{tikzpicture}
\end{center}\pvs
Since we assumed $(3\a,1)$ and $(1,3\b)$ are $P$ positions, and showed that it is possible to reduce there from $(3(\a+1),1)$ and $(1,3(\b+1))$ respectively, it follows that boards of the form $(3\a+2,0)$ or $(0,3\b+2)$ are both $N$ positions
\end{proof}\pvs
\begin{corollary}\label{cor:NSwin}
    Let $(a,b)$ be a setup for an $F_3$ Black Hole Zeckendorf game. $(3\a,1)$ and $(1,3\b)$ are both $P$ positions.
\end{corollary}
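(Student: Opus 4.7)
The plan is to derive this as a direct consequence of Theorem~\ref{thm:NSwin} via a one-step reachability argument, exploiting the fact that the setups $(3\alpha+2,0)$ and $(0,3\beta+2)$ each admit exactly one legal move in the $F_3$ Black Hole game. First I would enumerate the moves available at $(3\alpha+2,0)$: the merge move M requires two pieces in $F_1$ (available, producing $(3\alpha,1)$); the add move A\tsub{1} requires at least one piece in each of $F_1$ and $F_2$ (not available since $b=0$); the splits S\tsub{2}, S\tsub{3} and the add A\tsub{2} are all impossible since there are no pieces in $F_2$ or $F_3$. So M is the sole legal move, and its image is $(3\alpha,1)$. By the symmetric enumeration at $(0,3\beta+2)$, only S\tsub{2} is legal (the emergent $F_3$ piece being swallowed by the black hole), with image $(1,3\beta)$.

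Next I would invoke the definition of $N$ position: a game state is $N$ exactly when it has at least one $P$ successor. Theorem~\ref{thm:NSwin} tells us $(3\alpha+2,0)$ and $(0,3\beta+2)$ are both $N$, so each must have a $P$ successor, and since each has exactly one successor, that unique successor is forced to be $P$. Hence $(3\alpha,1)$ and $(1,3\beta)$ are both $P$, as claimed. The base case $\alpha=0$ (respectively $\beta=0$) is consistent with this, since $(0,1)$ and $(1,0)$ admit no legal moves and are trivially $P$ positions, matching the base step already used in Theorem~\ref{thm:Swin}.

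There is no serious obstacle here: the corollary is essentially a re-packaging of information already produced along the way in Theorem~\ref{thm:NSwin}. The only point requiring care is the careful move enumeration, to confirm that each of the two $N$ positions in question truly has a unique successor, since otherwise we could only conclude that \emph{some} successor is $P$ rather than singling out the particular states $(3\alpha,1)$ and $(1,3\beta)$.
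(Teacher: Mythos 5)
Your proof is correct, and it takes a mildly different route from the paper's own proof of this corollary. The paper argues by induction: base cases $(1,0)$, $(0,1)$, and then the observation (read off the game trees in Theorem~\ref{thm:NSwin}) that whoever places $(3\alpha,1)$ or $(1,3\beta)$ can answer either of the opponent's two replies by returning to $(3(\alpha-1),1)$ or $(1,3(\beta-1))$, which exhibits an explicit ``reduce by three each round'' strategy. You instead treat Theorem~\ref{thm:NSwin} as a black box and combine it with the definition of an $N$ position: since $(3\alpha+2,0)$ and $(0,3\beta+2)$ are $N$ and your move enumeration shows each has exactly one legal successor (M giving $(3\alpha,1)$, respectively S\tsub{2} giving $(1,3\beta)$ with the third piece absorbed by the black hole), that unique successor must be $P$. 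This is logically sound and non-circular, since Theorem~\ref{thm:NSwin} is proved independently of the corollary; in fact the paper already uses exactly your ``unique successor of an $N$ position'' observation inside the inductive step of Theorem~\ref{thm:NSwin}, so your argument repackages that step as a deduction from the theorem's statement. What you give up is the explicit winning strategy from $(3\alpha,1)$ and $(1,3\beta)$ that the paper's induction displays (and later reuses), but for the truth of the corollary itself your shorter derivation suffices, and your care in verifying that M, respectively S\tsub{2}, is the only available move is precisely the point that makes it work.
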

\begin{proof}
    As base cases $(1,0)$ and $(0,1)$ both win trivially. As shown in the proof of Theorem \ref{thm:NSwin}, any player who places $(3\a,1)$ or $(1,3\b)$ can reduce to $(3(\a-1),1)$ and $(1,3(\b-1))$ respectively, so $(3\a,1)$ or $(1,3\b)$ win by induction.
\end{proof}
\subsection{General Winning Setups}
We now consider general winning setups of the form $(a,b)$. We show any case can be reduced modulo 3, and since both players have winning strategies in the lower cases, both players have winning strategies for higher cases. \pvs
\subsubsection{$P$ Positions}
\begin{thm}\label{thm:winequiv0}
    Let $(a,b)$ be a setup for an $F_3$ Black Hole Zeckendorf game. If $a \equiv b \equiv 0 \pmod 3$, then $(a,b)$ is a $P$ position.
\end{thm}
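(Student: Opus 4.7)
The plan is to prove the claim by strong induction on $\a + \b$, writing $(a, b) = (3\a, 3\b)$ throughout and showing that the placer (Blue) has a winning strategy. The base cases are already in hand: $(0,0)$ admits no legal move and so is trivially a $P$ position, while the cases where exactly one of $\a, \b$ is zero are $P$ positions by Theorem \ref{thm:Swin} applied to $(3\a, 0)$ or $(0, 3\b)$. So it suffices to carry out the inductive step for $\a, \b \geq 1$ under the hypothesis that every $(3\a', 3\b')$ with $\a' \leq \a$, $\b' \leq \b$, and $\a' + \b' < \a + \b$ is already known to be a $P$ position.

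The driving observation is that from $(3\a, 3\b)$ the next player (Red) has exactly three possible moves (the move S\tsub{3} is unavailable because the $F_3$ column is the black hole and holds no pieces), and in each case Blue has a natural reply that returns the board to another state of the same modular form but with strictly smaller index sum. If Red plays M, reaching $(3\a - 2, 3\b + 1)$, then Blue plays A\tsub{1} and arrives at $(3(\a-1), 3\b)$. If Red plays A\tsub{1}, reaching $(3\a - 1, 3\b - 1)$, then Blue plays M and again arrives at $(3(\a-1), 3\b)$. If Red plays S\tsub{2}, reaching $(3\a + 1, 3\b - 2)$, then Blue plays A\tsub{1} and arrives at $(3\a, 3(\b-1))$. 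Each of Blue's replies is legal (the hypothesis $\a \geq 1$ provides enough pieces in $F_1$ for the first two responses, and $\b \geq 1$ provides enough pieces in $F_2$ for the third), and each resulting position is a $P$ position by the inductive hypothesis. Hence every move from $(3\a, 3\b)$ lands in an $N$ position, confirming that $(3\a, 3\b)$ is $P$.

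The main obstacle, if there is one, is pattern-matching the right Blue-reply to each of Red's three options; once those three pairings are written down, the only remaining work is bookkeeping. One should check that A\tsub{1} and S\tsub{2} really do drop the would-be $F_3$ piece into the black hole (so the resulting state stays within the two-column space), verify legality at the boundary cases $\a = 1$ or $\b = 1$, and confirm that the three listed moves exhaust Red's options. Since each two-ply exchange deterministically shifts $(3\a, 3\b)$ to either $(3(\a-1), 3\b)$ or $(3\a, 3(\b-1))$, the induction terminates quickly and the argument is more bookkeeping than combinatorially subtle.
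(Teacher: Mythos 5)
Your proof is correct, and it follows the same basic game-tree idea as the paper — pair each of Red's three moves (M, A\tsub{1}, S\tsub{2}) with a designated Blue reply and close the argument by induction — but the induction is organized differently, and in a way that genuinely simplifies one branch. The paper inducts on $\b$ alone with $\a$ arbitrary, replying to Red's A\tsub{1} with S\tsub{2} and to S\tsub{2} with A\tsub{1} so as to land back at $(3\a,3\b)$; but in the merge branch Blue's reply lands at $(3(\a-1),3(\b+1))$, which is \emph{not} covered by that inductive hypothesis, so the paper needs a supplementary argument (``if the next player always chooses to merge, the game eventually reduces to $(0,3(\b+1))$,'' which is then a $P$ position by Theorem \ref{thm:Swin}). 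By instead doing strong induction on $\a+\b$, with the boundary cases $\a=0$ or $\b=0$ disposed of by Theorem \ref{thm:Swin}, every one of your three exchanges — including the merge branch, and your choice of replying to A\tsub{1} with M rather than S\tsub{2} — lands at $(3(\a-1),3\b)$ or $(3\a,3(\b-1))$, which is directly covered by the hypothesis, so no iterated chain argument is needed. Your legality checks at $\a=1$, $\b=1$ and the observation that A\tsub{2} and S\tsub{3} are unavailable (the $F_3$ column is the black hole and empty) make the case analysis exhaustive, so the argument is complete; the trade-off is only that you quote Theorem \ref{thm:Swin} for a two-parameter family of base cases rather than a single base row, in exchange for a cleaner inductive step.
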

\begin{proof}
    We proceed by induction on $\b$. First, with the base case of $\b=0$, $(3\a,0)$ is a $P$ position by Theorem \ref{thm:Swin}. Then, let our inductive hypothesis be that  $(3\a,3\b)$ is a $P$ position for all $\a$ and some fixed $\b$. Then, consider the following game tree on $(3\a,3(\b+1))$.
    \begin{center}
    \begin{tikzpicture}
            \node (ST1) at (0,0){$\ST{(3\a,3(\b+1))}$};
                \node (NS1i) at (0,-1.5){$\NS{(3\a-1,3\b+2)}$};
                 	\node (ST2i) at (0,-3){$\ST{(3\a,3\b)}$};
                 \node (NS1ii) at (-4,-1.5){$\NS{(3\a-2,3(\b+1)+1)}$};
                  	\node (ST2ii) at (-4,-3){$\ST{(3(\a-1),3(\b+1))}$};
                  \node (NS1iii) at (4,-1.5){$\NS{(3\a+1,3\b+1)}$};
                  	\node (ST2iii) at (4,-3){$\ST{(3\a,3\b)}$};
            \draw  (ST1) -- node[right] {A\tsub{1}}(NS1i);
            \draw  (ST1) -- node[left=3mm] {M}(NS1ii);
            \draw  (ST1) -- node[right=3mm] {S\tsub{2}}(NS1iii);
            \draw  (NS1i) -- node[right] {S\tsub{2}}(ST2i);
           \draw  (NS1ii) -- node[right] {A\tsub{1}}(ST2ii);
           \draw  (NS1iii) -- node[right] {A\tsub{1}}(ST2iii);
       \end{tikzpicture}
    \end{center}\pvs
    In the right and center columns, it is possible to reduce to a game of the form $(3\a,3\b)$, thereby which is a $P$ position by the inductive hypothesis. The left column is not immediately a $P$ position by induction but the game options are the same as before, so it can be reduced to a $P$ position anytime the next player adds or splits. If the next player always chooses to merge, then the game eventually reduces to  $(0,3(\b+1))$ which is a $P$ position by Theorem \ref{thm:Swin}. Thus, $(a,b)$ is a $P$ position for the $F_3$ Black Hole Zeckendorf game if $a \equiv b \equiv 0 \pmod 3$.
\end{proof}\pvs
\begin{thm}\label{thm:winequiv10}
    Let $(a,b)$ be a setup for an $F_3$ Black Hole Zeckendorf Game. If $a \equiv 1 \pmod 3$ and $b \equiv 0 \pmod 3$, or vice versa, $(a,b)$ is a $P$ position.
\end{thm}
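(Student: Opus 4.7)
My plan is to handle the two sub-claims $(3\a+1, 3\b)$ and $(3\a, 3\b+1)$ by symmetric two-level inductions, in the same spirit as the proof of Theorem \ref{thm:winequiv0}. For the first sub-claim I would induct on $\b$ with $\a$ arbitrary, and for the second on $\a$ with $\b$ arbitrary. In each case the outer base collapses to a single-column board already classified by Theorem \ref{thm:Swin}, while the nested base is supplied by Corollary \ref{cor:NSwin}, so the whole argument is reduced to analysing one game tree per sub-claim.

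Taking the first sub-claim as the model, the outer base $\b=0$ yields $(3\a+1,0)$, a $P$ position by Theorem \ref{thm:Swin}. Assuming $(3\a+1,3\b)$ is a $P$ position for every $\a$, I show $(3\a+1,3(\b+1))$ is $P$ for every $\a$ by a nested induction on $\a$, with nested base $\a=0$ giving $(1,3(\b+1))$, a $P$ position by Corollary \ref{cor:NSwin}. The nested inductive step is carried out via a single game tree from $(3(\a+1)+1,\,3(\b+1))$: the $A_1$ branch lands on $(3(\a+1),\,3\b+2)$, where Blue's reply $S_2$ reaches $(3(\a+1)+1,\,3\b)$, a $P$ position by the outer inductive hypothesis; the $S_2$ branch lands on $(3(\a+1)+2,\,3\b+1)$, where Blue's reply $A_1$ reaches the same $P$ position; and the M branch lands on $(3\a+2,\,3(\b+1)+1)$, where Blue's reply $A_1$ reaches $(3\a+1,\,3(\b+1))$, a $P$ position by the nested inductive hypothesis on $\a$.

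The second sub-claim is handled by an analogous game tree from $(3(\a+1),\,3\b+1)$, with outer base $\a=0$ giving $(0,3\b+1)$ ($P$ by Theorem \ref{thm:Swin}) and nested base $\b=0$ giving $(3(\a+1),1)$ ($P$ by Corollary \ref{cor:NSwin}). The M and $A_1$ branches land on $(3\a+1,\,3\b+2)$ and $(3\a+2,\,3\b)$ respectively; from each, Blue can return to $(3\a,\,3\b+1)$, a $P$ position by the outer inductive hypothesis on $\a$, by playing $A_1$ and M respectively. The $S_2$ branch, available only when $\b\geq 1$, lands on $(3(\a+1)+1,\,3(\b-1)+2)$, from which Blue plays $A_1$ to reach $(3(\a+1),\,3(\b-1)+1)$, a $P$ position by the nested inductive hypothesis on $\b$.

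The main obstacle is the M branch of the first sub-claim, and symmetrically the $S_2$ branch of the second: Blue's natural reply lands on a position of the very form the theorem is claiming to be $P$, but with only one of the two parameters reduced, so the single outer inductive hypothesis does not directly apply. This is exactly what forces the two-level induction, and it is the analogue of the iterated-merge situation in Theorem \ref{thm:winequiv0}; splitting off the cross-parameter branch into a nested induction lets the other two branches collapse cleanly via the outer inductive hypothesis, whereas without this structure the recursion on the problem branch would loop indefinitely rather than terminate at the base case.
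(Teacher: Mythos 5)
Your proposal is correct and is in substance the paper's own argument: the same split into the two sub-claims $(3\a+1,3\b)$ and $(3\a,3\b+1)$, the same one-move-reply game trees, and the same appeals to Theorem \ref{thm:Swin} and Corollary \ref{cor:NSwin} for the base positions. The only difference is bookkeeping: the paper runs a single induction per sub-claim (on $\a$ for the first, on $\b$ for the second) and disposes of the stubborn cross-parameter branch with an informal ``the options repeat, so keep replying until the column empties'' descent to Theorem \ref{thm:Swin}, whereas you formalize exactly that descent as one level of a two-level induction (with the roles of $\a$ and $\b$ swapped and Blue's replies adjusted accordingly) --- a slightly cleaner but equivalent organization.
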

\begin{proof}
    We proceed by induction, first considering the game  $(3\a+1,3\b)$. First, fixing $\a=0$, we have that $(1,3\b)$ is a $P$ position by Corollary \ref{cor:NSwin}. Then, for our inductive hypothesis, suppose any setup of the form $(3\a+1,3\b)$ is a $P$ position for fixed $\a$ and all $\b$. Then, consider the following game tree on$(3(\a+1)+1,3\b)$.\pvs
     \begin{center}
         \begin{tikzpicture}
            \node (ST1) at (0,0){$\ST{(3(\a+1)+1,3\b)}$};
                \node (NS1ii) at (-4,-1.5){$\NS{(3\a+2,3\b+1)}$};
                  	\node (ST2ii) at (-4,-3){$\ST{(3\a+1,3\b)}$};
                \node (NS1i) at (0,-1.5){$\NS{(3(\a+1),3\b-1)}$};
                 	\node (ST2i) at (0,-3){$\ST{(3\a+1,3\b)}$};
                  \node (NS1iii) at (4,-1.5){$\NS{(3(\a+1)+2,3(\b-1)+1)}$};
                  	\node (ST2iii) at (4,-3){$\ST{(3(\a+1)+1,3(\b-1))}$};
            \draw  (ST1) -- node[right] {A\tsub{1}}(NS1i);
            \draw  (ST1) -- node[left=3mm] {M}(NS1ii);
            \draw  (ST1) -- node[right=3mm] {S\tsub{2}}(NS1iii);
            \draw  (NS1i) -- node[right] {M}(ST2i);
           \draw  (NS1ii) -- node[right] {A\tsub{1}}(ST2ii);
           \draw  (NS1iii) -- node[right] {A\tsub{1}}(ST2iii);
       \end{tikzpicture}
    \end{center}\pvs
    In the left and center columns, it is possible to reduce to a game of the form $(3\a+1,3\b)$, which is a $P$ position by the inductive hypothesis. The right column is not immediately a $P$ position by inductive hypothesis, but the game options are the same as before so the will reduce to a $P$ position by inductive hypothesis any time the next player to move merges or adds. If the next player always splits, then the game eventually reduces to $(3(\a+1)+1,0)$ which is a $P$ position by Theorem \ref{thm:Swin}.\pvs
    Similarly, consider the initial setup $(3\a,3\b+1)$. Fixing $\b=0$, $(3\a,1)$ is a $P$ position by Corollary \ref{cor:NSwin}. Then, as our inductive hypothesis, we assume that $(3\a,3\b+1)$ is a $P$ position for some fixed $\b$ and all $\a$. We consider the game on $(3\a,3(\b+1)+1)$.\pvs
     \begin{center}
       \begin{tikzpicture}
            \node (ST1) at (0,0){$\ST{(3\a,3(\b+1)+1)}$};
                \node (NS1i) at (0,-1.5){$\NS{(3\a-1,3(\b+1))}$};
                 	\node (ST2i) at (0,-3){$\ST{(3\a,3\b+1)}$};
                 \node (NS1ii) at (-4,-1.5){$\NS{(3\a-2,3(\b+1)+2)}$};
                  	\node (ST2ii) at (-4,-3){$\ST{(3(\a-1),3(\b+1)+1)}$};
                  \node (NS1iii) at (4,-1.5){$\NS{(3\a+1,3\b+2)}$};
                  	\node (ST2iii) at (4,-3){$\ST{(3\a,3\b+1)}$};
            \draw  (ST1) -- node[right] {A\tsub{1}}(NS1i);
            \draw  (ST1) -- node[left=3mm] {M}(NS1ii);
            \draw  (ST1) -- node[right=3mm] {S\tsub{2}}(NS1iii);
            \draw  (NS1i) -- node[right] {S\tsub{2}}(ST2i);
           \draw  (NS1ii) -- node[right] {A\tsub{1}}(ST2ii);
           \draw  (NS1iii) -- node[right] {A\tsub{1}}(ST2iii);
       \end{tikzpicture}
    \end{center}\pvs
    The center and right columns are $P$ positions by inductive hypothesis. Again, $(3(\a-1),3(\b+1)+1)$ is not immediately a $P$ position by induction. However, it is either a $P$ position by induction once the next player chooses to add or split, or once the game is reduced to $(0,3(\b+1)+1)$, as a result of Theorem \ref{thm:Swin}.\pvs
    Hence, $(a,b)$ is a $P$ position if $a \equiv 1 \pmod 3$ and $b \equiv 0 \pmod 3$ or vice versa.
\end{proof}\pvs
\subsubsection{$N$ Positions}
\begin{thm}\label{thm:SetupNSwins}
   Let $(a,b)$ be a setup for an $F_3$ Black Hole Zeckendorf game. The possibilities listed below are all $N$ positions.
   \begin{enumerate}
       \item $a \equiv b \equiv 1 \pmod 3$,
       \item $a \equiv b \equiv 2 \pmod 3$,
       \item $a \equiv 2 \pmod 3$ and $b \equiv 1 \pmod 3$,
       \item $a\equiv 1 \pmod 3$ and $b\equiv 2 \pmod 3$,
       \item $a \equiv 0 \pmod 3$ and $b \equiv 2\pmod 3$,
       \item $a \equiv 2 \pmod 3$ and $b \equiv 0 \pmod 3$.
   \end{enumerate}
\end{thm}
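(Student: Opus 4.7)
The plan is to prove each of the six cases by exhibiting, for a position $(a,b)$ in that class, an explicit legal move that lands in a position already known to be a $P$ position via Theorem \ref{thm:Swin}, Theorem \ref{thm:winequiv0}, Theorem \ref{thm:winequiv10}, or Corollary \ref{cor:NSwin}. Since the next player can reach some $P$ position, the starting position is $N$ by the standard combinatorial game theory criterion.

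It helps first to record the three legal moves restricted to a two-column board: a merge $M$ sends $(a,b)\mapsto (a-2,b+1)$ (requires $a\geq 2$); an add $A_1$ sends $(a,b)\mapsto (a-1,b-1)$, with the newly created $F_3$-piece falling into the black hole (requires $a,b\geq 1$); and a split $S_2$ sends $(a,b)\mapsto (a+1,b-2)$, again losing one piece to the black hole (requires $b\geq 2$). Each residue hypothesis in a case automatically guarantees the corresponding move is legal: for instance $a\equiv 2\pmod 3$ forces $a\geq 2$, and $b\equiv 2\pmod 3$ forces $b\geq 2$.

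The six cases then reduce to the following choice of move:
\begin{enumerate}
    \item $a\equiv b\equiv 1\pmod 3$: play $A_1$ to reach $(a-1,b-1)\equiv(0,0)\pmod 3$, a $P$ position by Theorem \ref{thm:winequiv0}.
    \item $a\equiv b\equiv 2\pmod 3$: play $M$ to reach $(a-2,b+1)\equiv(0,0)\pmod 3$, a $P$ position by Theorem \ref{thm:winequiv0}.
    \item $a\equiv 2$, $b\equiv 1\pmod 3$: play $A_1$ to reach $(a-1,b-1)\equiv(1,0)\pmod 3$, a $P$ position by Theorem \ref{thm:winequiv10}.
    \item $a\equiv 1$, $b\equiv 2\pmod 3$: play $A_1$ to reach $(a-1,b-1)\equiv(0,1)\pmod 3$, a $P$ position by Theorem \ref{thm:winequiv10}.
    \item $a\equiv 0$, $b\equiv 2\pmod 3$: play $S_2$ to reach $(a+1,b-2)\equiv(1,0)\pmod 3$, a $P$ position by Theorem \ref{thm:winequiv10} (and by Theorem \ref{thm:Swin} in the degenerate subcase $(0,2)\mapsto(1,0)$).
    \item $a\equiv 2$, $b\equiv 0\pmod 3$: play $M$ to reach $(a-2,b+1)\equiv(0,1)\pmod 3$, a $P$ position by Theorem \ref{thm:winequiv10} (and by Theorem \ref{thm:Swin} in the degenerate subcase $(2,0)\mapsto(0,1)$).
\end{enumerate}

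This is fundamentally a bookkeeping proof, so the main (minor) obstacle is just making sure that in each case the selected move is available and that boundary configurations where $a=0$ or $b=0$ after the move are still covered by an earlier result. Both points are handled by the residue conditions themselves, as noted above. Once the cases are laid out in a clean table, the proof is essentially immediate, and collectively these six cases complete the classification of $(a,b)$ modulo $3$ into $P$ and $N$ positions.
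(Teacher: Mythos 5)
Your proposal is correct and matches the paper's proof essentially move for move: the paper handles each of the six residue classes by playing exactly the same single move you choose ($A_1$ in cases 1, 3, 4; $M$ in cases 2, 6; $S_2$ in case 5) and citing Theorems \ref{thm:winequiv0} and \ref{thm:winequiv10} for the resulting $P$ positions. Your added remarks on move legality and the boundary subcases are harmless extra care beyond what the paper records.
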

\begin{proof}
     For each statement, we show that each position leads directly to a position that we showed is $P$ above.\pvs
    First, suppose the board is set as $(3\a+1,3\b+1)$. Then, the next player can add from columns $F_1$ and $F_2$ to place $(3\a,3\b)$ which is a $P$ position as shown in Theorem \ref{thm:winequiv0}.\pvs
    Similarly, suppose the board is set as $(3\a+2,3\b+2)$. Then the next player can merge from $F_1$ to place the board as $(3\a,3(\b+1))$ which is a $P$ position by Theorem \ref{thm:winequiv0} again.\pvs
    Then, suppose the board is set as $(3\a+2,3\b+1)$ or $(3\a+1,3\b+2)$. The next player can add from columns $F_1$ and $F_2$ to place $(3\a+1,3\b)$ and $(3\a,3\b+1)$ respectively, which by Theorem \ref{thm:winequiv10} are both $P$ positions.\pvs
    Then, suppose the board is set as $(3\a,3\b+2)$. The next player can split from column $F_2$, to place the board as $(3\a+1,3\b)$, which is a $P$ position by Theorem \ref{thm:winequiv10}.\pvs
    Lastly, suppose the board is set as $(3\a+2, 3\b)$. The next player can merge from column $F_1$ to place the board as ${(3\a,3\b+1)}$, which is a $P$ position by Theorem \ref{thm:winequiv10}.
\end{proof}\pvs
\subsection{Empty Board Game}
We now continue to the Empty Board $F_3$ Black Hole Zeckendorf game and determine which player has a winning strategy for any given $n \in \Zp$. We find that Player $1$ has a constructive winning strategy for $n\equiv 1,2,3,6,8\pmod 9$ and Player $2$ has a constructive winning strategy for $n \equiv 0,4,5,7 \pmod 9$.
\begin{thm}\label{thm: evensplit}
   Let $(0,0)$ be the beginning board for an Empty Board $F_3$ Black Hole Zeckendorf game with $n \in \Zp$ pieces. Players can force certain game setups as outlined below.
   \begin{enumerate}
       \item For any $n \equiv \pm 3 \pmod 9$, Player 1 can force the game into a setup $(n/3,n/3)$.
       \item For any $n \equiv 0 \pmod 9$, Player 2 can force the game into a setup $(n/3,n/3)$.
       \item For any $n \equiv 1 \pmod 9$, Player 1 can force the game into a setup $((n-1)/3+1,(n-1)/3)$.
       \item For any $n \equiv 4,7 \pmod 9$, Player 2 can force the game into a setup $((n-1)/3+1,(n-1)/3)$.
       \item For any $n \equiv 2 \pmod 9$, Player 1 can force the game into a setup $((n-2)/3,(n-2)/3+1)$.
       \item For any $n \equiv 8 \pmod 9$, Player 1 can force the game into a setup $((n-2)/3+2,(n-2)/3)$.
       \item  For any $n \equiv 5 \pmod 9 $, Player $2$ can force the game into a setup that is either\\
       $((n-2)/3+2,(n-2)/3)$ or $((n-2)/3,(n-2)/3+1)$.
   \end{enumerate}.
\end{thm}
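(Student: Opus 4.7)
The plan is to establish all seven claims via a single device, the \emph{mirror response}, in which a player facing an opponent who just placed a piece of value $v \in \{1,2\}$ replies with a piece of value $3-v$ in the opposite column. The key observation is that each mirror pair adds exactly one piece to column $F_1$ and one to column $F_2$ and reduces the remaining pool by exactly $3$. The mirror is legal whenever there are at least $3$ pieces left in the pool before the opponent acts, so the mirror iteration can be carried out cleanly as long as the pool has size at least $3$, failing only in the ``tail'' where at most $2$ pieces remain.

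The seven claims split along two templates. In Template A, used for claims (1), (3), (5), and (6), Player $1$ opens by placing a single piece of value $v_0 \in \{1,2\}$ and then mirrors Player $2$'s subsequent moves. Writing $n - v_0 = 3q + r$ with $r \in \{0,1,2\}$, after $q$ mirror pairs the pool holds $r$ pieces and the board is $(q+1, q)$ if $v_0 = 1$ or $(q, q+1)$ if $v_0 = 2$; one then analyzes the tail of size $r$. In Template B, used for claims (2), (4), and (7), Player $2$ mirrors Player $1$ from the start; writing $n = 3Q + R$ with $R \in \{0,1,2\}$, after $Q$ mirror pairs the state is $(Q, Q)$ with $R$ pieces left. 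For each individual claim, the residue of $n$ modulo $9$ pins down the value of $r$ or $R$ together with the choice of $v_0$, and the tail behavior then delivers the target state named in the claim.

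The bulk of the argument is then a short bookkeeping step. For claim (1), $v_0 = 2$ gives $r = 1$, and the forced play of $1$ by Player $2$ on the final piece completes the state to $(n/3, n/3)$; for claims (3) and (5), the choice of $v_0$ yields $r = 0$ and the game ends cleanly at the claimed target; for claim (6), $v_0 = 1$ again yields $r = 1$, producing the extra $1$ in column $F_1$. For claims (2) and (4), the residues $R = 0$ and $R = 1$ make the mirror produce the stated setups deterministically. The only case with genuine branching is claim (7), where $n \equiv 5 \pmod{9}$ forces $R = 2$; here Player $1$ has a binary choice on the penultimate move, but both options land in the pair of setups listed in the claim, so Player $2$'s mirror still forces the game into the prescribed set.

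The main obstacle is this tail handling: once the pool drops below $3$ the mirror is no longer guaranteed to be legal, and the opponent may try to deviate. The proof must verify that the chosen $v_0$ and the residue of $n$ modulo $9$ together ensure that the tail produces exactly the target state (or the pair of targets in claim (7)). The reason the cases are partitioned by residue modulo $9$ rather than modulo $3$ is precisely that picking $v_0$ so that $n - v_0 \not\equiv 2 \pmod{3}$ in Template A, together with accepting the branching when $R = 2$ in Template B, is what prevents the opponent from pushing the endgame outside the claimed set.
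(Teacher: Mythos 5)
Your proposal is correct and is essentially the paper's own argument: the ``place opposite'' (Tweedledum--Tweedledee) mirroring with a chosen opening move for Player $1$ in cases (1), (3), (5), (6), mirroring by Player $2$ in cases (2), (4), (7), and the same tail analysis (including the forced final placement when one piece remains and the two-way branch when two remain in the $n \equiv 5 \pmod 9$ case). No substantive differences to note.
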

\begin{proof}
    First, consider $n \equiv \pm 3 \pmod 9$. Player 1 should place their first piece in the $F_2$ column. Then, they should act opposite of Player $2$, until there is one piece left, which  Player $2$ will be forced to place in the $F_1$ column, setting up the board as $(n/3,n/3)$. \pvs
    If $n \equiv 0 \pmod 9$, Player 2 can force the setup $(n/3,n/3)$ by placing opposite Player 1, resulting in Player $2$ placing down $(n/3, n/3)$.\pvs
    Then, consider $n \equiv 1 \pmod 9$. Player $1$ should place their first piece in the $F_1$ column. For every round following, Player $1$ should place in the opposite column of Player $2$. Since $n\equiv 1 \pmod 3$, this means that Player $1$ will place the last piece, setting the board as $((n-1)/3+1,(n-1)/3)$. \pvs
    For $n \equiv 4,7 \pmod 9$, Player $2$ should place opposite of Player $1$. Again, this means that Player $1$ will place the last piece, setting the board as $((n-1)/3+1,(n-1)/3)$.\pvs
     Next, consider $n \equiv 2 \pmod 9$. Player $1$ should place their first piece in the $F_2$ column, and then play opposite Player $2$ until Player $1$ eventually sets the board as $((n-2)/3, (n-2)/3+1)$.\pvs
    Next, consider $n \equiv 8 \pmod 9$. Player $1$ should place their first piece in the $F_1$ column, and then play opposite Player $2$. Then, every time after Player $1$ places down, the number of pieces left to place will be some $p \equiv 1 \pmod 3$, meaning Player $2$ will eventually be forced to set $((n-2)/3+2, (n-2)/3)$.\pvs
    Lastly, for $n \equiv 5 \pmod 9$, Player $2$ should play opposite all of Player $1$'s moves, so that Player $2$ eventually place the board $((n-2)/3,(n-2)/3)$. This forces Player $1$ to either place $((n-2)/3,(n-2)/3+1)$ or $((n-2)/3+1,(n-2)/3)$, the latter allowing Player $2$ to place the setup $((n-2)/3+2,(n-2)/3)$.
\end{proof}\pvs
 Note that these are the only setups that can be forced, as Player $1$ has at most one more than half of the moves, while Player $2$ has at most half of the moves. In the world of combinatorial game theory, this is often referred to as the``Tweedledum and Tweedledee" strategy, where one player always plays opposite the other; it can be used in games like Hackenbush \cite{BCG} and Clobber \cite{ANW}.
\begin{thm}\label{thm: f3wins}
    Let $(0,0)$ be the beginning board for an Empty Board $F_3$ Black Hole Zeckendorf Game with $n$ pieces. Player $1$ has a constructive strategy for winning for any $n \equiv 1,2,3,6,8 \pmod 9$. Player $2$ has a constructive strategy for winning for any $n \equiv 0,4,5,7 \pmod 9$.
\end{thm}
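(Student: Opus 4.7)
The plan is to combine Theorem \ref{thm: evensplit}, which pins down the setup each player can force in the empty-board phase for every residue of $n$ modulo 9, with the $P$/$N$ classification for general setups $(a,b)$ established in Theorems \ref{thm:winequiv0}, \ref{thm:winequiv10}, and \ref{thm:SetupNSwins}. The bridge between the two is the observation that the player who places the final setup piece becomes Player 2 in the decomposition phase, so that player wins exactly when the produced setup is a $P$ position and loses exactly when it is an $N$ position.

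I will walk through the nine residue classes one at a time. For each, Theorem \ref{thm: evensplit} supplies the forced configuration $(a,b)$ together with the identity of the last setup placer; reducing $a$ and $b$ modulo 3 and invoking the appropriate classification theorem determines the winner. For example, $n \equiv 1 \pmod 9$ gives $(n-1)/3 \equiv 0 \pmod 3$, so the Player-1-forced setup $((n-1)/3+1,(n-1)/3)$ reduces to $(1,0) \pmod 3$, a $P$ position by Theorem \ref{thm:winequiv10}, and Player 1 wins. By contrast, $n \equiv 4 \pmod 9$ has $(n-1)/3 \equiv 1 \pmod 3$, so the same-shaped setup reduces to $(2,1) \pmod 3$, an $N$ position by Theorem \ref{thm:SetupNSwins}; since Player 2's opposite strategy forces Player 1 to place last, Player 2 wins. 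The residues $n \equiv 0,2,3,6,7,8 \pmod 9$ are handled identically: reduce the forced $(a,b)$ modulo 3, match against the classification theorems, and read off the winner from the identity of the last placer.

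The residue $n \equiv 5 \pmod 9$ needs slightly more care because Theorem \ref{thm: evensplit} gives Player 2 a choice of two forced setups depending on Player 1's penultimate move. I will verify both branches explicitly: $((n-2)/3+2,(n-2)/3) \equiv (0,1) \pmod 3$ is a $P$ position placed by Player 2, while $((n-2)/3,(n-2)/3+1) \equiv (1,2) \pmod 3$ is an $N$ position placed by Player 1, so Player 2 wins in either branch, as claimed.

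I do not expect a genuine obstacle: all the difficult combinatorial work is already packaged in the earlier theorems, and what remains is essentially bookkeeping. The main pitfall to guard against is conflating \emph{places the setup} with \emph{wins} --- it is the player who places a $P$ position who wins, so one must track both the modular form of $(a,b)$ and which player ends the setup phase in each residue class.
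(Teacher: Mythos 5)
Your proposal is correct and follows essentially the same route as the paper: use Theorem \ref{thm: evensplit} to identify the forced setup and the last placer for each residue of $n$ modulo 9, reduce the setup modulo 3, and classify it via Theorems \ref{thm:winequiv0}, \ref{thm:winequiv10}, and \ref{thm:SetupNSwins}, treating the two branches of $n \equiv 5 \pmod 9$ separately. The sample residues you verified (including both branches for $n\equiv 5$) match the paper's case analysis exactly.
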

\begin{proof}
    We first consider when $n \equiv 3,6\pmod 9$. By Theorem \ref{thm: evensplit}, Player $1$ can force Player $2$ to set the board as $(n/3,n/3)$. Then, $n\equiv 0\pmod 3$ but $n/3 \not \equiv 0 \pmod 3$. Since $a,b \not \equiv 0 \pmod 3$ this is an $N$ position by Theorem \ref{thm:SetupNSwins}, so Player $1$ has a constructive solution.\pvs
    Then, consider $n\equiv 0 \pmod 9$, where Player $2$ can force the game so that they place the setup $(n/3,n/3)$ by Theorem \ref{thm: evensplit}. Since $n\equiv 0 \pmod 9$ implies $ n/3 \equiv 0 \pmod 3$, this is a $P$ position by Theorem \ref{thm:winequiv0}, so Player $2$ has a constructive winning strategy.\pvs
    Next, we consider when $n \equiv 1 \pmod 9$, meaning Player $1$ sets the board as $((n-1)/3+1,(n-1)/3)$ by Theorem \ref{thm: evensplit}. Since $(n-1)/3+1 \equiv 1 \pmod 3$ and $(n-1)/3 \equiv 0 \pmod 3$ this is a $P$ position by Theorem \ref{thm:winequiv10}, so Player 1 has a constructive strategy for winning. \pvs
    Then, suppose $n \equiv 4,7 \pmod 9$, meaning Player $2$ can force Player $1$ to place the setup $((n-1)/3+1,(n-1)/3)$ by Theorem \ref{thm: evensplit}. For $n \equiv 4 \pmod 9$, $(n-1)/3+1 \equiv 2 \pmod 3$ and $(n-1)/3 \equiv 1 \pmod 3$, so by Theorem \ref{thm:SetupNSwins}, this is an $N$ position so Player 2 has a winning strategy. Similarly, if $n \equiv 7 \pmod 9$, then $(n-1)/3+1 \equiv 0 \pmod 3$ and $(n-1)/3 \equiv 2 \pmod 3$ so Player 2 has a winning strategy by Theorem \ref{thm:SetupNSwins}.\pvs
     Then, we consider when $n \equiv 2 \pmod 9$, meaning that Player 1 is able to place the setup $((n-2)/3,(n-2)/3+1)$ by Theorem \ref{thm: evensplit}. Since $(n-2)/3+1 \equiv 1 \pmod 3$ and $(n-2)/3 \equiv 0 \pmod 3$, this is a $P$ position by Theorem $\ref{thm:winequiv10}$ so Player 1 wins. \pvs
     If $n \equiv 8 \pmod 9$, then Player $1$ can force Player $2$ to place the setup $((n-2)/3+2,(n-2)/3)$ by Theorem \ref{thm: evensplit}. Since $(n-2)/3+2 \equiv 1 \pmod 3$ and $(n-2)/3 \equiv 2 \pmod 3$, then Player 1 wins by Theorem \ref{thm:SetupNSwins}. \pvs
    If $n \equiv 5 \pmod 9$, then by by Theorem \ref{thm: evensplit}, Player $2$ can force the game so that either they place the setup $((n-2)/3+2,(n-2)/3)$ or Player $1$ places the setup $((n-2)/3,(n-2)/3+1)$. We note that $(n-2)/3 \equiv 1 \pmod 3$, $(n-2)/3+1 \equiv 2 \pmod 3$ and $(n-2)/3+2 \equiv 0 \pmod 3$. It follows that Player $2$ wins the setup $((n-2)/3,(n-2)/3+1)$ by Theorem \ref{thm:SetupNSwins}, since it is an $N$ position. Player $2$ also wins the setup $((n-2)/3+2,(n-2)/3)$ as by Theorem \ref{thm:winequiv10} it is a $P$ position.
\end{proof} \pvs
This concludes our analysis of the $F_3$ Black Hole Zeckendorf game, both with and without the Empty Board phase of the game.
\section{Game with a Black Hole on \texorpdfstring{$F_4$}{F4}}
We now expand to the game with a black hole on $F_4=5$ with the setup $(a,b,c)$. This game becomes more interesting for a variety of reasons. There are more possible moves for each player and less symmetry, leading to a solution that is constructive but more intricate. Here, the winner is not solely based on the equivalence classes of $a$ and $c$ but also their value in relation to each other. \pvs
\subsection{Single Column Winning Board Setups}
We first consider winning strategies when all pieces are in either in column $F_1$ or column $F_3$. We do not consider when all are in column $F_2$ for two reasons. First and foremost, we define the Empty Board game so that players are only able to place in the outermost columns, as this greatly simplifies players abilities to force certain setups. Second, a significant strategy in the solution is forcing all pieces to the outer columns, to prevent the other player from using the ``Add'' move, thereby limiting their options. This strategy clearly fails when all pieces are in the $F_2$ column, making analysis of the game challenging. A complete analysis of the $F_4$ Black Hole Zeckendorf game on $(a,b,c)$ such that $b \neq 0$ would prove interesting in future work, as would analyzing the Empty Board game without restrictions on placing in the $F_2$ column.\pvs
\begin{thm}\label{thrm: allina} Let $(a,0,0)$ be a game state for an $F_4$ Black Hole Zeckendorf game. For any $a\neq 2$, $(a,0,0)$ is a $P$ position.
\end{thm}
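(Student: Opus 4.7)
The plan is to run a strong induction on $a$, coupled with a companion claim about positions of the form $(c, 0, 1)$. The key structural observation is that the state $(a, 0, 0)$ admits only one legal move, namely the merge $M$ to $(a-2, 1, 0)$, provided $a \geq 2$. Consequently, showing that $(a, 0, 0)$ is a $P$ position amounts to exhibiting, from the resulting state $(a-2, 1, 0)$, a move to some $P$ position. The base cases $a \in \{0, 1\}$ are terminal and hence trivially $P$, while $a = 3$ (and optionally $a = 4$) can be handled by direct enumeration of the short game tree.

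For the inductive step with $a \geq 5$, I would respond to the opponent's merge with $A_1$, taking $(a-2, 1, 0)$ to $(a-3, 0, 1)$. To close the argument, I would prove as a companion lemma that $(c, 0, 1)$ is a $P$ position for every $c \geq 0$. The proof of the companion mirrors the structure of the main claim: for $c \geq 2$ the only move from $(c, 0, 1)$ is $M$ to $(c-2, 1, 1)$, and the winning response is $A_2$ to $(c-2, 0, 0)$, which is $P$ by the main induction provided $c - 2 \neq 2$.

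The single exceptional case is $c = 4$ (equivalently $a = 7$ in the main induction), where the $A_2$ response would yield the excluded $N$-position $(2, 0, 0)$. I would handle this by hand: from $(2, 1, 1)$, play $A_1$ instead, landing at $(1, 0, 2)$. The only continuation from $(1, 0, 2)$ is $S_3$ back to $(2, 0, 0)$; since $(2, 0, 0)$ is $N$ (verified directly, as its merge $M$ leads to the terminal $(0, 1, 0)$), the position $(1, 0, 2)$ is itself $P$, as needed. This repairs the gap in the strategy without disturbing the main pattern.

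The principal obstacle is the interlocked nature of the two inductions: the main claim at parameter $a$ invokes the companion at $c = a-3$, while the companion at $c$ invokes the main claim at $a = c-2$. Treating them as a joint strong induction, and verifying that every recursive appeal strictly decreases an induction parameter, is the bookkeeping step that requires care. Aside from this, all remaining work consists of enumerating the five move types at each intermediate configuration and checking that no alternative opponent response escapes the strategy.
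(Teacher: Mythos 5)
Your proposal is correct and is essentially the paper's argument: the same move chain $(a,0,0)\xrightarrow{M}(a-2,1,0)\xrightarrow{A_1}(a-3,0,1)\xrightarrow{M}(a-5,1,1)\xrightarrow{A_2}(a-5,0,0)$ reduces $a$ by $5$, and your exceptional line through $(2,1,1)\to(1,0,2)\to(2,0,0)\to(0,1,0)$ is exactly the paper's base-case tree for $a=7$. The only difference is organizational: you package the reduction as a mutual strong induction with a companion claim that $(c,0,1)$ is a $P$ position (which the paper records separately as part of Corollary 5.2), whereas the paper runs a step-5 induction from the explicit base cases $a=1,3,4,5,7$ verified in Appendix A.
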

\begin{proof}
    We proceed by induction on $a$. See in Appendix \ref{bcaseallina} that this is a $P$ position for all base cases
    $a=1,3,4,5,7$. Then, for our inductive hypothesis suppose the starting position $(a,0,0)$ wins and the consider following game tree.
   \begin{center}
        \begin{tikzpicture}
            \node (ST1) at (0,0){$\ST{(a+5,0,0)}$};
                \node (NS1) at (0,-1.5){$\NS{(a+3,1,0)}$};
                    \node (ST2) at (0,-3){$\ST{(a+2,0,1)}$};
                        \node (NS2) at (0,-4.5){$\NS{(a,1,1)}$};
                            \node (ST3i) at (0,-6){$\ST{(a,0,0)}$};
            \draw  (ST1) -- node[right] {M}(NS1);
            \draw  (NS1) -- node[right] {A\tsub{1}}(ST2);
            \draw  (ST2) -- node[right] {M}(NS2);
            \draw   (NS2) -- node[right] {A\tsub{2}}(ST3i);
        \end{tikzpicture}
    \end{center}\pvs
    Since any $(a,0,0)$ wins by inductive hypothesis, and as shown above, it is always possible to reduce $a$ modulo 5 until it reaches one of the base cases, then $(a,0,0)$ is a $P$ position for all $a\neq 2$.
\end{proof}\pvs
 \begin{corollary}\label{coralla} Let $(a,b,c)$ be a game state for an $F_4$ Black Hole Zeckendorf game. The position $(a,0,1)$ is a $P$ position for all $a\in \Znn$. The position $(a,0,2)$ is an $N$ position for all $a\neq 1 \in \Znn$. The positions $(a,0,3)$, $(a,1,1)$ and $(a,1,2)$ are $N$ positions for all $a\in \Znn$.
\end{corollary}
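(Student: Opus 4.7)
The corollary bundles five families of positions, and the underlying observation is that each classification reduces to examining a single move out of, or into, the family $(a,0,0)$ whose status is already pinned down by Theorem \ref{thrm: allina}. My plan is to chain the five statements together so that each one reduces essentially to one transition, while carefully navigating around the lone exception $a=2$ of that theorem.

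The natural order of attack is to dispose of $(a,1,1)$ first, then use it to establish $(a,0,1)$, and finally to read off the remaining three $N$-position claims. For $(a,1,1)$ I would play A\tsub{2} to reach $(a,0,0)$; by Theorem \ref{thrm: allina} this is a $P$ position as long as $a\neq 2$, which handles every case except $(2,1,1)$. For $(a,0,1)$ the point is that, for $a=0,1$ the position is terminal and hence $P$, while for $a\geq 2$ the only legal move is M, landing on $(a-2,1,1)$, which is $N$ by the previous step; a quick scan rules out A\tsub{1}, A\tsub{2}, S\tsub{2}, S\tsub{3} from $(a,0,1)$ because each requires $b\geq 1$, $b\geq 2$, or $c\geq 2$.

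Once $(a,0,1)$ is known to be $P$, the remaining three $N$-claims each follow from inspecting a single move. For $(a,0,2)$ with $a\neq 1$, the move S\tsub{3} produces $(a+1,0,0)$ with $a+1\neq 2$, hence $P$ by Theorem \ref{thrm: allina}. For $(a,0,3)$, the move S\tsub{3} produces $(a+1,0,1)$, a $P$ position by the $(a,0,1)$ result. For $(a,1,2)$, the move A\tsub{2} produces $(a,0,1)$, again $P$ by the same result.

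The main obstacle is the edge case $(2,1,1)$: the natural move A\tsub{2} lands on $(2,0,0)$, which is exactly the exceptional $N$ position of Theorem \ref{thrm: allina} and so gives no information. I will resolve this by playing A\tsub{1} instead, which carries $(2,1,1)$ to $(1,0,2)$, and then verifying directly that $(1,0,2)$ is $P$: its only legal move is S\tsub{3} to $(2,0,0)$, and $(2,0,0)$ is itself $N$ because merging reaches the terminal (hence $P$) position $(0,1,0)$. With this one edge case handled, the chain of implications closes up cleanly and no further induction is needed beyond what Theorem \ref{thrm: allina} already provides.
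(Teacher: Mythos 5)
Your proposal is correct and follows essentially the same route as the paper: each family is settled by a single move into $(a,0,0)$ or $(a,0,1)$ using Theorem \ref{thrm: allina} (S\tsub{3} from $(a,0,2)$ and $(a,0,3)$, A\tsub{2} from $(a,1,1)$ and $(a,1,2)$, and the forced M from $(a,0,1)$), with the exceptional position $(2,1,1)$ resolved by the same line A\tsub{1} to $(1,0,2)$, forced S\tsub{3} to $(2,0,0)$, then M to $(0,1,0)$ that the paper takes from its $n=7$ base case tree. If anything, your explicit handling of the $a=2$ case of $(a,1,1)$ is slightly more careful than the paper's wording, which covers that case only implicitly through the base case appendix.
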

\begin{proof}
    In the game tree considered in Theorem \ref{thrm: allina} and the base cases in Appendix \ref{bcaseallina}, $(a,0,1)$ is a $P$ position since it reduces to $(a,0,0)$. Note in the case $(4,0,1)$ the tree above reduces to $(2,0,0)$ which is an $N$ position but the base case tree for $n=7$ as included in Appendix \ref{bcaseallina} reduces to $(0,1,0)$ which is a $P$ position. \pvs
    From $(a,0,2)$ and $(a,0,3)$, the next player can split in the $F_3$ column to place $(a+1,0,0)$ and $(a+1,0,1)$ respectively, which are $P$ positions as shown above. The only exception to this is when the board is set as $(1,0,2)$, given that $(2,0,0)$ is an $N$ position.  \pvs
    From $(a,1,1)$ and $(a,1,2)$ the next player can add from columns $F_2$ and $F_3$ to place $(a,0,0)$ and $(a,0,1)$ respectively, which are $P$ positions as shown above.
\end{proof}\pvs
 \begin{thm}\label{thrm: allinc} Let $(0,0,c)$ be an game state for an $F_3$ Black Hole Zeckendorf game. For any $c\neq 0,1,5 \in \Znn$, $(0,0,c)$ is an $N$ position. 
 \end{thm}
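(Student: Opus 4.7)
The plan is to prove this by strong induction on $c$, exploiting the fact that from $(0,0,c)$ with $c \geq 2$ there is only one legal move — the split in $F_3$, which produces $(1,0,c-2)$ — since $a = b = 0$ forbids merges, both additions, and $F_2$-splits. When $c \geq 4$ the successor $(1,0,c-2)$ similarly has only the $F_3$-split, yielding $(2,0,c-4)$, so after two forced plies the game returns to the original next player at $(2,0,c-4)$; hence $(0,0,c)$ is $N$ precisely when $(2,0,c-4)$ is $N$. The base cases $c = 2, 3, 4$ follow immediately: $(1,0,0)$ and $(1,0,1)$ are $P$ by Theorem \ref{thrm: allina} and Corollary \ref{coralla}, and $(1,0,2)$ is $P$ because its only successor $(2,0,0)$ is $N$ by Theorem \ref{thrm: allina}.

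For the inductive step $c \geq 6$, I will exhibit a winning move from $(2,0,c-4)$ in three regimes. First, for $c \in \{6,7\}$, splitting $F_3$ reaches $(3,0,0)$ or $(3,0,1)$, both $P$ by Theorem \ref{thrm: allina} and Corollary \ref{coralla}. Second, for the isolated case $c = 10$, splitting reaches $(3,0,4)$, which I verify is $P$ by checking both its successors are $N$ — $(4,0,2)$ via Corollary \ref{coralla}, and $(1,1,4)$ via its $F_1{+}F_2$-add to $(0,0,5)$, the exceptional $P$ position. Third, for all remaining $c \geq 8$ (equivalently $c \neq 10$), merging reaches $(0,1,c-4)$, and I will invoke an auxiliary lemma that $(0,1,d)$ is a $P$ position for every $d \geq 4$ with $d \neq 6$.

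The auxiliary lemma will be proved by showing both of its successors are $N$: the add gives $(0,0,d-1)$, $N$ by the main induction (since $d-1 \geq 3$ and $d - 1 \neq 5$); the $F_3$-split gives $(1,1,d-2)$, for which I will establish in parallel that $(1,1,d')$ is $N$ for every $d' \geq 3$ — for $d' = 4$ via the add-move to the $P$ position $(0,0,5)$, and for $d' \neq 4$ via the $F_2{+}F_3$-add to $(1,0,d'-1)$, itself $P$ by a subsidiary induction on $(1,0,\cdot)$ that again uses the forced reduction $(1,0,e) \to (2,0,e-2)$. The main obstacle will be the interlocking bookkeeping among four mutually recursive claims — about $(0,0,\cdot)$, $(1,0,\cdot)$, $(0,1,\cdot)$, and $(1,1,\cdot)$ — together with the sporadic exception $c = 10$, where the uniform merge strategy breaks down because $(0,1,6) \to (0,0,5)$ makes $(0,1,6)$ itself $N$; once these are organized into one strong induction on the third coordinate, the case analysis above closes the proof.
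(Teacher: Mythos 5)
Your proposal is correct, and at the level of actual moves it coincides with the paper's strategy; the difference is only in how the induction is packaged. The paper isolates a single sufficient lemma, namely that $(1,0,c)$ is a $P$ position for all $c \neq 3$, and proves it by an induction in steps of five along the line $(1,0,c+5)\to(2,0,c+3)\to(0,1,c+3)\to\{(0,0,c+2),(1,1,c+1)\}\to(1,0,c)$, with the sporadic cases $(1,0,2)$, $(1,0,4)$, $(1,0,8)$ checked as base cases in Appendix \ref{bcaseallinc}. You instead run one strong induction on the third coordinate over the four interlocking families $(0,0,\cdot)$, $(1,0,\cdot)$, $(0,1,\cdot)$, $(1,1,\cdot)$, pivoting through the forced position $(2,0,c-4)$; your merge-to-$(0,1,c-4)$ regime, with the responses $S_3$ or $A_2$ returning to $(1,0,c-7)$, is exactly the paper's tree shifted in index, and your exceptional case $c=10$ via $(3,0,4)$ reproduces the paper's appendix tree for $(1,0,8)$. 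What your organization buys is that well-foundedness (every dependency strictly lowers the third coordinate) and the role of each sporadic position ($(2,0,0)$, $(1,0,3)$, $(0,0,5)$, $(0,1,6)$) are made explicit; what the paper's buys is brevity, since only one family carries the induction and the exceptions are quarantined in an appendix. Two small bookkeeping items to fix in a write-up, neither a real gap: your parallel claim should read that $(1,1,d')$ is an $N$ position for $d' \geq 2$ rather than $d' \geq 3$, since the case $c=8$ needs $(1,1,2)$, which your $A_2$-move to the trivially $P$ position $(1,0,1)$ already handles; and the assertion that $(0,0,5)$ is a $P$ position should be verified once, e.g.\ via the forced chain $(0,0,5)\to(1,0,3)\to(2,0,1)\to(0,1,1)\to(0,0,0)$, before it is used in the $c=10$ analysis and in declaring $(0,1,6)$ an $N$ position.
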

 \begin{proof}
     It is sufficient to prove that $(1,0,c)$ is a $P$ position, since the next player to move from $(0,0,c)$ can split in the third column to place $(1,0,c)$. We proceed by induction on $c$. See in Appendix \ref{bcaseallinc} that this is a $P$ position for all base cases ${c=0,1,2,4,8}$ while it is an $N$ position for $c=3$. Then, consider the following game tree from $(1,0,c)$
\begin{center}
        \begin{tikzpicture}
                \node (NS1) at (0,-1.5){$\ST{(1,0,c+5)}$};
                    \node (ST2) at (0,-3){$\NS{(2,0,c+3)}$};
                        \node (NS2) at (0,-4.5){$\ST{(0,1,c+3)}$};
                            \node (ST3i) at (-2,-6){$\NS{(0,0,c+2)}$};
                            \node (ST3ii) at (2,-6){$\NS{(1,1,c+1)}$};
                                \node (NS3i) at (0,-7.5){$\ST{(1,0,c)}$};

            \draw  (NS1) -- node[right] {S\tsub{3}}(ST2);
            \draw  (ST2) -- node[right] {M}(NS2);
            \draw   (NS2) -- node[left=3mm] {A\tsub{2}}(ST3i);
            \draw   (NS2) -- node[right=3mm] {S\tsub{3}}(ST3ii);
            \draw   (ST3i) -- node[left=3mm] {S\tsub{3}}(NS3i);
            \draw   (ST3ii) -- node[right=3mm] {A\tsub{2}}(NS3i);
        \end{tikzpicture}
    \end{center}\pvs
    Since $(1,0,c)$ wins by inductive hypothesis and as shown above it is always possible to reduce modulo 5 until the game reaches a base case, then $(1,0,c)$ is a $P$ position for all $c \neq 3$, which implies $(0,0,c)$ is an $N$ position for all $c\neq 0,1,5$.
 \end{proof}\pvs
 \begin{corollary}\label{corallc} Let $(a,b,c)$ be a game state for an $F_4$ Black Hole Zeckendorf game. The position $(1,0,c)$ is a $P$ position for all $c\neq 3 \in \Znn$ and the position $(0,1,c)$ is a $P$ position for all $c \neq 1,2,6 \in \Znn$. The position $(2,0,c)$ is an $N$ position for all $c \neq 1 \in \Znn$, and the positions $(1,1,c)$ and $(2,1,c)$ are $N$ position  for all $c \in \Znn$.
\end{corollary}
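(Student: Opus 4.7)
The plan is to deduce the five claims of the corollary in a dependency-respecting order, rather than in the order they are stated; each reduces to a one-ply move analysis combined with Theorem \ref{thrm: allinc} and Corollary \ref{coralla}. The assertion that $(1,0,c)$ is a $P$ position for $c \neq 3$ is essentially already contained in the proof of Theorem \ref{thrm: allinc}: the induction there is precisely an induction showing $(1,0,c)$ is $P$, with the single exception $(1,0,3)$ pinned down in the base cases of Appendix \ref{bcaseallinc}. This gives the starting point for everything else.

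Next I would treat $(1,1,c)$ and $(2,1,c)$, because the $(0,1,c)$ analysis will depend on them. From $(1,1,c)$ with $c \geq 1$, the add $F_2 + F_3$ move reaches $(1,0,c-1)$, which is $P$ by the first step unless $c = 4$; for the exceptional values $c = 0$ and $c = 4$, the add $F_1 + F_2$ move instead reaches $(0,0,1)$ and $(0,0,5)$, both $P$ by Theorem \ref{thrm: allinc}. For $(2,1,c)$, the add $F_1 + F_2$ move reaches $(1,0,c+1)$, which is $P$ whenever $c \neq 2$, and the sole exception $c = 2$ is covered by add $F_2 + F_3$ to $(2,0,1)$, which is $P$ by Corollary \ref{coralla}.

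For $(0,1,c)$ with $c \neq 1,2,6$, the only legal moves are add $F_2 + F_3$ to $(0,0,c-1)$ (when $c \geq 1$) and split $F_3$ to $(1,1,c-2)$ (when $c \geq 2$). By Theorem \ref{thrm: allinc} the first child is $N$ exactly when $c \notin \{1,2,6\}$, and by the preceding step the second child is $N$ whenever $c \geq 2$; the boundary case $c = 0$ is $P$ trivially because no move is legal. Every child being $N$ forces $(0,1,c)$ to be $P$.

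The final claim, that $(2,0,c)$ is $N$ for $c \neq 1$, contains the main obstacle. For $c = 0$, merging reaches the terminal $P$ position $(0,1,0)$; for $c \geq 3$ with $c \neq 6$, merging reaches $(0,1,c)$, which is $P$ by the previous paragraph; for $c = 2$, splitting $F_3$ reaches $(3,0,1)$, which is $P$ by Corollary \ref{coralla}. The subtle case, and the one I expect to require real work, is $c = 6$: here merging lands in $(0,1,6)$, which is $N$, so the argument must go through split $F_3$ to $(3,0,5)$, and the whole proof hinges on verifying that $(3,0,5)$ is itself a $P$ position. This I would settle with a short two-line check, since the two children of $(3,0,5)$ are $(1,1,5)$, an $N$ position by the second step, and $(4,0,3)$, an $N$ position because Corollary \ref{coralla} asserts $(a,0,3)$ is $N$ for every $a$. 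With $(3,0,5)$ confirmed $P$, the split $F_3$ move from $(2,0,6)$ lands in $P$ and the last subcase closes.
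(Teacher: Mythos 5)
Your overall route is the same as the paper's: reduce each claim to a one- or two-ply move analysis resting on Theorem \ref{thrm: allina}, Theorem \ref{thrm: allinc} and Corollary \ref{coralla} (your treatment of $(1,0,c)$, $(1,1,c)$, $(2,1,c)$ and $(0,1,c)$ is essentially what the paper does, with the small exceptional values handled the same way). However, there is a concrete error in the one step you yourself flag as the crux. The split move $S_3$ removes \emph{two} pieces from the $F_3$ column (one goes to $F_1$, one into the black hole), so from $(2,0,c)$ it produces $(3,0,c-2)$, not $(3,0,c-1)$. For $c=2$ this is harmless but your citation is wrong: the child is $(3,0,0)$, which is a $P$ position by Theorem \ref{thrm: allina}, not $(3,0,1)$ via Corollary \ref{coralla}. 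For $c=6$ the slip is fatal to the argument as written: $(3,0,5)$ is simply not a child of $(2,0,6)$, so your careful verification that $(3,0,5)$ is $P$ establishes nothing about $(2,0,6)$.

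The gap is repairable along the same lines. The actual split child of $(2,0,6)$ is $(3,0,4)$, whose only moves are $M$ to $(1,1,4)$ and $S_3$ to $(4,0,2)$; the first is $N$ by your own second step (its winning reply is $A_1$ to $(0,0,5)$), and the second is $N$ by Corollary \ref{coralla} since $4\neq 1$. Hence $(3,0,4)$ is $P$ and $(2,0,6)$ is $N$ — this is exactly the tree for $(1,0,8)$ in Appendix \ref{bcaseallinc}, which is how the paper disposes of this case (the paper's proof of the corollary simply points to the tree in Theorem \ref{thrm: allinc}, which gives $(2,0,c)$ $N$ for $c\geq 3$, $c\neq 6$, plus the appendix trees for $c=0,2,6$). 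So your proposal is structurally sound and, for $(2,0,c)$ with $c\geq 3$, $c\neq 6$, your merge-to-$(0,1,c)$ argument is a slightly more self-contained variant of the paper's; but as submitted, the $c=6$ (and, cosmetically, the $c=2$) subcase rests on a miscomputed move and must be corrected as above.
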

\begin{proof}
    Theorem \ref{thrm: allinc} proved that $(1,0,c)$ is a $P$ position for all $c \neq 3 \in \Znn$. Similarly, $(0,1,c)$ is a $P$ position for all $c\neq 1,2,6$ as it reduces to $(1,0,c)$ in the game tree of Theorem \ref{thrm: allinc}. This is true for all $c\neq 1,2,6$ since placing $(0,0,c-1)$ only wins for $c-1=0,1,5$. \pvs
    Next, see that $(2,0,c)$ is an $N$ position in the tree in Theorem \ref{thrm: allinc} and in the bases case trees in Appendix \ref{bcaseallinc}, so it is an $N$ position for $c\neq 1$. \pvs
    Lastly, see that $(1,1,c)$ and $(2,1,c)$ are $N$ position  for all $c \in \Znn$. The next player to move from $(2,1,c)$ can place $(1,0,c+1)$ which is a $P$ position for all $c\neq 2$. When $c=2$, this player can place $(2,0,1)$ which we also showed to be a $P$ position in Corollary \ref{coralla}. Placing $(1,1,c)$ is an $N$ position, given that the next player can place $(1,0,c-1)$ for $c\neq 4$, and $(0,0,5)$ for $c=4$, both of which are $P$ positions. 
\end{proof}\pvs
\subsection{General Winning Setups}
Our proof of general winning setups for a black hole on $F_4$ is more involved, as the minor exceptions in the cases above prevent us from creating a solution solely based on the equivalence classes of $a$ and $c$ for the setup $(a,0,c)$. To motivate this section, we remind the reader of Figure \ref{fig:winners(a,0,c)}, which outlines the winners for the board setup $(a,0,c)$ in an $F_4$ Black Hole Zeckendorf Game.\pvs
We construct a solution by providing a path to place a $P$ position. However, it is not possible to do this until we can verify that the $P$ positions as outlined in the table are in fact $P$ positions. We do this using a mixture of constructive and non-constructive methods.
\begin{lemma}\label{lemma:nck12}
  \textbf{(Non-constructive).} Let $(a,b,c)$ be a game state for an $F_4$ Black Hole Zeckendorf game. For all $\alpha,\gamma, k_1,k_3 \in \Znn$, such that $1 \leq k_1\leq 2$, and $0 \leq k_3\leq 3$, $(3\alpha+k_1,1,4\gamma+k_3)$ is an $N$ position.
\end{lemma}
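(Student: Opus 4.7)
The plan is to argue by strong induction on $\alpha$, with a contradiction step in the inductive case, which accounts for the ``non-constructive'' label: the argument will show that a winning move exists without saying whether it is $A_1$ or $A_2$. The base case $\alpha=0$ reduces the position to $(1,1,c)$ or $(2,1,c)$, and Corollary \ref{corallc} already places both in $N$ for every $c$, so nothing new is needed there.

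For the inductive step, I would fix $\alpha\geq 1$ and suppose for contradiction that $(3\alpha+k_1,1,4\gamma+k_3)$ is a $P$ position, so that every move from it lands in an $N$ position. The move to focus on is $A_1$, which produces the state $X := (3\alpha+k_1-1,\,0,\,4\gamma+k_3+1)$. Because $X$ has $b=0$, the only moves from $X$ are $M$ and, when $c\geq 2$, $S_3$; the $A_1$, $A_2$, and $S_2$ moves all require a piece in $F_2$. Since $X$ must be $N$, at least one of $M$ and $S_3$ must reach a $P$ position, and the plan is to rule out both.

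The edge case $(\gamma,k_3)=(0,0)$ is immediate: $S_3$ is unavailable at $X=(3\alpha+k_1-1,0,1)$, while Corollary \ref{coralla} says $(a,0,1)$ is $P$ for every $a$, so $X$ itself would be $P$, contradicting $X\in N$. Otherwise both $M$ and $S_3$ are available. Applying $S_3$ to $X$ yields $(3\alpha+k_1,\,0,\,4\gamma+k_3-1)$, which is \emph{exactly} the state obtained by applying $A_2$ directly to the original position; since the original is assumed $P$, this state is $N$, so $S_3$ cannot land in $P$. Applying $M$ to $X$ yields $(3(\alpha-1)+k_1,\,1,\,4\gamma+k_3+1)$, and after rewriting $4\gamma+k_3+1$ in the normal form $4\gamma'+k_3'$ with $k_3'\in\{0,1,2,3\}$ (rolling over to $\gamma'=\gamma+1$ when $k_3=3$), this position has the form covered by the lemma with $\alpha-1<\alpha$, hence is $N$ by the induction hypothesis. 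So both moves out of $X$ land in $N$, forcing $X$ to be $P$ rather than $N$, the desired contradiction.

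The main obstacle is spotting the ``move commutation'' that identifies $S_3\circ A_1$ at the original state with $A_2$ applied alone; this is what couples the two assumed-$N$ destinations into a single contradiction and, incidentally, explains why one cannot name the winning move a priori. Everything else is routine bookkeeping: checking $a\geq 2$ at $X$ so that $M$ is available whenever $\alpha\geq 1$ and $k_1\geq 1$, verifying the $k_3=3$ rollover keeps the $M$-destination in the inductive form, and confirming that the $(\gamma,k_3)=(0,0)$ corner really does collapse to the single Corollary \ref{coralla} position.
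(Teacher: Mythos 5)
Your proof is correct, and it hinges on exactly the same mechanism as the paper's: the commutation $S_3\circ A_1 = A_2$, which, under the hypothesis that the start is a $P$ position, forces the reply to the $A_1$ move to be $M$ and thereby drops $\alpha$ by one. The difference is in how that forcing is exploited. The paper unrolls it for $\alpha$ consecutive rounds (Figure \ref{fig:(3,1,4)}), arriving at $(k_1,1,4\gamma+k_3+\alpha)$, and then derives the contradiction from the $(1,0,c)$ part of Corollary \ref{corallc}; since that part has the exception $c=3$, the paper must separately dispose of the cases $4\gamma+\alpha+k_3\pm1=3$ in Appendix \ref{basecasea1c}. You instead package a single round of the forcing as the inductive step of an induction on $\alpha$, getting the contradiction locally at $X=(3\alpha+k_1-1,0,4\gamma+k_3+1)$ (both of its moves land in $N$ positions, one by the induction hypothesis and one because it coincides with the $A_2$-child of the assumed-$P$ start, so $X$ would be $P$), with the base case $\alpha=0$ handled by the $(1,1,c)$ and $(2,1,c)$ parts of Corollary \ref{corallc}, which hold for all $c$ without exception, and the corner $(\gamma,k_3)=(0,0)$ handled by Corollary \ref{coralla}. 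What your route buys is shorter bookkeeping: the exceptional base cases of Appendix \ref{basecasea1c} never arise, and the rollover of $k_3$ and availability of $M$ are the only checks needed; what the paper's version buys is an explicit description of the full forced line of play, which it then reuses informally when discussing later trees. Both arguments are equally non-constructive, since neither identifies which of $A_1$, $A_2$ is the winning first move.
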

\begin{proof}
    We proceed by a non-constructive proof. Recall that we assume that Red moves first from the position $(3\alpha+k_1,1,4\gamma+k_3)$. For contradiction's sake, suppose $(3\alpha+k_1,1,4\gamma+k_3)$ is a $P$ position when $k_1=1,2$. Then, consider the game tree in Figure \ref{fig:(3,1,4)}, where $r$ is the number of rounds in which both players move once. We note that Red has other possible moves, but we only consider moves relevant to the proof.  \pvs
    \begin{figure}
  \begin{center}
  \begin{tikzpicture}
 \node (ST1) at (0,0){$\ST{(3\alpha+k_1,1,4\gamma+k_3)}$};
 	\node (NS1i) at (-3,-1.5){$\NS{(3\alpha+k_1-1,0,4\gamma+k_3+1)}$};
		\node (ST2i) at (-3,-3){$\ST{(3\alpha+k_1-3,1,4\gamma+k_3+1)}$};
			\node (NS2i) at (-3,-4.5){$\NS{(3\alpha+k_1-4,0,4\gamma+k_3+2)}$};
				\node (ST3i) at (-3,-6){$\ST{(3\alpha+k_1-6,1,4\gamma+k_3+2)}$};
						\node (ST4i) at (-3,-7.5){. . . };
							\node (ST5i) at (-3,-9){$\ST{(3\alpha+k_1-3r,1,4\gamma+k_3+r)}$};
				\node (ST3ii) at (3,-6){$\ST{(3\alpha+k_1-3,0,4\gamma+k_3)}$};
			\node (NS2ii) at (3,-4.5){$\NS{(3\alpha+k_1-3,0,4\gamma+k_3)}$};
		 \node (ST2ii) at (3,-3){$\ST{(3\alpha+k_1,0,4\gamma+k_3-1)}$};
	 \node (NS1ii) at (3,-1.5){$\NS{(3\alpha+k_1,0,4\gamma+k_3-1)}$};
            \draw  (ST1) -- node[left=3mm] {A\tsub{1}}(NS1i);
            \draw  (ST1) -- node[right=3mm] {A\tsub{2}}(NS1ii);
            \draw  (NS1i) -- node[right] {M}(ST2i);
             \draw  (NS1i) -- node[right=6mm] {S\tsub{3}}(ST2ii);
            \draw  (ST2i) -- node[right] {A\tsub{1}}(NS2i);
             \draw  (ST2i) -- node[right=6mm] {A\tsub{2}}(NS2ii);
             \draw   (NS2i) -- node[right] {M}(ST3i);
             \draw   (NS2i) -- node[right=6mm] {S\tsub{3}}(ST3ii);
             \draw  (ST3i) -- (ST4i);
              \draw  (ST4i) -- (ST5i);
        \end{tikzpicture}
        \caption{Game Tree for the Game State $(3\a+k_1,1,4\g+k_3)$.}
    \label{fig:(3,1,4)}
  \end{center}\pvs
  \end{figure}
By assumption, $(3\a+k_1,1,4\g+k_3)$ is a $P$ position regardless of what the other player places. Suppose Red places $(3\alpha+k_1-1,0,4\gamma+1)$ for their first move. Then, Blue can place either $(3\alpha+k_1-3,1,4\gamma+1)$ or $(3\alpha+k_1,0,4\gamma-1)$. But as shown in the tree, Red had the opportunity to place $(3\alpha+k_1,0,4\gamma-1)$ in the round before; so by assumption, placing it is a losing move. It follows that in order to win, Blue must place $(3\alpha+k_1-3,1,4\gamma+1)$. As shown in the tree, Red has the same options as before, so if they place $(3\alpha+k_1-4,0,4\gamma+k_3+2)$, then by assumption, Blue must place $(3\alpha+k_1-6,1,4\gamma+k_3+2)$. \pvs
Then, the game eventually reduces down to Blue placing $(3\alpha+k_1-3r,1,4\gamma+k_3+r)$ after $r$ rounds of each player moving once. After the $\alpha$\textsuperscript{th} round, Blue will place $(k_1,1,4\gamma+k_3+\alpha)$. If $k_1=1$, then Red can add from columns $F_2$ and $F_3$ to place $(1,0,4\gamma+\alpha+k_3-1)$ which is a $P$ position by Corollary \ref{corallc} for all $4\gamma+\alpha+k_3-1\neq 3$. If $k_1=2$, then Red can add from columns $F_1 $ and $ F_2$ to place $(1,0,4\gamma+\alpha+k_3+1)$ which wins by Corollary \ref{corallc} for all $4\gamma+\alpha+k_3+1\neq 3$. Additionally, we show in Appendix \ref{basecasea1c}, that Red also has a winning strategy for the cases when $4\gamma+\alpha+k_3 \pm 1 =3$. Since Red moves first, this is a contradiction to the assumption that for $k_1=1,2$, $(3\alpha+k_1,1,4\gamma+k_3)$ is a $P$ position, thus it is instead an $N$ position.
\end{proof}\pvs
\begin{rek}
    For the sake of conserving space and avoiding repetition, from this point forward we assume both players are seeking optimal strategies. Then, when $k_1=1,2$, we omit any move of the form $(3\alpha+k_1,1,4\gamma+k_3)$, as it immediately loses.
\end{rek}\pvs
From here, we consider when $k_1=0$, and show that $(3\alpha,1,4\gamma+1)$ is an $N$ position for $\alpha\geq \gamma$ and $(3\alpha,1,4\gamma)$ is an $N$ position for $\alpha \geq \gamma + 1$. \pvs
\begin{lemma}\label{lemma:(3,0,4)}
    Let $(a,b,c)$ be a game state for an $F_4$ Black Hole Zeckendorf game. For all $\a,\g \in \Znn$ such that $\alpha \geq \gamma$, $(3\alpha,0,4\gamma)$ is a $P$ position.
\end{lemma}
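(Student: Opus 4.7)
The plan is to proceed by induction on $\gamma$. For the base case $\gamma = 0$, the position $(3\alpha, 0, 0)$ is a $P$ position by Theorem 4.1, since $3\alpha$ is a multiple of $3$ and therefore never equal to $2$.

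For the inductive step, assume $\alpha \geq \gamma \geq 1$. With $b = 0$, the only moves available from $(3\alpha, 0, 4\gamma)$ to Red are the merge M, producing $(3(\alpha-1)+1, 1, 4\gamma)$, and the split S\tsub{3}, producing $(3\alpha+1, 0, 4(\gamma-1)+2)$; the moves A\tsub{1}, A\tsub{2}, and S\tsub{2} are all blocked because $b = 0$. The M branch is dispatched immediately by Lemma \ref{lemma:nck12}, since $k_1 = 1$ makes $(3(\alpha-1)+1, 1, 4\gamma)$ an $N$ position. For the S\tsub{3} branch, I would show that $(3\alpha+1, 0, 4(\gamma-1)+2)$ is an $N$ position by exhibiting a winning Blue move: Blue replies with S\tsub{3} to reach $(3\alpha+2, 0, 4(\gamma-1))$, which we claim is a $P$ position.

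The main obstacle is verifying the $P$-position claim for $(3\alpha+2, 0, 4(\gamma-1))$, since this lies in the $k_1 = 2,\ k_3 = 0$ cell of Figure \ref{fig:winners(a,0,c)} rather than in the cell covered by the inductive hypothesis. When $\gamma = 1$, the target reduces to $(3\alpha+2, 0, 0)$, which is $P$ by Theorem \ref{thrm: allina} for $\alpha \geq 1$, closing that case cleanly. For $\gamma \geq 2$, the natural remedy is a joint induction on $\gamma$ that proves Lemma \ref{lemma:(3,0,4)} simultaneously with the companion statement that $(3\alpha+2, 0, 4\gamma)$ is a $P$ position for $\alpha \geq \gamma + 1$. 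Both claims share the same skeleton: Lemma \ref{lemma:nck12} kills M branches landing at $k_1 \in \{1, 2\}$, while Blue's S\tsub{3} mirror sends S\tsub{3} branches into a smaller instance of the other claim, and the whole induction is anchored at $\gamma = 0$ by Theorem \ref{thrm: allina}. The delicate point is confirming that no move from the companion position escapes this pattern; in particular, the companion's M branch yields $(3\alpha, 1, 4\gamma)$ with $k_1 = 0$, which Lemma \ref{lemma:nck12} does not cover, so one must resolve it by a direct A\tsub{1} response into yet another cell of the table, making it clear that the full solution ultimately requires synchronizing a family of these claims rather than proving Lemma \ref{lemma:(3,0,4)} in isolation.
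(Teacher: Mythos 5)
Your opening matches the paper's: induction on $\g$ anchored by Theorem \ref{thrm: allina}, Red's merge from $(3\a,0,4\g)$ killed by Lemma \ref{lemma:nck12} since it lands at $k_1=1$, and the split answered by a mirror split to $(3\a+2,0,4(\g-1))$. But the proof has a genuine gap exactly where the real work lies: you never establish that $(3\a+2,0,4(\g-1))$ is a $P$ position for $\g\geq 2$. Your proposed fix — a joint induction with the companion claim, with its merge branch $(3\a,1,4\g')$ (where $k_1=0$, so Lemma \ref{lemma:nck12} does not apply) ``resolved by a direct A\tsub{1} response into yet another cell of the table'' — is not a proof, and as stated it is circular: the cells you would need (e.g.\ that $(3\a+2,0,4\g+1)$ is $P$ for $\a\geq\g$, Theorem \ref{thm:B=0}, or the statements in Corollaries \ref{cor:nck0} and \ref{cor: nca>c}) are all proven downstream of Lemma \ref{lemma:(3,0,4)} itself, so Figure \ref{fig:winners(a,0,c)} cannot be cited here. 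Folding everything into one simultaneous induction could in principle work, but you have not set it up, and you explicitly leave the hardest branch unverified.

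For comparison, the paper closes that branch inside the same induction without appealing to later results. After Red merges to $(3\a,1,4\g)$, Blue plays A\tsub{1} to $(3(\a-1)+2,0,4\g+1)$; if Red then merges, Blue plays A\tsub{2} and lands in the inductive hypothesis (using $\a-1\geq\g$), while if Red splits, Blue enters a forced splitting chain in which Blue repeatedly places $(3\a+1+2r,0,4(\g-1-r)+1)$, terminating at a position $(\cdot,0,1)$ which is $P$ by Corollary \ref{coralla}; the only merge deviations Red can make along the chain without losing immediately to Lemma \ref{lemma:nck12} are those landing at $k_1=0$, and each of these is answered by A\tsub{2} back into the inductive hypothesis. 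A similar bookkeeping point is hidden in your phrase that the mirror split ``sends S\tsub{3} branches into a smaller instance of the other claim'': a double split from a $k_1=2$ position lands at $k_1=1$, not at either of your two claims, and one more round of splits (with the intermediate merge killed by Lemma \ref{lemma:nck12}) is needed before the position returns to the form $(3\a',0,4\g')$ covered by the hypothesis. Until the $k_1=0$ merge branch and this cycling through $k_1=0,1,2$ are handled explicitly, the induction does not close.
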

\begin{proof}
We proceed by strong induction on $\g$, letting $\a$ be an arbitrary integer such that $\a \geq \g$. As a base case, note that $(3\a,0,0)$ is a $P$ position by Theorem \ref{thrm: allina}, since $3\a\neq 2$ for any integer $\a$.  Then, consider the game on $(3\alpha,0,4(\g+1))$ as played out in Figure \ref{fig:(3,0,4)}, with $\a \geq \g+1$, assuming as our inductive hypothesis that there exists a fixed $\g$ such that placing $(3\alpha,0,4x)$ wins for all $\alpha \geq x$ and $x \leq \g$. \pvs
\begin{figure}
    \centering
   \begin{tikzpicture}
 \node (NS1) at (0,0){$\ST{(3\a,0,4(\g+1))}$};
	 \node (ST1ii) at(0,-1.5) {$\NS{(3\a+1,0,4\g+2)}$};
	 	\node (NS2i) at (0,-3){$\ST{(3\a+2,0,4\g)}$};
 			\node (ST2ii) at (5,-4.5){$\NS{(3(\a+1),0,4(\g-1)+2)}$};
				\node (NS3ii) at (5,-6){$\ST{(3(\a+1)+1,0,4(\g-1))}$};
					\node (ST3iii) at (5,-7.5){$\NS{(3(\a+1)+2,0,4(\g-2)+2)}$};
						\node  [label={[label distance=1mm]270:B.4.S\tsub{3}}](NS4iii) at (5,-9){$\ST{(3(\a+2),0,4(\g-2))}$};
			\node (ST2i) at (-5,-4.5){$\NS{(3\a,1,4\g)}$};
				\node (NS3i) at (-5,-6){$\ST{(3(\a-1)+2,0,4\g+1)}$};
					\node (ST3i) at (0,-7.5){$\NS{(3\a,0,4(\g-1)+3)}$};
						\node (NS4i) at (0,-9){$\ST{(3\a+1,0,4(\g-1)+1)}$};
							\node (ST4i) at (0,-10.5){$\NS{(3\a+2,0,4(\g-2)+3)}$};
                                \node (NS5i) at (0,-12){$\ST{(3\a+3,0,4(\g-2)+1)}$};
                                \node (NS6i) at (0,-13.5){$\ST{. . .}$};
                                \node [label={[label distance=1mm]270:B.7.S\tsub{3}}](NS7i) at (0,-15){$\ST{(3\a+1+2r,0,4(\g-1-r)+1)}$};
					\node (ST3ii) at (-5,-7.5){$\NS{(3(\a-1),1,4\g+1)}$};
						\node [label={[label distance=1mm]270:B.4.A\tsub{2}}](NS4ii) at (-5,-9){$\ST{(3(\a-1),0,4\g)}$};
             \draw  (NS1) -- node[right] {S\tsub{3}}(ST1ii);
             \draw  (ST1ii) -- node[right] {S\tsub{3}}(NS2i);
             \draw  (NS2i) -- node[left=5mm] {M}(ST2i);
                 \draw  (ST2i) -- node[right] {A\tsub{1}}(NS3i);
                 \draw  (NS3i) -- node[right=5mm] {S\tsub{3}}(ST3i);
                 \draw  (ST3i) -- node[right] {S\tsub{3}}(NS4i);
                 \draw  (NS4i) -- node[right] {S\tsub{3}}(ST4i);
                 \draw  (ST4i) -- node[right] {S\tsub{3}}(NS5i);
                 \draw  (NS5i) -- node[right] {S\tsub{3}}(NS6i);
                 \draw  (NS6i) -- node[right] {S\tsub{3}}(NS7i);
            \draw  (NS2i) -- node[right=5mm] {S\tsub{3}}(ST2ii);
            \draw  (ST2ii) -- node[right] {S\tsub{3}}(NS3ii);
                \draw  (NS3i) -- node[right] {M}(ST3ii);
                \draw  (ST3ii) -- node[right] {A\tsub{2}}(NS4ii);
            \draw  (NS3ii) -- node[right] {S\tsub{3}}(ST3iii);
            \draw  (ST3iii) -- node[right] {S\tsub{3}}(NS4iii);
    \end{tikzpicture}
    \caption{Game Tree for the Game State $(3\a,0,4(\g+1))$.}
    \label{fig:(3,0,4)}
  \end{figure}
Note $\a \geq \g +1$ implies that $\a -1 \geq \g$ and $\a+2 \geq \g-2$ so B.4.A\tsub{2} and B.4.S\tsub{3} both win by inductive hypothesis. Some special consideration is needed here for the case when $\g = 1$, since B.4.S\tsub{3} $= (3(\a+2),0,-4)$ is not a valid game state. Here, B.3.S\tsub{3} $= (3(\a+1)+1,0,0)$ is a $P$ position by Theorem \ref{thrm: allina} because $3(\a+1)+1 \neq 2$ for $\a \in \Znn$. \pvs
For the center branch of the tree, if both players always choose to split once the tree is of the form $(3\a+1,0,4(\g-1)+1)$, then Blue always places a board of the form $(3\a+1+2r,0,4(\g-1-r)+1)$, where $r$ is a round of each player moving once. Then, after the $(\g-1)$\textsuperscript{st} round, Blue will place down $(3\a+1+2(\g-1),0,1)$ which is a $P$ position by Corollary \ref{coralla}. \pvs
However, it is necessary to consider that every time Blue places $(3\a+1+2r,0,4(\g-1-r)+1)$ with $2r \equiv 1 \pmod{3}$, Red has the opportunity to place $(3\a+2r-1,1,4(\g-1-r)+1)$ without immediately losing by Lemma \ref{lemma:nck12}, since here $3\a+2r-1 \equiv 0 \pmod{3}$. However, from here Blue can add from columns $F_2$ and $F_3$ to place $(3\a+2r-1,0,4(\g-1-r))$ which wins by inductive hypothesis, since $\a \geq \g-1-r$. An example of this can be seen in the left most column. \pvs

Therefore, $(3\a,0,4\g)$ is a $P$ position for all $\a\geq \g$.

\end{proof} \newpage
\begin{corollary}\label{cor:nck0}
    Let $(a,b,c)$ be a game state for an $F_4$ Black Hole Zeckendorf game. For all $\a,\g \in \Znn$, such that $\alpha \geq \gamma$, $(3\alpha,1,4\gamma+1)$ is an $N$ position. For all  $\a,\g \in \Znn$ such that $\alpha \geq \gamma+1$, $(3\alpha,1,4\gamma)$ is an $N$ position.
\end{corollary}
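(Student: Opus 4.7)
Each statement asserts an $N$ position, so it suffices to exhibit a single legal move from the given state to a $P$ position; in both cases the target $P$ position is supplied either by Lemma~\ref{lemma:(3,0,4)} directly or by an intermediate state analyzed inside its proof.

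For the first statement, from $(3\alpha, 1, 4\gamma+1)$ the next player plays A\tsub{2}: one piece is removed from each of $F_2$ and $F_3$, and the piece that would be placed in $F_4$ is lost to the black hole. The resulting state is $(3\alpha, 0, 4\gamma)$, which is a $P$ position by Lemma~\ref{lemma:(3,0,4)} because $\alpha \geq \gamma$. Hence $(3\alpha, 1, 4\gamma+1)$ is an $N$ position.

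For the second statement, from $(3\alpha, 1, 4\gamma)$ the hypothesis $\alpha \geq \gamma+1 \geq 1$ makes A\tsub{1} legal; playing it yields $(3\alpha-1, 0, 4\gamma+1) = (3(\alpha-1)+2, 0, 4\gamma+1)$. When $\gamma=0$, this is $(3(\alpha-1)+2, 0, 1)$, a $P$ position by Corollary~\ref{coralla}. When $\gamma \geq 1$, $(3(\alpha-1)+2, 0, 4\gamma+1)$ is precisely the state reached in the game tree of Figure~\ref{fig:(3,0,4)} along the A\tsub{1} edge out of $(3\alpha, 1, 4\gamma)$, and the inductive argument given there proves it is a $P$ position (our hypothesis $\alpha-1 \geq \gamma$ is exactly the condition used by the lemma's induction). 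Either way, $(3\alpha, 1, 4\gamma)$ is an $N$ position.

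The main subtlety is that the winning reply for the second part lands on a state that is established as $P$ only inside the game tree of Lemma~\ref{lemma:(3,0,4)} rather than as a standalone named result. The cleanest presentation is therefore to reference that tree explicitly and verify that the parameter inequality $\alpha \geq \gamma+1$ aligns with the condition under which that tree's induction classifies $(3(\alpha-1)+2, 0, 4\gamma+1)$ as $P$; once this alignment is checked, no additional induction or case analysis is needed.
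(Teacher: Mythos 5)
Your proposal is correct. The first statement is handled exactly as in the paper: play A\tsub{2} from $(3\a,1,4\g+1)$ to reach $(3\a,0,4\g)$, a $P$ position by Lemma~\ref{lemma:(3,0,4)} since $\a\geq\g$. For the second statement you take a genuinely different route. The paper argues by contradiction (a strategy-stealing step): assuming $(3\a,1,4\g)$ is a $P$ position, its A\tsub{2}-option $(3\a,0,4(\g-1)+3)$ must be an $N$ position, so after Red's A\tsub{1} to $(3(\a-1)+2,0,4\g+1)$ Blue's only non-losing reply is M to $(3(\a-1),1,4\g+1)$, and Red's A\tsub{2} then lands on $(3(\a-1),0,4\g)$, a $P$ position by Lemma~\ref{lemma:(3,0,4)} --- contradiction. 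This sidesteps having to certify $(3(\a-1)+2,0,4\g+1)$ as a $P$ position at this stage. You instead certify it directly, by pointing at the subtree rooted at that node in Figure~\ref{fig:(3,0,4)}: Red's only replies there are M (answered by A\tsub{2} to $(3(\a-1),0,4\g)$) and S\tsub{3} (answered by the splitting chain, with deviations handled via Lemma~\ref{lemma:nck12} and Corollary~\ref{coralla}), and since Lemma~\ref{lemma:(3,0,4)} is by now fully proven, every appeal to its inductive hypothesis in that subtree can be replaced by the lemma itself; your condition $\a-1\geq\g$ is exactly what those appeals need, and your separate treatment of $\g=0$ via Corollary~\ref{coralla} closes the case where S\tsub{3} is unavailable. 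This is legitimate and non-circular --- crucially you cite the lemma's internal tree rather than Theorem~\ref{thm:B=0} Part~\ref{thm:st21}, which would be circular through Corollary~\ref{cor: nca>c}. The trade-off: your version exhibits an explicit winning first move together with a $P$-certificate for the resulting state (the fact later packaged constructively in Theorem~\ref{thm:B=0}), at the cost of re-using an argument embedded inside another proof, including its chain analysis and the non-constructive Lemma~\ref{lemma:nck12}; the paper's contradiction argument is shorter and needs only the statement of Lemma~\ref{lemma:(3,0,4)}, but does not by itself classify the intermediate position.
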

\begin{proof}
$(3\alpha,0,4\gamma)$ can be placed immediately after $(3\alpha,1,4\gamma+1)$ and since $(3\alpha,0,4\gamma)$ is a $P$ position for all $\a \geq \g$ as shown in Lemma \ref{lemma:(3,0,4)}, this implies $(3\alpha,1,4\gamma+1)$ is an $N$ position.
Similarly, $(3\alpha,1,4\gamma)$ is an $N$ position for all $\alpha \geq \gamma+1$. For contradiction's sake, assume it is a $P$ position. Then, consider the game tree below.
\begin{center}
    \begin{tikzpicture}
\node (ST1) at (0,0){$\ST{(3\a,1,4\g)}$};
	\node (NS1i) at (-3,-1.5){$\NS{(3(\a-1)+2,0,4\g+1)}$};
		\node (ST2i) at (-3,-3){$\ST{(3(\a-1),1,4\g+1)}$};
			\node [label={[label distance=1mm]270:R.2.A\tsub{2}}] (NS3i) at (-3,-4.5){$\NS{(3(\a-1),0,4\g)}$};
        \node (ST2ii) at (3,-3){$\ST{(3\a,0,4(\g-1)+3)}$};
	\node (NS1ii) at (3,-1.5){$\NS{(3\a,0,4(\g-1)+3)}$};
            \draw  (ST1) -- node[left=3mm] {A\tsub{1}}(NS1i);
            \draw  (ST1) -- node[right=3mm] {A\tsub{2}}(NS1ii);
            \draw  (NS1i) -- node[right] {M}(ST2i);
            \draw   (NS1i) -- node[right=5mm] {S\tsub{3}}(ST2ii);
            \draw   (ST2i) -- node[right] {A\tsub{2}}(NS3i);
        \end{tikzpicture}
\end{center}\pvs
By assumption, $(3\alpha,0,4(\gamma-1)+3)$ is an $N$ position, so if Red places $(3(\alpha-1)+2,0,4\gamma+1)$, Blue must place $(3(\alpha-1),1,4\gamma+1)$. Since $\alpha \geq \gamma+1$, Red wins at R.2.A\tsub{2} by Lemma \ref{lemma:(3,0,4)}. Therefore, it must also be true that Red has a winning strategy for $(3\alpha,1,4\gamma)$ for all $\a \geq \g +1$. Then, $(3\alpha,1,4\gamma)$ is an $N$ position for all $\a \geq \g +1$.
\end{proof}\pvs
\begin{corollary}\label{cor: nca>c} Let $(a,b,c)$ be a game state for an $F_4$ Black Hole Zeckendorf game, with $\a,\g, k_1 \in \Znn$ such that $0 \leq k_1 \leq 2$. Then, the following positions are $P$ positions
\begin{enumerate}
    \item $(3 \alpha+ k_1, 0, 4\gamma)$ such that $\alpha\geq \gamma+1$ and \label{cor:3k0}
    \item $(3 \alpha+ k_1, 0, 4\gamma+1)$ such that $\alpha \geq \gamma$.\label{cor:3k1}
\end{enumerate}
The following positions are $N$ positions
\begin{enumerate}[resume]
    \item $(3 \alpha+ k_1, 0, 4\gamma+2)$ such that $\alpha \geq \gamma+1$ and\label{cor:3k2}
    \item $(3 \alpha+ k_1, 0, 4\gamma+3)$ such that $\alpha \geq \gamma$.\label{cor:3k3}
\end{enumerate}
\end{corollary}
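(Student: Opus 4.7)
The plan is to run a simultaneous strong induction on $\g$ that handles all four statements together, treating cases (1) and (2) as the substantive $P$-position claims and deriving cases (3) and (4) as immediate corollaries by exhibiting a single winning move. Structurally, cases (1) and (2) extend Lemma \ref{lemma:(3,0,4)} (which handled $k_1=0$ in case (1)) to the residues $k_1=1,2$, and then cases (3) and (4) follow because the split-$F_3$ move $(a,0,c) \to (a+1,0,c-2)$ shifts the residue of $c$ down by $2 \pmod 4$, moving an $N$ position of the form $4\g+2$ or $4\g+3$ onto the corresponding $P$ position of the form $4\g$ or $4(\g-1)+1$ asserted by (1) or (2).

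First I would dispatch cases (3) and (4). From $(3\a+k_1,0,4\g+2)$ with $\a \geq \g+1$, splitting from $F_3$ produces $(3\a+k_1+1,0,4\g)$; rewriting $3\a+k_1+1$ as $3\a'+k_1'$ (so $\a' \in \{\a,\a+1\}$), one checks in each of the three sub-cases $k_1 \in \{0,1,2\}$ that $\a' \geq \g+1$, so the resulting state is a $P$ position by (1). The identical arithmetic routes $(3\a+k_1,0,4\g+3)$ with $\a \geq \g$ to $(3\a'+k_1',0,4\g+1)$ with $\a' \geq \g$, a $P$ position by (2). No boundary issues arise because $\g \geq 0$ implies the split is available (we have $c \geq 2$).

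Next I would handle cases (1) and (2) by enumerating the at most two legal moves for Red from $(3\a+k_1,0,4\g+k_3)$ with $k_3 \in \{0,1\}$, namely a merge (if $a \geq 2$) and a split from $F_3$ (if $\g \geq 1$, or more precisely $c \geq 2$), and showing each lands on an $N$ position. The split-$F_3$ move maps into case (3) or case (4) at $\g-1$, which is available by the inductive hypothesis on $\g$; the arithmetic on the residues is the same calculation as above and again verifies the required inequality $\a' \geq \g'+1$ or $\a' \geq \g'$. The merge move maps to a state $(3\a+k_1-2,1,4\g+k_3)$; here I would split on the value of $k_1$ to invoke the right result: when $k_1 \in \{1,2\}$ the residue of $3\a+k_1-2$ modulo $3$ is nonzero, so Lemma \ref{lemma:nck12} declares the position an $N$ position outright, while when $k_1=0$ the merged state has $a$-coordinate divisible by $3$, and Corollary \ref{cor:nck0} applies, giving $N$ provided $\a-1 \geq \g$ in the $k_3=0$ situation or $\a \geq \g$ in the $k_3=1$ situation; each of these is exactly the hypothesis of (1) or (2).

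The remaining work is to sweep up the boundary cases $\g=0$ (no split-$F_3$ available) and very small $a$ (no merge available). For $\g=0$, cases (1) and (2) reduce to states $(3\a+k_1,0,0)$ with $\a \geq 1$ and $(3\a+k_1,0,1)$ with $\a \geq 0$, both of which are $P$ positions by Theorem \ref{thrm: allina} and Corollary \ref{coralla} respectively (noting $3\a+k_1 \neq 2$ whenever $\a \geq 1$). The only subtle point I anticipate is the merge-move analysis when $k_1 = 0$, where I must be careful that the Corollary \ref{cor:nck0} hypothesis $\a-1 \geq \g$ (respectively $\a \geq \g+1$) is guaranteed by the statement's hypothesis; checking these off-by-one inequalities cleanly across the four residue classes of $c$ is the place where the proof is most error-prone, and it is what forces the asymmetry between $\a \geq \g$ in (2) and $\a \geq \g+1$ in (1).
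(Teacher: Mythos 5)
Your overall strategy is the same as the paper's: induct on $\g$, dispatch Parts (3) and (4) by a single split onto Parts (1) and (2), and in the inductive step for (1) and (2) observe that Red's only moves from $(3\a+k_1,0,4\g+k_3)$ are a merge and a split from $F_3$, handling the split by the two-split chain back to the inductive hypothesis and the merge by Lemma \ref{lemma:nck12} or Corollary \ref{cor:nck0}. The base cases via Theorem \ref{thrm: allina} and Corollary \ref{coralla} also match the paper. However, your case analysis on the merge move is backwards, and as written it leaves a genuine hole. The merged state is $(3\a+k_1-2,1,4\g+k_3)$. For $k_1=2$ its first coordinate is $3\a\equiv 0\pmod 3$, so Lemma \ref{lemma:nck12} (which requires $1\le k_1\le 2$ in that coordinate) says nothing about it; your claim that it ``declares the position an $N$ position outright'' is false, and indeed such states $(3\a,1,\cdot)$ can be $P$ positions when $\a$ is small relative to $\g$ (Lemma \ref{lemma:stk0}, Theorem \ref{thm:wink1=0}). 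This branch is exactly where Corollary \ref{cor:nck0} and the hypotheses $\a\ge\g+1$ (for $k_3=0$) and $\a\ge\g$ (for $k_3=1$) must be invoked. Conversely, for $k_1=0$ the merged coordinate is $3\a-2=3(\a-1)+1$, which is \emph{not} divisible by $3$, so Corollary \ref{cor:nck0} is inapplicable there and Lemma \ref{lemma:nck12} handles it directly; the ``subtle point'' you flagged is actually the easy branch.

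The fix is simply to swap the labels: Lemma \ref{lemma:nck12} covers the merge when $k_1\in\{0,1\}$ (merged residues $1$ and $2$), while $k_1=2$ requires Corollary \ref{cor:nck0} together with the inequality checks you already wrote down (under the wrong heading). This is exactly how the paper's proof proceeds, so the argument is salvageable with that correction; but as stated, the $k_1=2$ merge branch is unproved.
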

\begin{proof}
    We prove Part \ref{cor:3k0} by inducting on $\g$, letting $\a \geq \g+1$ be arbitrary. As a base case, note that for $\g=0$, $(3\a+k_1,0,0)$ is a $P$ position by Theorem \ref{thrm: allina}. As our inductive hypothesis, assume $(3 \alpha+ k_1, 0, 4\gamma)$ is a $P$ position for all $\alpha\geq\gamma+1$ and consider the tree below, on $(3 \alpha+ k_1, 0, 4(\gamma+1))$ letting $\a \geq \g+2$.\pvs
    \begin{center}
    \begin{tikzpicture}
\node (ST1) at (0,0){$\ST{(3\a+k_1,0,4(\g+1))}$};
	\node (NS1i) at (3,-1.5){$\NS{(3\a+k_1+1,0,4\g+2)}$};
		\node [label={[label distance=1mm]270:B.2.S\tsub{3}}](ST2i) at (3,-3){$\ST{(3\a+k_1+2,0,4\g)}$};
	\node [label={[label distance=1mm]270:R.1.M}](NS1ii) at (-3,-1.5){$\NS{(3(\a-1)+k_1+1,1,4(\g+1))}$};
            \draw  (ST1) -- node[right=3mm] {S\tsub{3}}(NS1i);
            \draw  (ST1) -- node[left=3mm] {M}(NS1ii);
            \draw  (NS1i) -- node[right] {S\tsub{3}}(ST2i);
        \end{tikzpicture}
\end{center}\pvs
By inductive hypothesis B.2.S\tsub{3} is a $P$ position, given that $\a \geq \g +2$ implies $\a \geq \g+1$. If $k_1=0,1$ then R.1.M is an $N$ position by Lemma \ref{lemma:nck12}. If $k_1=2$, then R.1.M is $(3\a,1,4(\g+1))$ which is an $N$ position by Corollary \ref{cor:nck0} since $\alpha \geq \gamma +2$. \pvs
Similarly, we prove Part \ref{cor:3k1} by inducting on $\g$, letting $\a \geq \g$ be arbitrary. As a base case, for $\g=0$, $(3\a+k_1,0,1)$ is a $P$ position by Corollary \ref{coralla}. Then, assume $(3 \alpha+ k_1, 0, 4\gamma+1)$ is a $P$ position for all $\a\geq \g$ and consider the game tree on $(3 \alpha+ k_1, 0, 4(\gamma+1)+1)$, with $\a\geq \g+1$ .
    \begin{center}
    \begin{tikzpicture}
\node (ST1) at (0,0){$\ST{(3\a+k_1,0,4(\g+1)+1)}$};
	\node (NS1i) at (3,-1.5){$\NS{(3\a+k_1+1,0,4\g+3)}$};
		\node [label={[label distance=1mm]270:B.2.S\tsub{3}}](ST2i) at (3,-3){$\ST{(3\a+k_1+2,0,4\g+1)}$};
	\node [label={[label distance=1mm]270:R.1.M}](NS1ii) at (-3,-1.5){$\NS{(3(\a-1)+k_1+1,1,4(\g+1)+1)}$};
            \draw  (ST1) -- node[right=3mm] {S\tsub{3}}(NS1i);
            \draw  (ST1) -- node[left=3mm] {M}(NS1ii);
            \draw  (NS1i) -- node[right] {S\tsub{3}}(ST2i);
        \end{tikzpicture}
\end{center}\pvs
By inductive hypothesis, B.2.S\tsub{3} is a $P$ position given that $\a \geq \g+1$ implies $\a \geq \g$. Again, if $k_1=0,1$ then R.1.M is an $N$ position by Lemma \ref{lemma:nck12}. If $k_1=2$, then R.1.M is $(3\a,1,4(\gamma+1)+1))$ which is an $N$ position by Corollary \ref{cor:nck0} given that $\alpha \geq \gamma+1$.\pvs
As a direct result, $(3 \alpha+ k_1, 0, 4\gamma+2)$ is an $N$ position for all $\alpha \geq \gamma+1$ as claimed in Part \ref{cor:3k2} since the player to move next can place $(3 \alpha+ k_1+1, 0, 4\gamma)$ which is a $P$ position as proven above.\pvs
Likewise, $(3 \alpha+ k_1, 0, 4\gamma+3)$ is an $N$ position for all $(3 \alpha+ k_1, 0, 4\gamma+3)$ when $\alpha \geq \gamma$ as claimed in Part \ref{cor:3k3}, since the player to move next can place $(3 \alpha+ k_1+1, 0, 4\gamma+1)$ which is a $P$ position as proven above.
\end{proof}\pvs
\begin{corollary}\label{cor: nca>c+2}
    Let $(a,b,c)$ be a game state for an $F_4$ Black Hole Zeckendorf game. For all $\a \geq \g$, $(3\a, 1, 4\g+2)$ is an $N$ position.  For all $\a \geq \g +3$, $(3\a, 1, 4\g+3)$ is an $N$ position.
\end{corollary}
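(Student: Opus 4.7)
The plan is to prove both parts directly by exhibiting, in each case, a single move for Red (the next player) that lands in a $P$ position from the family classified in Corollary \ref{cor: nca>c}. The required inequality on $\a$ and $\g$ will then drop out as exactly the hypothesis of the relevant part of that corollary applied to the resulting position.

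For $(3\a, 1, 4\g+2)$ with $\a \geq \g$, the natural candidate is A\tsub{2}: remove one piece from each of the $F_2$ and $F_3$ columns, sending one piece into the black hole, and reach $(3\a, 0, 4\g+1)$. By Corollary \ref{cor: nca>c} Part (\ref{cor:3k1}) with $k_1 = 0$, this is a $P$ position precisely when $\a \geq \g$, which is the hypothesis. Hence Red has a winning move and $(3\a, 1, 4\g+2)$ is an $N$ position.

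For $(3\a, 1, 4\g+3)$ with $\a \geq \g+3$, the A\tsub{2} move now yields $(3\a, 0, 4\g+2)$, which by Corollary \ref{cor: nca>c} Part (\ref{cor:3k2}) is itself an $N$ position and therefore useless to Red. The move S\tsub{3} lands at $(3\a+1, 1, 4\g+1)$, an $N$ position by Lemma \ref{lemma:nck12}, and a merge leaves a $b=2$ state outside the scope of the earlier classifications. The winning move is instead A\tsub{1}, which produces $(3\a-1, 0, 4(\g+1)) = (3(\a-1)+2, 0, 4(\g+1))$. By Corollary \ref{cor: nca>c} Part (\ref{cor:3k0}) with $k_1 = 2$, this is a $P$ position whenever $\a-1 \geq (\g+1)+1$, that is, whenever $\a \geq \g+3$.

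The only real (and modest) obstacle is recognizing why the threshold jumps from $\a \geq \g$ to $\a \geq \g+3$ between the two parts. In the second case the analog of the first winning move fails, so Red is forced to use A\tsub{1} instead, and this move simultaneously shifts the effective ``$\g$'' upward by one (from $4\g+3$ to $4(\g+1)$) and the effective ``$\a$'' downward by one (from $3\a$ to $3(\a-1)+2$). The hypothesis $\a' \geq \g'+1$ of Part (\ref{cor:3k0}) therefore becomes $\a-1 \geq \g+2$, exactly the stated bound. Once this accounting is done, both parts of the corollary are one-line consequences of Corollary \ref{cor: nca>c}.
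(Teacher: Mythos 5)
Your proof is correct and is essentially the paper's own argument: for the first part play A\tsub{2} to reach $(3\a,0,4\g+1)$, and for the second part play A\tsub{1} to reach $(3(\a-1)+2,0,4(\g+1))$, invoking Corollary \ref{cor: nca>c} in each case. (You in fact cite the correct part, \ref{cor:3k1}, for the first position, where the paper's text cites Part \ref{cor:3k0} in what appears to be a minor reference slip; your extra remarks on why other moves fail in the second part are accurate but not needed.)
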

\begin{proof}
   The first player to move from $(3\a, 1, 4\g+2)$ can add the second two columns to place 
   $(3\a,0,4\g + 1)$ which is a $P$ position by Corollary \ref{cor: nca>c} Part \ref{cor:3k0} for all $\a \geq \g$. \pvs
   The first player to move from $(3\a, 1, 4\g+3)$ can add the first two columns to place $(3(\a-1)+2,0,4(\g + 1)),$ which is a $P$ position by Corollary \ref{cor: nca>c} Part \ref{cor:3k0} for all $\a -1 \geq \g +2$ which is equivalent to $\a \geq \g+3$.
\end{proof}\pvs
\begin{lemma}\label{lemma:stk0}
    Let $(a,b,c)$ be a game state for an $F_4$ Black Hole Zeckendorf game, with $\a,\g, k_3 \in \Znn$ such that $0 \leq k_3 \leq 3$. $(3\alpha,1,4\gamma+k_3)$ is a $P$ position for all $\alpha \leq  \gamma - 2$.
\end{lemma}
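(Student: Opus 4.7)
My plan is to induct on $\alpha$, treating $\gamma \geq \alpha + 2$ and $k_3 \in \{0,1,2,3\}$ as arbitrary, and show that each of Red's moves from $(3\alpha, 1, 4\gamma+k_3)$ lands in an $N$ position. For the base case $\alpha = 0$ only A2 and S3 are available: A2 yields $(0, 0, 4\gamma+k_3-1)$ with $c \geq 7$ outside $\{0,1,5\}$, so it is $N$ by Theorem \ref{thrm: allinc}, and S3 yields $(1, 1, 4\gamma+k_3-2)$, which is $N$ by Lemma \ref{lemma:nck12} since $k_1 = 1$.

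For $\alpha \geq 1$ I would argue via a game tree in the spirit of Lemma \ref{lemma:(3,0,4)}. Moves M and A1 admit symmetric Blue responses that both land at $(3(\alpha-1), 1, 4\gamma+k_3+1)$, a $P$ position by the inductive hypothesis on $\alpha$; when $k_3 = 3$ the same position is reinterpreted as $(3(\alpha-1), 1, 4(\gamma+1))$ with $k_3' = 0$, still satisfying the required inequality $\alpha - 1 \leq (\gamma+1) - 2$. Move S3 produces $(3\alpha + 1, 1, \ldots)$ with $k_1 = 1$, which is $N$ by Lemma \ref{lemma:nck12}.

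The main obstacle is Red's A2 move, which leaves the $b = 1$ regime for $(3\alpha, 0, 4\gamma+k_3-1)$. Blue's winning response should be S3, reaching $(3\alpha+1, 0, 4\gamma+k_3-3)$, and I would verify this is $P$ via a short sub-tree: from there Red M gives a $k_1 = 2$ position that is $N$ by Lemma \ref{lemma:nck12}, while Red S3 gives $(3\alpha+2, 0, 4\gamma+k_3-5)$. For this last position I would show Blue has a winning move, either M to $(3\alpha, 1, 4\gamma+k_3-5)$—a $P$ position by the inductive hypothesis applied with a reduced $\gamma$—or S3 to an $(a,0,c)$ position handled by Corollary \ref{cor: nca>c}, with a case split on $k_3 \in \{0,1,2,3\}$ showing at least one response succeeds in each case.

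I expect the hard part will be stitching the A2 sub-tree together cleanly across the four values of $k_3$ and near the boundary $\alpha = \gamma - 2$: in a handful of these boundary configurations the reduced position does not immediately fit the inductive hypothesis, and an extra ply in the game tree plus a direct appeal to Lemma \ref{lemma:(3,0,4)} is needed to confirm that the intermediate $(a, 0, c)$ positions are $N$ as required. This bookkeeping is what forces the A2 analysis to be appreciably longer than the treatment of M, A1, and S3.
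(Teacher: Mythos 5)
Your base case and your handling of the M, A\tsub{1}, and S\tsub{3} branches are sound, but the A\tsub{2} branch breaks the induction scheme you chose. You induct on $\a$ with $\g$ arbitrary, yet the line Red A\tsub{2} $\to$ Blue S\tsub{3} $\to$ Red S\tsub{3} forces you to the position $(3\a+2,0,4\g+k_3-5)$, from which Blue's only two options are M, landing at $(3\a,1,4\g+k_3-5)$, and S\tsub{3}, landing at $(3(\a+1),0,4\g+k_3-7)$. The M option has the \emph{same} $\a$ and a smaller $\g$, so ``the inductive hypothesis applied with a reduced $\g$'' is not available to you: under an induction on $\a$ alone, that is an instance of the statement being proved, not of the hypothesis. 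Your fallback, the S\tsub{3} option justified by Corollary \ref{cor: nca>c}, only works when $a$ is large relative to $c$ (roughly $\a \geq \g-3$); in the deep interior $\a \leq \g-4$ that position is in fact an $N$ position (it is exactly the small-$a$ regime of Figure \ref{fig:winners(a,0,c)}), so S\tsub{3} is a losing move for Blue there and M is forced. Hence the gap is not, as you suggest, confined to ``a handful of boundary configurations'' fixable by Lemma \ref{lemma:(3,0,4)}; it affects all sufficiently interior cases, and since $b=0$ positions admit no other responses, it cannot be routed around within your tree.

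The repair is to change the induction variable: the recursion generated by the A\tsub{2} branch keeps $\a$ fixed and decreases $\g$, so the induction must be (strong) induction on $\g$, or on a joint measure such as $(\g,\a)$ or the board value. That is what the paper does: it inducts on $\g$, which makes your problematic M option legitimate, and it then pays the price on the other side — after Red's M or A\tsub{1} the third coordinate \emph{increases}, so that branch is no longer covered by the hypothesis and is instead handled by a ``Red keeps merging/adding until Blue places $(0,1,c)$'' argument via Corollary \ref{corallc}, together with per-$k_3$ case analysis and extra trees for the near-boundary cases $\g=\a+1,\a+2$ using Corollary \ref{cor: nca>c}. Your $\a$-induction buys a clean treatment of exactly that branch (your symmetric responses land at $(3(\a-1),1,\cdot)$ with the inequality intact), but it cannot support the A\tsub{2} branch; a correct proof along your lines would need a double induction capturing both descents.
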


\begin{proof}
    We proceed by strong induction on $\g$, letting $\g \geq 2$. We then assume as our hypothesis that there exists a fixed $\g$ such that $(3\alpha,1,4x+k_3)$ is a $P$ position for all $\alpha \leq  x - 2$ and all $\g \geq x$. Then, consider the game on $(3\a,1,4(\gamma+1)+k_3)$ letting $\alpha \leq \gamma - 1$.\pvs
    \begin{center}
     \begin{tikzpicture}
\node (ST1) at (0,0){$\ST{(3\a,1,4(\g+1)+k_3)}$};
	\node (NS1i) at (-6,-1.5){$\NS{(3(\a-1)+1,2,4(\g+1)+k_3)}$};
		\node [label={[label distance=1mm]270:B.2.MA\tsub{1}}](ST2i) at (-4,-3){$\ST{(3(\a-1),1,4(\g+1)+1+k_3)}$};
    \node (NS1ii) at (-.25,-1.5){$\NS{(3(\a-1)+2,0,4(\g+1)+1+k_3)}$};
	\node (NS1iii) at (5,-1.5){$\NS{(3\a,0,4\g+3+k_3)}$};
        \node (ST2ii) at (5,-3){$\ST{(3\a+1,0,4\g+1+k_3)}$};
            \node (NS2i) at (5,-4.5){$\NS{(3\a+2,0,4(\g-1)+3+k_3)}$};
                \node [label={[label distance=1mm]270:B.3.S\tsub{3}}](ST3i) at (5,-6){$\ST{(3(\a+1),0,4(\g-1)-3+k_3)}$};
                \node [label={[label distance=1mm]270:B.3.M}](ST3ii) at (-2,-6){$\ST{(3\a,1,4(\g-1)+3+k_3)}$};
            \draw  (ST1) -- node[left=3mm] {M}(NS1i);
            \draw  (ST1) -- node[right] {A\tsub{1}}(NS1ii);
            \draw  (ST1) -- node[right=3mm] {A\tsub{2}}(NS1iii);
            \draw  (NS1i) -- node[right] {A\tsub{1}}(ST2i);
            \draw  (NS1ii) -- node[left=3mm] {M}(ST2i);
            \draw  (NS1iii) -- node[right] {S\tsub{3}}(ST2ii);
            \draw  (ST2ii) -- node[right] {S\tsub{3}}(NS2i);
            \draw  (NS2i) -- node[right] {S\tsub{3}}(ST3i);
            \draw  (NS2i) -- node[left=5mm] {M}(ST3ii);
        \end{tikzpicture}
    \end{center}\pvs
While it is not immediately clear that B.2.MA\tsub{2} is a $P$ position, note that the possible moves from here are the same as the possibilities from $(3\a,1,4(\g+1))$ and that the necessary inequalities still hold, given that $\a-1<\a$. If Red always chooses to merge or add the first two columns, the game reduces to Blue placing $(0,1,4(\g+1)+k_3+1)$, which is a $P$ position for all $4(\g+1)+k_3+1 \neq 0,1,6$ by Corollary \ref{corallc}. Since we require $\g-1 \geq \a \geq 0 \implies \g \geq 1$, we do not need to consider these exceptions. Showing that one of B.3.M and B.3.S\tsub{3} is a $P$ position takes a bit more effort, as which position wins depends on the values of $\a,\g$ and $k_3$.\pvs

First, suppose $k_3 = 0$. Then, $\text{B.3.M} = (3\a,1,4(\gamma-1)+3)$ and $\text{B.3.S\tsub{3}} = (3(\alpha+1),0,4(\gamma-2)+1)$. B.3.M is a $P$ position by inductive hypothesis for all $\alpha \leq (\gamma-1)-2$, and we assumed $\alpha \leq \gamma-1$. Therefore, we only need to show that $(3\a,1,4(\g+1))$ is also a $P$ position when $\g=\a+1$ and $\g = \a+2$. Note that when $\gamma = \alpha +1$, $\text{B.3.S\tsub{3}} = (3(\alpha+1),0,4(\alpha-1)+1)$ which is a $P$ position by Corollary \ref{cor: nca>c} Part \ref{cor:3k1}. Similarly, when  $\gamma = \alpha + 2$, $\text{B.3.S\tsub{3}} = (3(\alpha+1),0,4\alpha+1)$, which is a $P$ position by Corollary \ref{cor: nca>c} Part \ref{cor:3k1}.\pvs
Next, suppose $k_3 = 1$, implying that $\text{B.3.M} = (3\alpha,1,4\g)$. $\text{B.3.M}$ is a $P$ position by inductive hypothesis for all $\alpha \leq \g-2$,and since we let $\a \leq \g-1$ so we only need to consider when $\g=\a+1$. We consider this in the game tree below. 
         \begin{center}
    \begin{tikzpicture}
\node (ST1) at (0,0){$\ST{(3\a,1,4(\a+1))}$};
	\node (NS1i) at (-5,-1.5){$\NS{(3(\a-1)+1, 2, 4(\a+1))}$};
        \node [label={[label distance=1mm]270:B.2.MA\tsub{1}}](ST2i) at (-2,-3){$\ST{(3(\a-1), 1,4(\a+1)+1)}$};
	\node (NS1ii) at (0,-1.5){$\NS{(3(\a-1)+2, 0,4(\a+1)+1)}$};
    \node (NS1iii) at (5,-1.5){$\NS{(3\a, 0,4\a+3)}$};
        \node [label={[label distance=1mm]270:B.2.S\tsub{3}}](ST2ii) at (5,-3){$\ST{(3\a+1, 0,4\a+1)}$};

            \draw  (ST1) -- node[left=6mm] {M}(NS1i);
            \draw  (ST1) -- node[right] {A\tsub{1}}(NS1ii);
            \draw  (ST1) -- node[right=6mm] {A\tsub{2}}(NS1iii);
            \draw  (NS1i) -- node[left=3mm] {A\tsub{1}}(ST2i);
            \draw  (NS1ii) -- node[right=3mm] {M}(ST2i);
            \draw  (NS1iii) -- node[right] {S\tsub{3}}(ST2ii);
        \end{tikzpicture}
\end{center}\pvs
B.2.MA\tsub{2} is a $P$ position by the inductive hypothesis and B.2.S\tsub{3} is a $P$ position by Corollary \ref{cor: nca>c} Part \ref{cor:3k1}.\pvs
Then, suppose $k_3=2$, implying that $\text{B.3.M} = (3\a,1,4\g+1)$. This is a $P$ position by inductive hypothesis for all $\a \leq \g -2$, and we assume $\a \leq \g -1$, so again, we only need to consider when $\g=\a+1$ as we do in the game tree below. \pvs
  \begin{center}
    \begin{tikzpicture}
\node [label={[label distance=1mm]90:B.3.M}](ST1) at (0,0){$\ST{(3\a, 1, 4(\alpha +1)+1)}$};
	\node (NS1i) at (-5,-1.5){$\NS{(3(\a-1)+1, 2, 4(\a +1)+1)}$};
		\node [label={[label distance=1mm]270:B.4.MA\tsub{1}}](ST2i) at (-5,-3){$\ST{(3(\a-1), 1, 4(\a+1)+2)}$};
    \node (NS1ii) at (.5,-1.5){$\NS{(3(\a-1)+2, 0, 4(\a +1)+2)}$};
	\node (NS1iii) at (5,-1.5){$\NS{(3\a, 0, 4(\a+1))}$};
        \node (ST2ii) at (5,-3){$\ST{(3\a+1, 0, 4\a+2)}$};
            \node (NS2i) at (5,-4.5){$\NS{(3\a+2, 0, 4\a)}$};
                \node [label={[label distance=1mm]270:B.5.M}] (ST3i) at (5,-6){$\ST{(3\a, 1, 4\a)}$};
            \draw  (ST1) -- node[left=5mm] {M}(NS1i);
            \draw  (ST1) -- node[left] {A\tsub{1}}(NS1ii);
            \draw  (ST1) -- node[right=5mm] {A\tsub{2}}(NS1iii);
            \draw  (NS1i) -- node[right] {A\tsub{1}}(ST2i);
            \draw  (NS1ii) -- node[right=5mm] {M}(ST2i);
            \draw  (NS1iii) -- node[right] {S\tsub{3}}(ST2ii);
            \draw  (ST2ii) -- node[right] {S\tsub{3}}(NS2i);
            \draw  (NS2i) -- node[right] {M}(ST3i);
        \end{tikzpicture}
\end{center}\pvs
B.4.MA\tsub{2} is a $P$ position by the inductive hypothesis. We continue the tree from B.5.M in Figure \ref{fig:(3a,1,4a)}.
  \begin{figure}
\begin{center}
    \begin{tikzpicture}
\node [label={[label distance=1mm]90:B.5.M}](ST1) at (0,0){$\ST{(3\a, 1, 4\a)}$};
	\node (NS1i) at (-5.5,-1.5){$\NS{(3(\a-1)+1, 2, 4\a)}$};
		\node (ST2i) at (-2,-3){$\ST{(3(\a-1), 1, 4\a+1)}$};
            \node (NS2i) at (-5.5,-4.5){$\NS{(3(\a-2)+1, 2, 4\a+1)}$};
                \node [label={[label distance=1mm]270:B.7.MA\tsub{1}}] (ST3i) at (-5.5,-6){$\ST{(3(\a-2), 1, 4\a+2)}$};
            \node (NS2ii) at (-.25,-4.5){$\NS{(3(\a-2)+2, 0, 4\a+2)}$};
            \node (NS2iii) at (4,-4.5){$\NS{(3(\a-1), 0, 4\a)}$};
                \node (ST3ii) at (0,-6){$\ST{(3(\a-1)+1, 0, 4(\a-1)+2)}$};
                \node (NS3i) at (0,-7.5){$\NS{(3(\a-1)+2, 0, 4(\a-1))}$};
                \node (ST4i) at (0,-9){$\ST{(3(\a-1), 1, 4(\a-1))}$};
                \node (ST5i) at (0,-10.5){$\ST{. . .}$};
                \node [label={[label distance=1mm]270:B.9.M}](ST6i) at (0,-12){$\ST{(3(\a-r), 1, 4(\a-r))}$};
    \node (NS1ii) at (0,-1.5){$\NS{(3(\a-1)+2, 0, 4\a+1)}$};
	\node (NS1iii) at (5.5,-1.5){$\NS{(3\a, 0, 4(\a-1)+3)}$};
        \node (ST2ii) at (5.5,-3){$\ST{(3\a+1, 0, 4(\a-1)+1)}$};
            \node (NS2iv) at (5.5,-7.5){$\NS{(3\a+2, 0, 4(\a-2)+3)}$};
                \node [label={[label distance=1mm]270:B.7.S\tsub{3}}](ST3iii) at (5.5,-9){$\ST{(3(\a+1), 0, 4(\a-2)+1)}$};
            \draw  (ST1) -- node[left=6mm] {M}(NS1i);
            \draw  (ST1) -- node[right] {A\tsub{1}}(NS1ii);
            \draw  (ST1) -- node[right=6mm] {A\tsub{2}}(NS1iii);
            \draw  (NS1i) -- node[right =4mm] {A\tsub{1}}(ST2i);
            \draw  (NS1ii) -- node[right=3mm] {M}(ST2i);
            \draw  (NS1iii) -- node[right] {S\tsub{3}}(ST2ii);
            \draw  (ST2i) -- node[left=4mm] {M}(NS2i);
            \draw  (ST2i) -- node[right] {A\tsub{1}}(NS2ii);
            \draw  (ST2i) -- node[right=4mm] {A\tsub{2}}(NS2iii);
            \draw  (ST2ii) -- node[right] {S\tsub{3}}(NS2iv);
            \draw  (NS2i) -- node[right] {A\tsub{1}}(ST3i);
            \draw  (NS2ii) -- node[left=5mm] {M}(ST3i);
            \draw  (NS2iii) -- node[right=4mm] {S\tsub{3}}(ST3ii);
            \draw  (NS2iv) -- node[right] {S\tsub{3}}(ST3iii);
            \draw  (ST3ii) -- node[right] {S\tsub{3}}(NS3i);
            \draw  (NS3i) -- node[right] {M}(ST4i);
            \draw  (ST4i) -- (ST5i);
            \draw  (ST5i) -- (ST6i);
        \end{tikzpicture}
\end{center}
 \caption{Game Tree for the Game State $(3\a,1,4\a)$}
\label{fig:(3a,1,4a)}
  \end{figure}\pvs
In Figure \ref{fig:(3a,1,4a)}, B.7.MA\tsub{1} is a $P$ position by inductive hypothesis and B.7.S\tsub{3} is a $P$ position by Corollary \ref{cor: nca>c} Part \ref{cor:3k1}. Otherwise, the game reduces to $(3(\a-r),1,4(\a-r))$ at B.9.M, so the game tree repeats for $r$ rounds, until Blue places a state which we have shown to win by our inductive hypothesis, wins by Corollary \ref{cor: nca>c} Part \ref{cor:3k1}, or places $(0,1,0)$ which trivially wins. This concludes our proof of the statement that $(3\a,1,4\g+2)$ is a $P$ position for all $\a \leq \g-2$.\pvs
Lastly, we consider when $k_3= 3$ so that $\text{B.3.M} = (3\a,1,4\g+2))$. By the inductive hypothesis, this is a $P$ position for all $\a \leq \g-2$, and since we let $\a \leq \g-1$, we again must only consider $\g = \a +1$. Here, $\text{B.3.S\tsub{3}} = (3(\a+1),0,4\a)$ which is a $P$ position by Lemma \ref{lemma:(3,0,4)}.\pvs
Thus, Player $2$ has a winning strategy for $(3\a,1,4\g+k_3)$ for all $\a \leq \g -2$.
\end{proof}\pvs
\begin{corollary}\label{cor:nc010}
    Let $(a,b,c)$ be a game state for an $F_4$ Black Hole Zeckendorf game, with $\a,\g \in \Znn$. For all $\a=\g$, $(3\a,1,4\g)$ is a $P$ position.
\end{corollary}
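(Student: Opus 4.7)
The plan is to induct on $\alpha$, exploiting the game tree in Figure \ref{fig:(3a,1,4a)} that was already built up in the proof of Lemma \ref{lemma:stk0}. That figure's root, labeled B.5.M, is precisely $(3\alpha, 1, 4\alpha)$, so essentially all of the structural analysis needed is already present; the corollary just packages the diagonal case $\alpha = \gamma$ that fell outside the lemma's hypothesis $\alpha \leq \gamma - 2$.

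The base case is $\alpha = 0$: the position $(0,1,0)$ admits no legal move and is trivially a $P$ position. A separate hand-check is appropriate for $\alpha = 1$, since several nodes in Figure \ref{fig:(3a,1,4a)} refer to states like $(3(\alpha-2), 1, 4\alpha+2)$ which are degenerate when $\alpha < 2$; from $(3,1,4)$ each of Red's moves M, A1, A2, S3 is met by a Blue response landing in a $P$ position known from Corollaries \ref{coralla} and \ref{corallc}. For the inductive step $\alpha \geq 2$, assume $(3(\alpha-1), 1, 4(\alpha-1))$ is $P$ and walk through Figure \ref{fig:(3a,1,4a)}: Red's moves M, A1, A2 are answered so that the game reaches either $(3(\alpha-2), 1, 4\alpha+2)$, a $P$ position by Lemma \ref{lemma:stk0} with parameters $\alpha_{\text{lem}} = \alpha - 2$, $\gamma_{\text{lem}} = \alpha$, $k_3 = 2$; or $(3(\alpha+1), 0, 4(\alpha-2)+1)$, a $P$ position by Corollary \ref{cor: nca>c} Part \ref{cor:3k1}; or $(3(\alpha-1), 1, 4(\alpha-1))$, a $P$ position by the inductive hypothesis. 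Red's move S3 leads to $(3\alpha+1, 1, 4(\alpha-1)+2)$, which has $k_1 = 1$ and is thus an $N$ position by Lemma \ref{lemma:nck12}, meaning Blue, as the next mover, wins.

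The main obstacle is bookkeeping rather than anything conceptually novel: each branch of Figure \ref{fig:(3a,1,4a)} must be checked to confirm that Blue indeed has a response landing in a state whose $P$ status is established, and the side conditions in Lemma \ref{lemma:stk0} and Corollary \ref{cor: nca>c} must be verified along the way. The only delicate spot is the small-$\alpha$ degeneracy, which is dispatched by the direct verification at $\alpha = 1$ above.
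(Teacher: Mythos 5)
Your proof is correct and follows essentially the same route as the paper: the paper's own proof of this corollary simply observes that the analysis of $(3\alpha,1,4\alpha)$ carried out in Figure \ref{fig:(3a,1,4a)} inside the proof of Lemma \ref{lemma:stk0} becomes unconditional once that lemma is established, and that is exactly the tree you walk through, with the same supporting citations (Lemma \ref{lemma:stk0} at the node $(3(\alpha-2),1,4\alpha+2)$, Corollary \ref{cor: nca>c} Part \ref{cor:3k1} at $(3(\alpha+1),0,4(\alpha-2)+1)$, and Lemma \ref{lemma:nck12} for Red's omitted S\tsub{3} moves). Your repackaging as an explicit induction on $\alpha$, with hand-checked base cases $\alpha=0,1$ and the inductive hypothesis replacing the paper's ``repeat for $r$ rounds'' node, is just a slightly more self-contained write-up of the same argument.
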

\begin{proof}
We showed in Lemma \ref{lemma:stk0} that $(3\a,1,4\a)$ is a $P$ position under the assumption that $(3\a,1,4\g+k_3)$ is a $P$ position for all $\a \leq \g -2$. Since we proved that Lemma \ref{lemma:stk0} is in fact true, then it follows that $(3\a,1,4\g)$ is a $P$ position for all $\a = \g$.
\end{proof}\pvs
We now know that $(3\a, 1, 4 \g +k_3)$ is a $P$ position for all $\a \leq \g -2$, and from Corollary \ref{cor:nck0} and Corollary \ref{cor: nca>c+2}, we know that $(3\a,1,4\g + k_3)$ is a $P$ position for all $\a \geq \g +3$. By considering each $k_3$ individually, we determine the which positions are $P$ and $N$ for $(3 \a, 1, 4 \g + k_3)$ for any $\a, \g, k_3 \in \Znn$ such that $0 \leq k_3 \leq 3$. \pvs
\begin{thm} \label{thm:wink1=0} Let $(a,b,c)$ be a game state for an $F_4$ Black Hole Zeckendorf Game with $\a,\g \in \Znn$.
\begin{enumerate}
    \item $(3\a,1,4\g)$ is a $P$ position for all $\a\leq \g$ and $(3\a,1,4\g)$ is an $N$ position for all $\a\geq \g +1$. \label{wink2=0}
    \item $(3\a,1,4\g+1)$ is a $P$ position for all $\a\leq \g-1$ and $(3\a,1,4\g+1)$ is an $N$ position for all $\a\geq \g$. \label{wink2=1}
    \item $(3\a,1,4\g+2)$ is a $P$ position for all $\a\leq \g-2$ and $(3\a,1,4\g+2)$ is an $N$ position for all $\a\geq \g-1$.\label{wink2=2}
    \item $(3\a,1,4\g+3)$ is a $P$ position for all $\a\leq \g+2$ and $(3\a,1,4\g+3)$ is an $N$ position for all $\a\geq \g+3$.\label{wink2=3}
\end{enumerate}
    \begin{proof} We proceed by proving each part of the theorem individually.\\
\vspace{.25cm}\\
        \textbf{Proof when }\bm{$k_3=0$}.\par
        First, we consider when $k_3 = 0$. From Corollary \ref{cor:nck0}, we know that $(3\a,1,4\g)$ is an $N$ position for all $\a\geq \g +1$, and from Lemma $\ref{lemma:stk0}$, we know that $(3\a,1,4\g)$ is a $P$ position for all $\a \leq \g -2$. Moreover, we showed that $(3\a,1,4\a)$ is a $P$ position in Corollary \ref{cor:nc010}. Therefore, it is only necessary to show that $(3\a,1,4\g)$ is a $P$ position for $\g = \a +1$, which we show in the game tree below.
        \begin{center}
    \begin{tikzpicture}
\node (ST1) at (0,0){$\ST{(3\a,1,4(\a+1))}$};
	\node (NS1i) at (-5,-1.5){$\NS{(3(\a-1)+1, 2, 4(\a+1))}$};
        \node [label={[label distance=1mm]270:B.2.MA\tsub{1}}](ST2i) at (-2,-3){$\ST{(3(\a-1), 1,4(\a+1)+1)}$};
	\node (NS1ii) at (0,-1.5){$\NS{(3(\a-1)+2, 0,4(\a+1)+1)}$};
    \node (NS1iii) at (5,-1.5){$\NS{(3\a, 0,4\a+3)}$};
        \node [label={[label distance=1mm]270:B.2.S\tsub{3}}](ST2ii) at (5,-3){$\ST{(3\a+1, 0,4\a+1)}$};

            \draw  (ST1) -- node[left=6mm] {M}(NS1i);
            \draw  (ST1) -- node[right] {A\tsub{1}}(NS1ii);
            \draw  (ST1) -- node[right=6mm] {A\tsub{2}}(NS1iii);
            \draw  (NS1i) -- node[left=3mm] {A\tsub{1}}(ST2i);
            \draw  (NS1ii) -- node[right=3mm] {M}(ST2i);
            \draw  (NS1iii) -- node[right] {S\tsub{3}}(ST2ii);
        \end{tikzpicture}
\end{center}\pvs
B.2.MA\tsub{2} is a $P$ position by Lemma \ref{lemma:stk0} and B.2.S\tsub{3} is a $P$ position by Corollary \ref{cor: nca>c} Part \ref{cor:3k1}. Thus, $(3\a,1,4\g)$ is a $P$ position for all $\a\leq \g$ and $(3\a,1,4\g)$ is an $N$ position for all $\a\geq \g +1$.\\
\vspace{.25cm}\\
        \textbf{Proof when }\bm{$k_3=1$}.\par
 Next, we consider when $k_3= 1$. From Corollary \ref{cor:nck0}, we know that $(3\a,1,4\g+1)$ is an $N$ position for all $\a \geq \g$, and from Lemma $\ref{lemma:stk0}$, $(3\a,1,4\g+1)$ is a $P$ position for all $\a \leq \g -2$. Thus, we only need to consider $\g = \a +1$, which we do in the game tree below.
   \begin{center}
    \begin{tikzpicture}
\node (ST1) at (0,0){$\ST{(3\a,1,4(\a+1)+1)}$};
	\node (NS1i) at (-5,-1.5){$\NS{(3(\a-1)+1, 2, 4(\a+1)+1)}$};
        \node [label={[label distance=1mm]270:B.2.MA\tsub{1}}](ST2i) at (-2,-3){$\ST{(3(\a-1), 1,4(\a+1)+2)}$};
	\node (NS1ii) at (.5,-1.5){$\NS{(3(\a-1)+2, 0,4(\a+1)+2)}$};
    \node (NS1iii) at (5,-1.5){$\NS{(3\a, 0,4(\a+1))}$};
        \node (ST2ii) at (5,-3){$\ST{(3\a+1, 0,4\a+2)}$};
            \node (NS2i) at (5,-4.5){$\NS{(3\a+2, 0,4\a)}$};
                \node [label={[label distance=1mm]270:B.3.M}](ST3i) at (5,-6){$\ST{(3\a, 1,4\a)}$};
            \draw  (ST1) -- node[left=5mm] {M}(NS1i);
            \draw  (ST1) -- node[right] {A\tsub{1}}(NS1ii);
            \draw  (ST1) -- node[right=3mm] {A\tsub{2}}(NS1iii);
            \draw  (NS1i) -- node[left=2mm] {A\tsub{1}}(ST2i);
            \draw  (NS1ii) -- node[right=2mm] {M}(ST2i);
            \draw  (NS1iii) -- node[right] {S\tsub{3}}(ST2ii);
            \draw  (ST2ii) -- node[right] {S\tsub{3}}(NS2i);
            \draw  (NS2i) -- node[right] {M}(ST3i);
        \end{tikzpicture}
\end{center}\pvs
B.2.MA\tsub{1} is a $P$ position by Lemma \ref{lemma:stk0} and B.3.M is a $P$ position by Corollary \ref{cor:nc010}. Hence, $(3\a,1,4\g+1)$ is a $P$ position for all $\a\leq \g-1$ and $(3\a,1,4\g+1)$ is an $N$ position for all $\a\geq \g$.\\
\vspace{.25cm}\\
        \textbf{Proof when }\bm{$k_3=2$}.\par
Then, we consider $k_3=2$. From Corollary \ref{cor: nca>c+2}, we know that $(3\a,1,4\g+2)$ is an $N$ position for all $\a \geq \g$, and from Lemma $\ref{lemma:stk0}$, $(3\a,1,4\g+2)$ is a $P$ position for all $\a \leq \g -2$, so again, we only need to consider when $\g = \a +1$, as we do in the game play tree below.
   \begin{center}
    \begin{tikzpicture}
\node (ST1) at (0,0){$\ST{(3\a,1,4(\a+1)+2)}$};
    \node (NS1i) at (0,-1.5){$\NS{(3\a, 0,4(\a+1)+1)}$};
        \node (ST2i) at (0,-3){$\ST{(3\a+1, 0,4\a+3)}$};
            \node [label={[label distance=1mm]270:R.2.S\tsub{3}}] (NS2i) at (0,-4.5){$\NS{(3\a+2, 0,4\a+1)}$};
            \draw  (ST1) -- node[right] {A\tsub{2}}(NS1i);
            \draw  (NS1i) -- node[right] {S\tsub{3}}(ST2i);
            \draw  (ST2i) -- node[right] {S\tsub{3}}(NS2i);
        \end{tikzpicture}
\end{center}\pvs
R.2.S\tsub{3} is an $N$ position by Corollary \ref{cor: nca>c} Part \ref{cor:3k1}, so $(3\a,1,4\g+2)$ is a $P$ position for all $\a \leq \g -2$ and $(3\a,1,4\g+2)$ is an $N$ position for all $\a \geq \g -1$.\\
\vspace{.25cm}\\
\textbf{Proof when }\bm{$k_3=3$}.\par
Finally, we consider when $k_3 = 3$. By Lemma \ref{lemma:stk0}, $(3\a,1,4\g+3)$ is a $P$ position for all $\a \leq \g -2$ and by Corollary \ref{cor: nca>c+2}, $(3\a,1,4\g+3)$ is an $N$ position for all $\a \geq \g +3$. Thus, we must consider when $\a-2 \leq \g \leq \a + 1$. We begin with the game tree such that $\g= \a-2$.
\begin{center}
    \begin{tikzpicture}
\node (ST1) at (0,0){$\ST{(3\a,1,4(\a-2)+3)}$};
	\node (NS1i) at (-5,-1.5){$\NS{(3(\a-1)+1, 2, 4(\a-2)+3)}$};
        \node [label={[label distance=1mm]270:B.2.MA\tsub{1}}](ST2i) at (-2,-3){$\ST{(3(\a-1), 1,4(\a-1))}$};
	\node (NS1ii) at (.5,-1.5){$\NS{(3(\a-1)+2, 0,4(\a-1))}$};
    \node (NS1iii) at (5,-1.5){$\NS{(3\a, 0,4(\a-2)+2)}$};
        \node [label={[label distance=1mm]270:B.2.S\tsub{3}}](ST2ii) at (5,-3){$\ST{(3\a+1, 0,4(\a-2))}$};

            \draw  (ST1) -- node[left=5mm] {M}(NS1i);
            \draw  (ST1) -- node[right] {A\tsub{1}}(NS1ii);
            \draw  (ST1) -- node[right=5mm] {A\tsub{2}}(NS1iii);
            \draw  (NS1i) -- node[left=2mm] {A\tsub{1}}(ST2i);
            \draw  (NS1ii) -- node[right=2mm] {M}(ST2i);
            \draw  (NS1iii) -- node[right] {S\tsub{3}}(ST2ii);
        \end{tikzpicture}
\end{center}\pvs
B.2.MA\tsub{1} is a $P$ position by Corollary \ref{cor:nc010}. Additionally, B.2.S\tsub{3} is a $P$ position by Corollary \ref{cor: nca>c} Part \ref{cor:3k0}. Next, we consider the game tree such that $\g = \a - 1$.
\begin{center}
    \begin{tikzpicture}
\node (ST1) at (0,0){$\ST{(3\a,1,4(\a-1)+3)}$};
	\node (NS1i) at (-5,-1.5){$\NS{(3(\a-1)+1, 2, 4(\a-1)+3)}$};
        \node [label={[label distance=1mm]270:B.2.MA\tsub{1}}](ST2i) at (-2,-3){$\ST{(3(\a-1), 1,4\a)}$};
	\node (NS1ii) at (.5,-1.5){$\NS{(3(\a-1)+2, 0,4\a)}$};
    \node (NS1iii) at (5,-1.5){$\NS{(3\a, 0,4(\a-1)+2)}$};
        \node [label={[label distance=1mm]270:B.2.S\tsub{3}}](ST2ii) at (5,-3){$\ST{(3\a+1, 0,4(\a-1))}$};

            \draw  (ST1) -- node[left=5mm] {M}(NS1i);
            \draw  (ST1) -- node[right] {A\tsub{1}}(NS1ii);
            \draw  (ST1) -- node[right=5mm] {A\tsub{2}}(NS1iii);
            \draw  (NS1i) -- node[left=2mm] {A\tsub{1}}(ST2i);
            \draw  (NS1ii) -- node[right=2mm] {M}(ST2i);
            \draw  (NS1iii) -- node[right] {S\tsub{3}}(ST2ii);
        \end{tikzpicture}
\end{center}\pvs
B.2.MA\tsub{1} is a $P$ position by the proof of Part \ref{wink2=0} of this Theorem. Additionally, B.2.S\tsub{3} is a $P$ position by Corollary \ref{cor: nca>c} Part \ref{cor:3k0}. Then, we continue to $\g=\a$.
\begin{center}
    \begin{tikzpicture}
\node (ST1) at (0,0){$\ST{(3\a,1,4\a+3)}$};
	\node (NS1i) at (-5,-1.5){$\NS{(3(\a-1)+1, 2, 4\a+3)}$};
        \node [label={[label distance=1mm]270:B.2.MA\tsub{1}}](ST2i) at (-2,-3){$\ST{(3(\a-1), 1,4(\a+1))}$};
	\node (NS1ii) at (.5,-1.5){$\NS{(3(\a-1)+2, 0,4(\a+1))}$};
    \node (NS1iii) at (5,-1.5){$\NS{(3\a, 0,4\a+2)}$};
        \node (ST2ii) at (5,-3){$\ST{(3\a+1, 0,4\a)}$};
            \node(NS2i) at (5,-4.5){$\NS{(3\a+2, 0,4(\a-1)+2)}$};
                \node [label={[label distance=1mm]270:B.3.S\tsub{3}}](ST3i)at (5,-6){$\ST{(3(\a+1), 0,4(\a-1))}$};
            \draw  (ST1) -- node[left=5mm] {M}(NS1i);
            \draw  (ST1) -- node[right] {A\tsub{1}}(NS1ii);
            \draw  (ST1) -- node[right=5mm] {A\tsub{2}}(NS1iii);
            \draw  (NS1i) -- node[left=2mm] {A\tsub{1}}(ST2i);
            \draw  (NS1ii) -- node[right=2mm] {M}(ST2i);
            \draw  (NS1iii) -- node[right] {S\tsub{3}}(ST2ii);
            \draw  (ST2ii) -- node[right] {S\tsub{3}}(NS2i);
            \draw  (NS2i) -- node[right] {S\tsub{3}}(ST3i);
        \end{tikzpicture}
\end{center}\pvs
When $\a=\g=0$, the game starts at $(0,1,3)$ which is a $P$ position by Corollary \ref{corallc}. For $\a \in \Zp$ B.2.MA\tsub{1} is a $P$ position by Lemma \ref{lemma:stk0} and B.3.S\tsub{3} is a $P$ position by Corollary \ref{cor: nca>c} Part \ref{cor:3k0}. Then, we conclude with the case where $\g= \a +1$, which is essentially identical to the case for $\g = \a$.
\begin{center}
    \begin{tikzpicture}
\node (ST1) at (0,0){$\ST{(3\a,1,4(\a+1)+3)}$};
	\node (NS1i) at (-5,-1.5){$\NS{(3(\a-1)+1, 2, 4(\a+1)+3)}$};
        \node [label={[label distance=1mm]270:B.2.MA\tsub{1}}](ST2i) at (-2,-3){$\ST{(3(\a-1), 1,4(\a+2))}$};
	\node (NS1ii) at (.5,-1.5){$\NS{(3(\a-1)+2, 0,4(\a+2))}$};
    \node (NS1iii) at (5,-1.5){$\NS{(3\a, 0,4(\a+1)+2)}$};
        \node (ST2ii) at (5,-3){$\ST{(3\a+1, 0,4(\a+1))}$};
            \node(NS2i) at (5,-4.5){$\NS{(3\a+2, 0,4\a+2)}$};
                \node [label={[label distance=1mm]270:B.3.S\tsub{3}}](ST3i)at (5,-6){$\ST{(3(\a+1), 0,4\a)}$};
            \draw  (ST1) -- node[left=5mm] {M}(NS1i);
            \draw  (ST1) -- node[right] {A\tsub{1}}(NS1ii);
            \draw  (ST1) -- node[right=5mm] {A\tsub{2}}(NS1iii);
            \draw  (NS1i) -- node[left=2mm] {A\tsub{1}}(ST2i);
            \draw  (NS1ii) -- node[right=2mm] {M}(ST2i);
            \draw  (NS1iii) -- node[right] {S\tsub{3}}(ST2ii);
            \draw  (ST2ii) -- node[right] {S\tsub{3}}(NS2i);
            \draw  (NS2i) -- node[right] {S\tsub{3}} (ST3i);
        \end{tikzpicture}
\end{center}\pvs
Again, B.2.MA\tsub{1} is a $P$ position by Lemma \ref{lemma:stk0} and B.3.S\tsub{3} is a $P$ position by Corollary \ref{cor: nca>c} Part \ref{cor:3k0}. \pvs

This concludes our proof of Theorem \ref{thm:wink1=0}
    \end{proof}
\end{thm}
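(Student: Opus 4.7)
The plan is to split the theorem into its four cases by the value of $k_3$, and for each case use Lemma \ref{lemma:stk0}, Corollary \ref{cor:nck0}, Corollary \ref{cor: nca>c+2}, and Corollary \ref{cor:nc010} to cover the ``far'' ranges of $\a$ relative to $\g$, then handle the few remaining ``boundary'' values of $\a$ by exhibiting short game trees that reduce the given position to a $P$ position already identified in earlier results.

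More concretely, for each fixed $k_3$ I would first tabulate what is already known: Lemma \ref{lemma:stk0} gives $(3\a,1,4\g+k_3) \in P$ for all $\a \le \g - 2$; Corollary \ref{cor:nck0} gives $N$ for $\a \ge \g+1$ when $k_3 = 0$ and $\a \ge \g$ when $k_3 = 1$; Corollary \ref{cor: nca>c+2} gives $N$ for $\a \ge \g$ when $k_3 = 2$ and $\a \ge \g + 3$ when $k_3 = 3$; and Corollary \ref{cor:nc010} gives $P$ at $\a = \g$ for $k_3 = 0$. Subtracting these ranges from the claimed $P$-regions leaves a small gap: for $k_3 = 0, 1, 2$ the only case to check is $\g = \a + 1$, and for $k_3 = 3$ there are four cases, $\g \in \{\a-2,\a-1,\a,\a+1\}$. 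In each boundary case I would draw the game tree rooted at $(3\a,1,4\g+k_3)$, enumerate Red's three candidate moves (M, A\tsub{1}, A\tsub{2}; the move that creates $(3\a{-}1,\cdot,\cdot,\cdot)$ with $k_1 = 1,2$ can be suppressed by the remark after Lemma \ref{lemma:nck12}), and show that Blue can always respond to a $P$ position already in hand — typically by applying A\tsub{1}/M (combining the two left-hand responses into the same Blue state $(3(\a-1),1,4\g+k_3+1)$), and using S\tsub{3} iteratively on the A\tsub{2} branch until reaching a state of the form $(3\a'{+}1, 0, 4\g'{+}1)$ or $(3\a',1,4\g')$, which is $P$ by Corollary \ref{cor: nca>c} Part~\ref{cor:3k1} or by the already-proved subcase of this theorem.

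The proof should be organized sequentially so that each subcase can be used as a lemma for the next: proving Part~\ref{wink2=0} ($k_3=0$) first, then using it in Part~\ref{wink2=1} ($k_3=1$, where the A\tsub{2} branch bottoms out at $(3\a,1,4\a)$, a $P$ position by Corollary \ref{cor:nc010}), then Part~\ref{wink2=2} ($k_3=2$, where the A\tsub{2} branch ends at $(3\a{+}2, 0, 4\a{+}1)$, an $N$ position by Corollary \ref{cor: nca>c} Part~\ref{cor:3k1}, so Red's A\tsub{2} is refuted and the full check reduces essentially to a one-move reply), and finally Part~\ref{wink2=3} ($k_3=3$), where the game trees for $\g = \a-1$ and $\g = \a-2$ bottom out at positions of the form $(3\a',1,4\g')$ with $\g' \in \{\a', \a'-1\}$ handled by the preceding parts, while the trees for $\g = \a$ and $\g = \a+1$ require an extra S\tsub{3}-step to reach $(3(\a+1),0,4\g')$ and invoke Corollary \ref{cor: nca>c} Part~\ref{cor:3k0}.

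The main obstacle is the $k_3 = 3$ case. Because the $N$ region only starts at $\a \ge \g+3$, there is a \emph{two-unit} overlap beyond the diagonal where the position is still $P$, whereas for $k_3 = 0, 1, 2$ the $P$-region ends essentially at the diagonal. This asymmetry forces four separate boundary game trees rather than one, and in the $\g = \a$ and $\g = \a+1$ trees one must verify that none of Red's three moves escape: the A\tsub{1} and M branches merge into a $P$ position by Lemma \ref{lemma:stk0} (or by a previously proved part of the theorem when the induction on $\a$ is shallow), while the A\tsub{2}/S\tsub{3}/S\tsub{3} chain terminates in a Corollary \ref{cor: nca>c} Part~\ref{cor:3k0}-type $P$ position. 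Once these trees are drawn, the small initial case $\a=\g=0$ (i.e.\ the position $(0,1,3)$) must be handled separately via Corollary \ref{corallc} to avoid dividing by an empty column.
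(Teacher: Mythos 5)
Your plan for Parts \ref{wink2=0}, \ref{wink2=1} and \ref{wink2=3} is essentially the paper's proof: the same reduction to the boundary cases $\g = \a+1$ (and $\g \in \{\a-2,\a-1,\a,\a+1\}$ when $k_3=3$), the same merging of the M and A\tsub{1} branches into the single Blue reply $(3(\a-1),1,4\g+k_3+1)$ handled by Lemma \ref{lemma:stk0} or an earlier part, the same terminal appeals to Corollary \ref{cor: nca>c} and Corollary \ref{cor:nc010}, the same suppression of $k_1=1,2$ moves via the remark after Lemma \ref{lemma:nck12}, and the same special case $(0,1,3)$ via Corollary \ref{corallc}. (One small slip: for $\g=\a-1$ the merged branch bottoms out at $(3\a',1,4\g')$ with $\g'=\a'+1$, not $\g'\in\{\a',\a'-1\}$; a position with $\g'=\a'-1$ would in fact be an $N$ position by Part \ref{wink2=0}.)

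However, your treatment of Part \ref{wink2=2} ($k_3=2$) contains a genuine error of orientation. The claimed $P$-region there, $\a \leq \g-2$, is already fully covered by Lemma \ref{lemma:stk0}; the leftover case $\a=\g-1$ lies in the claimed $N$-region, so what must be shown is that the \emph{next} player (Red) wins from $(3\a,1,4(\a+1)+2)$, not that Red's moves can all be refuted. Your own chain accomplishes this once read correctly: Red plays A\tsub{2} to $(3\a,0,4(\a+1)+1)$; if Blue splits, Red splits again and places $(3\a+2,0,4\a+1)$, which is a $P$ position (not an $N$ position) by Corollary \ref{cor: nca>c} Part \ref{cor:3k1} since $\a\geq\a$, and it is Red who places it; Blue's only other reply, merging, creates $(3(\a-1)+1,1,4(\a+1)+1)$, which loses by Lemma \ref{lemma:nck12}. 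Concluding instead that ``Red's A\tsub{2} is refuted'' inverts the logic and would amount to asserting that $(3\a,1,4\g+2)$ is a $P$ position at $\a=\g-1$, contradicting the very statement you are proving (and your citation of Part \ref{cor:3k1} for an ``$N$ position'' misreads that corollary, whose parts \ref{cor:3k0}--\ref{cor:3k1} list $P$ positions). With the roles of the two players in this branch corrected, your argument coincides with the paper's.
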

From here, we can begin to prove some of the statements claimed in Figure \ref{fig:winners(a,0,c)}. For a solution to be truly constructive, we want a path from any game position to a $P$ position. In Lemma \ref{lemma:B=0} we prove that certain states are indeed $P$ positions below, and then provide constructive strategies from $P$ positions in Theorem \ref{thm:B=0}.\pvs
\begin{lemma}\label{lemma:B=0}
     Let $(a,0,c)$ be a game state for an the $F_4$ Black Hole Zeckendorf game $\a,\g\in \Znn$. The following positions are $P$ positions
         \begin{enumerate}
             \item $(3\a,0,4\g)$ such that $\a \geq \g$, and \label{lemma:00}
              \item $(3\a+1,0,4\g)$ for all $\a,\g$. \label{lemma:10}
         \end{enumerate}
\end{lemma}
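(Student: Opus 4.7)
The plan is to dispatch Part 1 immediately by appeal to Lemma \ref{lemma:(3,0,4)}, which already established exactly this statement. The substantive work is in Part 2, which I will prove by induction on $\g$ with $\a$ ranging freely over $\Znn$. Note that since Corollary \ref{cor: nca>c} Part \ref{cor:3k0} (with $k_1=1$) already handles the subrange $\a \geq \g+1$, the content of Part 2 is really the complementary statement for small $\a$, but a single induction covering all $\a$ is cleaner.

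The base case $\g = 0$ is handled by Theorem \ref{thrm: allina}: since $3\a+1 \neq 2$ for every $\a \in \Znn$, the position $(3\a+1, 0, 0)$ is a $P$ position. For the inductive step, assume $(3\a+1, 0, 4\g')$ is a $P$ position for every $\a \in \Znn$ and every $\g' \leq \g$, and consider $(3\a+1, 0, 4(\g+1))$. Because $b = 0$, the only legal moves are Merge (requiring $\a \geq 1$), which produces $(3(\a-1)+2,\, 1,\, 4(\g+1))$, and Split in $F_3$, which produces $(3\a+2,\, 0,\, 4\g+2)$. The Merge output is an $N$ position by direct citation to Lemma \ref{lemma:nck12} with $k_1 = 2$ and $k_3 = 0$, so the remaining step is to show that $(3\a+2,\, 0,\, 4\g+2)$ is also an $N$ position.

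The key observation is that from $(3\a+2, 0, 4\g+2)$ the responder has two productive options: Merge to $(3\a,\, 1,\, 4\g+2)$, which is a $P$ position by Theorem \ref{thm:wink1=0} Part \ref{wink2=2} exactly when $\a \leq \g-2$; and Split in $F_3$ to $(3(\a+1),\, 0,\, 4\g)$, which is a $P$ position by Lemma \ref{lemma:(3,0,4)} exactly when $\a + 1 \geq \g$, i.e., $\a \geq \g-1$. The ranges $\{\a \leq \g-2\}$ and $\{\a \geq \g-1\}$ together exhaust $\Znn$, so in every case the responder can reach a $P$ position, and $(3\a+2, 0, 4\g+2)$ is confirmed to be $N$. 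Combining this with the Merge analysis, every move from $(3\a+1, 0, 4(\g+1))$ lands on an $N$ position, so the target is $P$.

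The main subtlety, and the step I expect to require the most care, is handling the boundary $\a = 0$, where Merge is not legal from the root position and one must check that the Split-only argument still yields the correct conclusion; the complementary sufficient conditions $\a \leq \g-2$ and $\a \geq \g-1$ do not depend on the responder having multiple options, so this potential gap closes automatically, but I will verify the small cases $\g \in \{0,1\}$ explicitly to confirm that the $P$ positions cited from Lemma \ref{lemma:(3,0,4)} and Theorem \ref{thm:wink1=0} Part \ref{wink2=2} are not vacuous.
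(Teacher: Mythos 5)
Your proof is correct and takes essentially the same route as the paper's: Part 1 is exactly Lemma \ref{lemma:(3,0,4)}, and for Part 2 the paper likewise discards the root Merge as an immediate loss via Lemma \ref{lemma:nck12} (through its standing remark) and shows the split output $(3\a+2,0,4\g+2)$ is an $N$ position by having the responder merge to $(3\a,1,4\g+2)$ when $\a\le\g-2$ (Theorem \ref{thm:wink1=0} Part \ref{wink2=2}) and otherwise split to $(3(\a+1),0,4\g)$, which covers all $\a$. The only cosmetic difference is that your induction on $\g$ is vacuous—the inductive hypothesis is never invoked, since every step cites previously established results—so the argument can be stated directly with the $\g=0$ case from Theorem \ref{thrm: allina}.
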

\begin{proof} We prove each statement individually.\pvs
\textbf{Proof of Part 1.}  \par
    We first show that $(3\a,0,4\g)$ is a $P$ position for all $\a \geq \g$, noting that when $\a=\g=0$, $(0,0,0)$ is trivially a $P$ position. By Corollary \ref{cor: nca>c} Part \ref{cor:3k0}, $(3\a,0,4\g)$ is a $P$ position for all $\a \geq \g +1$, so it is only necessary to consider the game when $\g= \a$, as we do below.
      \begin{center}
    \begin{tikzpicture}
\node (ST1) at (0,0){$\ST{(3\a,0,4\a)}$};
	\node (NS1i) at (0,-1.5){$\NS{(3\a+1,0,4(\a-1)+2)}$};
    \node [label={[label distance=1mm]270: B.2.S\tsub{3}}](ST2i) at (0,-3){$\ST{(3\a+2,0,4(\a-1))}$};
            \draw  (ST1) -- node[right] {S\tsub{3}}(NS1i);
            \draw  (NS1i) -- node[right] {S\tsub{3}}(ST2i);
        \end{tikzpicture}
\end{center}\pvs
B.2.S\tsub{3} is a $P$ position by Corollary \ref{cor: nca>c} Part \ref{cor:3k0} so $(3\a,0,4\g)$ is a $P$ position for all $\a \geq \g$.\\
\vspace{.25cm}\\
\textbf{Proof of Part 2.} \par
To conclude this lemma, we show that $(3\a+1,0,4\g)$ is a $P$ position for all $\a,\g$, noting that when $\a=\g=0$, $(1,0,0)$ is trivially a $P$ position. From Corollary \ref{cor: nca>c} Part \ref{cor:3k0}, we already know it is a $P$ position for all  $ \a \geq \g+1$. Then, consider the tree below.
\begin{center}
    \begin{tikzpicture}
\node (ST1) at (0,0){$\ST{(3\a+1,0,4\g)}$};
	\node (NS1i) at (0,-1.5){$\NS{(3\a+2,0,4(\g-1)+2)}$};
    \node [label={[label distance=1mm]270: B.2.S\tsub{3}}](ST2i) at (3,-3){$\ST{(3(\a+1),0,4(\g-1))}$};
    \node [label={[label distance=1mm]270: B.2.M}](ST2ii) at (-3,-3){$\ST{(3\a,1,4(\g-1)+2)}$};
            \draw  (ST1) -- node[right] {S\tsub{3}}(NS1i);
            \draw  (NS1i) -- node[right=5mm] {S\tsub{3}}(ST2i);
            \draw  (NS1i) -- node[left=5mm] {M}(ST2ii);
        \end{tikzpicture}
\end{center}\pvs
B.2.M is a $P$ position for all $\a \leq \g-3$ by Theorem \ref{thm:wink1=0} Part \ref{wink2=2} and B.2.S\tsub{3} is a $P$ position for all $\a \geq \g-2$ as established by the proof of Part \ref{lemma:00}. Therefore, $(3\a+1,0,4\g)$ is a $P$ position for all $\a,\g \in \Znn$.
\end{proof}
Lemma \ref{lemma:B=0} is sufficient information for us to now construct a winning strategy from any $P$ position. Here, we aim to show that if a player has placed a $P$ position, they can reduce the game to another $P$ position.
\begin{thm}\label{thm:B=0} 
Let $(a,0,c)=(3\a+k_1,0,4\g+k_3)$ be a board setup for an $F_4$ Black Hole Zeckendorf game. For $\a,\g,k_1,k_3 \in \Znn$, $0 \leq k_1 \leq 2$, and $0 \leq k_3 \leq 3$, the following positions are $P$ positions.
\begin{enumerate}
    \item $c \equiv 1 \pmod 4$: \label{thm:k2=1}
    \begin{enumerate}
        \item $(3\a,0,4\g+1)$ for all $\a \geq \g-1$. \label{thm:st01}
        \item $(3\a+1,0,4\g+1)$ for all $\a,\g$. \label{thm:st11}
        \item $(3\a+2,0,4\g+1)$ for all $\a \geq \g$. \label{thm:st21}
    \end{enumerate}
    \item $c \equiv 2 \pmod 4$: \label{thm:k2=2}
    \begin{enumerate}
        \item $(3\a+1,0,4\g+2)$ for all $\a \leq \g$. \label{thm:st12}
    \end{enumerate}
    \item $c \equiv 3 \pmod 4$:\label{thm:k2=3}
    \begin{enumerate}
        \item $(3\a+1,0,4\g+3)$ for all $\a \leq \g -1$. \label{thm:st13}
    \end{enumerate}
    \item $c \equiv 0 \pmod 4$:\label{thm:k2=0}
        \begin{enumerate}
        \item $(3\a,0,4\g)$ for all $\a \geq \g$. \label{thm:st00}
        \item $(3\a+1,0,4\g)$ for all $\a,\g$. \label{thm:st10}
        \item $(3\a+2,0,4\g)$ for all $\a \geq \g+1$. \label{thm:st20}
    \end{enumerate}
\end{enumerate}
\end{thm}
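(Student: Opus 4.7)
The plan is to establish each of the six $P$-position claims by exhibiting, for every legal opponent move from the claimed position, a response that returns the game to a known $P$ position drawn from Lemma \ref{lemma:B=0}, Theorem \ref{thm:wink1=0}, Corollary \ref{cor: nca>c}, or another part of the current theorem. From any state $(a,0,c)$ the opponent has at most two legal moves, namely merging (requires $a \geq 2$, producing $(a-2,1,c)$) and splitting from $F_3$ (requires $c \geq 2$, producing $(a+1,0,c-2)$), so the case analysis at each claimed $P$ position is short and mechanical.

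Since the six parts are coupled, I would run a simultaneous strong induction on $\g$, proving all claims in parallel within each level. A uniform template handles the bulk of cases: a merge by the opponent is answered by an add move returning $b$ to $0$, and a split is answered by a further split from $F_3$. For instance, in Part \ref{thm:st01} the opponent's merge to $(3(\a-1)+1,1,4\g+1)$ is met by $A_2$ to reach $(3(\a-1)+1,0,4\g)$, a $P$ position by Lemma \ref{lemma:B=0} Part \ref{lemma:10}, and a split to $(3\a+1,0,4(\g-1)+3)$ is met by $S_3$ to reach $(3\a+2,0,4(\g-1)+1)$, a $P$ position by Part \ref{thm:st21} at parameter $\g-1$. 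Parts \ref{thm:st20}, \ref{thm:st21}, \ref{thm:st12}, and \ref{thm:st13} follow the same pattern, typically targeting Lemma \ref{lemma:B=0}, Theorem \ref{thm:wink1=0}, or another part of the present theorem at the same or smaller $\g$.

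The most delicate case is Part \ref{thm:st11}, where no parameter restriction is imposed. An opponent split to $(3\a+2,0,4(\g-1)+3)$ leaves neither available response uniformly winning, so the strategy must branch on the relative size of $\a$ and $\g$. The response $M$ lands at $(3\a,1,4(\g-1)+3)$, which is $P$ by Theorem \ref{thm:wink1=0} Part \ref{wink2=3} precisely when $\a \leq \g+1$, while the response $S_3$ lands at $(3(\a+1),0,4(\g-1)+1)$, which is $P$ by Part \ref{thm:st01} at parameter $\g-1$ precisely when $\a \geq \g-3$. Since every integer $\a$ satisfies at least one of these inequalities, a winning response always exists. A similar dichotomy, with $A_1$ handling $\a \leq \g+1$ and $A_2$ handling $\a \geq \g+2$, resolves the opponent's merge from the same position.

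The principal obstacle will be bookkeeping the dependency graph across parts, together with the boundary cases $\a=0$ (no merge available) and $\g=0$ (no split available). At $\g=0$ the claims reduce either to trivial no-move $P$ positions such as $(0,0,0)$, $(0,0,1)$, and $(1,0,1)$ or to positions already established by Lemma \ref{lemma:B=0}; at $\a=0$ only the split response need be checked and the target lands in Part \ref{thm:st21} or the Lemma. Once these edges are handled and the responses are verified to cite either an externally proven $P$ position or an instance at strictly smaller $\g$, each individual case reduces to a one-line check against the relevant previous result.
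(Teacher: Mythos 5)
Your overall plan is the paper's: at each claimed $P$ position, enumerate the opponent's only two moves (merge, or split from $F_3$) and answer each with a move into a $P$ position already established by Lemma \ref{lemma:B=0}, Theorem \ref{thm:wink1=0}, Corollary \ref{cor: nca>c}, or another part of the theorem at an earlier stage of the induction; your worked branches for Parts \ref{thm:st01} and \ref{thm:st11} are correct, including the $\a$-versus-$\g$ dichotomy for \ref{thm:st11}, which matches the paper's tree up to which prior result is cited. The one concrete misstep is your uniform template ``a split is answered by a further split from $F_3$'': it fails exactly in Parts \ref{thm:st12} and \ref{thm:st13}, which you file under ``same pattern.'' In Part \ref{thm:st12}, after the opponent splits $(3\a+1,0,4\g+2)$ to $(3\a+2,0,4\g)$, replying with another split puts you at $(3(\a+1),0,4(\g-1)+2)$, which is an $N$ position (your opponent splits once more to $(3(\a+1)+1,0,4(\g-1))$, a $P$ position by Lemma \ref{lemma:B=0} Part \ref{lemma:10}); similarly in Part \ref{thm:st13} the double split lands on $(3(\a+1),0,4(\g-1)+3)$, which is $N$ since the opponent can reach $(3(\a+1)+1,0,4(\g-1)+1)$, a $P$ position by Part \ref{thm:st11}. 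The winning reply to the split in these two parts is the merge, into $(3\a,1,4\g)$ (a $P$ position by Theorem \ref{thm:wink1=0} Part \ref{wink2=0} since $\a\le\g$) and $(3\a,1,4\g+1)$ (Part \ref{wink2=1}, since $\a\le\g-1$) respectively --- exactly the paper's moves, and still inside the toolkit you name, but the template as literally stated would send you to a losing position. Two smaller points: the $\g=0$ base cases of Parts \ref{thm:st01}, \ref{thm:st11}, \ref{thm:st21} rest on Corollary \ref{coralla} (every $(a,0,1)$ is $P$), not on Lemma \ref{lemma:B=0} as you suggest; and your intra-theorem citations at the same value of $\g$ (e.g.\ invoking Part \ref{thm:st12} inside Part \ref{thm:st11}) are fine only because those parts can be proved first from Theorem \ref{thm:wink1=0} alone, so the acyclic ordering you promise does exist and should be stated explicitly.
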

\begin{proof}
We work through each section of the theorem, referencing a base case as an example, and then demonstrating the winning strategy. Since we aim to create a constructive solution, we assume that both players are unfamiliar with Lemma \ref{lemma:nck12}, and play out the game when players choose to set the board as $(3\a+1, 1, 4\g+k_3)$ and $(3\a+2, 1, 4\g+k_3)$, although we already know that game state loses. \\
\vspace{.25cm}\\
\textbf{Proof for} $\bm{c \equiv 1\pmod 4}$.\\
\textit{Proof of Part} \ref{thm:st01}:
We first consider the game on $(3\a,0,4\g+1)$ where $\a \geq \g-1$. As base cases, we see that $(0,0,1)$ is trivially a $P$ position and $(0,0,5)$ is a $P$ position by Theorem \ref{thrm: allinc}. Then consider the game tree below.
  \begin{center}
    \begin{tikzpicture}
\node (ST1) at (0,0){$\ST{(3\a,0,4\g+1)}$};
	\node (NS1i) at (3,-1.5){$\NS{(3\a+1,0,4(\g-1)+3)}$};
		\node [label={[label distance=1mm]270: B.2.S\tsub{3}}](ST2i) at (3,-3){$\ST{(3\a+2,0,4(\g-1)+1)}$};
    \node (NS1ii) at (-3,-1.5){$\NS{(3(\a-1)+1,1,4\g+1)}$};
        \node [label={[label distance=1mm]270: B.2.A\tsub{2}}](ST2ii) at (-3,-3){$\ST{(3(\a-1)+1,0,4\g)}$};
            \draw  (ST1) -- node[right=4mm] {S\tsub{3}}(NS1i);
            \draw  (ST1) -- node[left=4mm] {M}(NS1ii);
            \draw  (NS1i) -- node[right] {S\tsub{3}}(ST2i);
            \draw  (NS1ii) -- node[right] {A\tsub{2}}(ST2ii);
        \end{tikzpicture}
\end{center}\pvs
B.2.A\tsub{2} is a $P$ position by Lemma \ref{lemma:B=0} Part \ref{lemma:10} and B.2.S\tsub{3} is a $P$ position by Corollary \ref{cor: nca>c} Part \ref{cor:3k1}, given that $\a \geq \g -1$. Therefore, $(3\a,0,4\g+1)$ is a $P$ position for all $\a \geq \g-1$. \\
\vspace{.25cm}\\
\textit{Proof of Part} \ref{thm:st11}:
Next, we consider the game on $(3\a+1,0,4\g+1)$, noting that as a base case, $(1,0,1)$ is trivially a $P$ position. \pvs
When $\g=0$, the game tree is as follows.
\begin{center}
    \begin{tikzpicture}
\node (ST1) at (0,0){$\ST{(3\a+1,0,1)}$};
	\node (NS1i) at (0,-1.5){$\NS{(3(\a-1)+2,1,1)}$};
		\node [label={[label distance=1mm]270: B.2.A\tsub{2}}](ST2i) at (-2.5,-3){$\ST{(3(\a-1)+2,0,0)}$};
        \node [label={[label distance=1mm]270: B.2.A\tsub{1}}](ST2ii) at (2.5,-3){$\ST{(3(\a-1)+1,0,2)}$};
            \draw  (ST1) -- node[left=3mm] {M}(NS1i);
            \draw  (NS1i) -- node[left=3mm] {A\tsub{2}}(ST2i);
            \draw  (NS1i) -- node[right=3mm] {A\tsub{1}}(ST2ii);
        \end{tikzpicture}
\end{center}\pvs
When $\a > 1$, B.2.A\tsub{2} is a $P$ position by Theorem \ref{thrm: allina}. When $\a=1$, B.2.A\tsub{1}=$(1,0,2)$ which is a $P$ position by Corollary \ref{coralla}.\pvs
We show in the game tree below that $(3\a+1,0,4\g+1)$ is a $P$ position for $\g \geq 1$.
\begin{center}
    \begin{tikzpicture}
\node (ST1) at (0,0){$\ST{(3\a+1,0,4\g+1)}$};
	\node (NS1i) at (-2.5,-1.5){$\NS{(3(\a-1)+2,1,4\g+1)}$};
		\node [label={[label distance=1mm]270: B.2.A\tsub{2}}](ST2i) at (-5,-3){$\ST{(3(\a-1)+2,0,4\g)}$};
        \node [label={[label distance=1mm]270: B.2.MS\tsub{3}}](ST2ii) at (0,-3){$\ST{(3\a,1,4(\g-1)+3)}$};
    \node (NS1ii) at (5,-1.5){$\NS{(3\a+2,0,4(\g-1)+3)}$};
        \node [label={[label distance=1mm]270: B.2.S\tsub{3}}](ST2iii) at (5,-3){$\ST{(3(\a+1),0,4(\g-1)+1)}$};
            \draw  (ST1) -- node[left=3mm] {M}(NS1i);
            \draw  (ST1) -- node[right=5mm] {S\tsub{3}}(NS1ii);
            \draw  (NS1i) -- node[left=3mm] {A\tsub{2}}(ST2i);
            \draw  (NS1i) -- node[right=3mm] {S\tsub{3}}(ST2ii);
            \draw  (NS1ii) -- node[left=5mm] {M}(ST2ii);
            \draw  (NS1ii) -- node[right] {S\tsub{3}}(ST2iii);
        \end{tikzpicture}
\end{center}\pvs
 By Corollary \ref{cor: nca>c} Part \ref{cor:3k0}, B.2.A\tsub{2} is a $P$ position for all $\a \geq \g +2$ and by Corollary \ref{cor: nca>c} Part \ref{cor:3k1}, B.2.S\tsub{3} is a $P$ position for all $\a \geq \g-2$. By Theorem \ref{thm:wink1=0} Part \ref{cor:3k3}, B.2.MS\tsub{3} is a $P$ position for all $\a \leq \g +1$. Thus, $(3\a+1,0,4\g+1)$ is a $P$ position for all $\a,\g \in \Znn$. \\
\vspace{.25cm}\\
\textit{Proof of Part} \ref{thm:st21}:
Next, we consider the game on $(3\a+2,0,4\g+1)$, with $\a \geq \g$, noting as a base case that $(2,0,1)$ is a $P$ position by Corollary \ref{coralla}. Now, consider the game on an arbitrary $(3\a+2,0,4\g+1)$ with $\a \geq \g$.
\begin{center}
    \begin{tikzpicture}
\node (ST1) at (0,0){$\ST{(3\a+2,0,4\g+1)}$};
	\node (NS1i) at (-3,-1.5){$\NS{(3\a,1,4\g+1)}$};
		\node [label={[label distance=1mm]270: B.2.A\tsub{2}}](ST2i) at (-3,-3){$\ST{(3\a,0,4\g)}$};
    \node (NS1ii) at (3,-1.5){$\NS{(3(\a+1),0,4(\g-1)+3)}$};
        \node [label={[label distance=1mm]270: B.2.S\tsub{3}}](ST2ii) at (3,-3){$\ST{(3(\a+1)+1,0,4(\g-1)+1)}$};
            \draw  (ST1) -- node[left=4mm] {M}(NS1i);
            \draw  (ST1) -- node[right=4mm] {S\tsub{3}}(NS1ii);
            \draw  (NS1i) -- node[right] {A\tsub{2}}(ST2i);
            \draw  (NS1ii) -- node[right] {S\tsub{3}}(ST2ii);
        \end{tikzpicture}
\end{center}\pvs
Since, $\a \geq \g$, B.2.A\tsub{2} is a $P$ position as proven in Lemma \ref{lemma:B=0} Part \ref{lemma:00} and B.2.S\tsub{3} is a $P$ position by Corollary \ref{cor: nca>c} Part \ref{cor:3k1}.\\
\vspace{.25cm}\\
\vspace{.25cm}\\
\textbf{Proof for} $\bm{c \equiv 2\pmod 4}$.\\
\textit{Proof of Part} \ref{thm:st12}:
Next, we show that $(3\a+1,0,4\g+2)$ is a $P$ position for all $\a \leq \g$, noting as a base case that $(1,0,2)$ is a $P$ position by Corollary \ref{corallc}. We consider the general case below.
\begin{center}
    \begin{tikzpicture}
\node (ST1) at (0,0){$\ST{(3\a+1,0,4\g+2)}$};
	\node(NS1i) at (-3,-1.5){$\NS{(3(\a-1)+2,1,4\g+2)}$};
        \node [label={[label distance=1mm]270: B.2.S\tsub{3}}](ST2i) at (0,-3){$\ST{(3\a,1,4\g)}$};
    \node (NS1ii) at (3,-1.5){$\NS{(3\a+2,0,4\g)}$};
            \draw  (ST1) -- node[left=4mm] {M}(NS1i);
            \draw  (ST1) -- node[right=4mm] {S\tsub{3}}(NS1ii);
            \draw  (NS1i) -- node[left=4mm] {S\tsub{3}}(ST2i);
            \draw  (NS1ii) -- node[right=4mm] {M}(ST2i);
        \end{tikzpicture}
\end{center}\pvs
We showed that $(3\a,1,4\g)$ is a $P$ position for all $\a \leq \g $ in Theorem \ref{thm:wink1=0} Part \ref{wink2=0}, so $(3\a+1,0,4\g+2)$ is a $P$ position for all $\a \leq \g$.\\
\vspace{.25cm}\\
\textbf{Proof for} $\bm{c \equiv 3\pmod 4}$.\\
\textit{Proof of Part} \ref{thm:st13}:
We show that $(3\a+1,0,4\g+3)$ is a $P$ position for all $\a \leq \g-1$, noting as a base case that $(1,0,7)$ is a $P$ position by Corollary \ref{corallc}. We consider the general case below.
\begin{center}
    \begin{tikzpicture}
\node (ST1) at (0,0){$\ST{(3\a+1,0,4\g+3)}$};
	\node(NS1i) at (3,-1.5){$\NS{(3\a+2,0,4\g+1)}$};
        \node [label={[label distance=1mm]270: B.2.MS\tsub{3}}](ST2i) at (0,-3){$\ST{(3\a,1,4\g+1)}$};
    \node (NS1ii) at (-3,-1.5){$\NS{(3(\a-1)+2,1,4\g+3)}$};
            \draw  (ST1) -- node[right=4mm] {S\tsub{3}}(NS1i);
            \draw  (ST1) -- node[left=4mm] {M}(NS1ii);
            \draw  (NS1i) -- node[right=4mm] {M}(ST2i);
            \draw  (NS1ii) -- node[left=4mm] {S\tsub{3}}(ST2i);
        \end{tikzpicture}
\end{center}\pvs
We showed that $(3\a,1,4\g+1)$ is a $P$ position for all $\a \leq \g -1 $ in Theorem \ref{thm:wink1=0} Part \ref{wink2=1}, so $(3\a+1,0,4\g+3)$ is a $P$ position for all $\a \leq \g-1$.
\vspace{.25cm}\\
\textbf{Proof for} $\bm{c \equiv 0\pmod 4}$.\\
We already proved that a significant portion of Part \ref{thm:k2=0} is true in Lemma \ref{lemma:B=0}. However, these proofs did not consider the possibility of placing $(3\a+1,1,4\a)$ and $(3\a+2,1,4\a)$ so were not fully constructive. Here, we provide a constructive strategy.\\
\vspace{.25cm}\\
\textit{Proof of Part} \ref{thm:st00}:
We first consider the game on $(3\a,0,4\g)$ where $\a \geq \g$. As a base case, see that $(0,0,0)$ is trivially a $P$ position. Then consider the gameplay below.
  \begin{center}
    \begin{tikzpicture}
\node (ST1) at (0,0){$\ST{(3\a,0,4\g)}$};
	\node (NS1i) at (-3,-1.5){$\NS{(3(\a-1)+1,1,4\g)}$};
		\node [label={[label distance=1mm]270: B.2.A\tsub{1}}](ST2i) at (-3,-3){$\ST{(3(\a-1),0,4\g+1)}$};
    \node (NS1ii) at (3,-1.5){$\NS{(3\a+1,0,4(\g-1)+2)}$};
        \node [label={[label distance=1mm]270: B.2.S\tsub{3}}](ST2ii) at (3,-3){$\ST{(3\a+2,0,4(\g-1))}$};
            \draw  (ST1) -- node[left=4mm] {M}(NS1i);
            \draw  (ST1) -- node[right=4mm] {S\tsub{3}}(NS1ii);
            \draw  (NS1i) -- node[right] {A\tsub{1}}(ST2i);
            \draw  (NS1ii) -- node[right] {S\tsub{3}}(ST2ii);
        \end{tikzpicture}
\end{center}\pvs
B.2.A\tsub{1} is a $P$ position by the proof of Part \ref{thm:st01} since $\a \geq \g$. B.2.S\tsub{3} is a $P$ position by Corollary \ref{cor: nca>c} Part \ref{cor:3k0} since $\a \geq \g$. Thus, $(3\a,0,4\g)$ is a $P$ position for all $\a \geq \g$.\\
\vspace{.25cm}\\
\textit{Proof of Part} \ref{thm:st10}:
Next, we consider the game on $(3\a+1,0,4\g)$, noting that as a base case, $(1,0,0)$ is trivially a $P$ position. We show below that $(3\a+1,0,4\g)$ is a $P$ position for all $\a,\g$.
\begin{center}
    \begin{tikzpicture}
\node (ST1) at (0,0){$\ST{(3\a+1,0,4\g)}$};
	\node (NS1i) at (-3,-1.5){$\NS{(3(\a-1)+2,1,4\g)}$};
		\node [label={[label distance=1mm]270: B.2.A\tsub{1}}](ST2i) at (-5,-3){$\ST{(3(\a-1)+1,0,4\g+1)}$};
    \node (NS1ii) at (3,-1.5){$\NS{(3\a+2,0,4(\g-1)+2)}$};
        \node [label={[label distance=1mm]270: B.2.S\tsub{3}}](ST2ii) at (5,-3){$\ST{(3(\a+1),0,4(\g-1))}$};
        \node [label={[label distance=1mm]270: B.2.M}](ST2iii) at (0,-3){$\ST{(3\a,1,4(\g-1)+2)}$};
            \draw  (ST1) -- node[left=4mm] {M}(NS1i);
            \draw  (ST1) -- node[right=4mm] {S\tsub{3}}(NS1ii);
            \draw  (NS1i) -- node[left=4mm] {A\tsub{1}}(ST2i);
            \draw  (NS1ii) -- node[left=4mm] {M}(ST2iii);
            \draw  (NS1i) -- node[left=4mm] {S\tsub{3}}(ST2iii);
            \draw  (NS1ii) -- node[right=4mm] {S\tsub{3}}(ST2ii);
        \end{tikzpicture}
\end{center}\pvs
We showed that B.2.A\tsub{1} is a $P$ position for all $\a,\g$ by the proof of Part \ref{thm:st11}. By the proof of Part \ref{thm:st00}, B.2.S\tsub{3} is a $P$ position for all $\a \geq \g -2$ and by Theorem \ref{thm:wink1=0} Part \ref{wink2=2}, B.2.M is a $P$ position for all $\a \leq \g -3$. Thus, $(3\a+1,0,4\g)$ is a $P$ position for all $\a,\g \in \Znn$. \\
\vspace{.25cm}\\
\textit{Proof of Part} \ref{thm:st20}:
Next, we consider the game on $(3\a+2,0,4\g)$, with $\a \geq \g+1$, noting as a base case that $(5,0,0)$ is a $P$ position by Theorem \ref{thrm: allina}. Now, consider the game on an arbitrary $(3\a+2,0,4\g)$ with $\a \geq \g +1$.
\begin{center}
    \begin{tikzpicture}
\node (ST1) at (0,0){$\ST{(3\a+2,0,4\g)}$};
	\node (NS1i) at (-3,-1.5){$\NS{(3\a,1,4\g)}$};
		\node [label={[label distance=1mm]270: B.2.A\tsub{1}}](ST2i) at (-3,-3){$\ST{(3(\a-1)+2,0,4\g+1)}$};
    \node (NS1ii) at (3,-1.5){$\NS{(3(\a+1),0,4(\g-1)+2)}$};
        \node [label={[label distance=1mm]270: B.2.S\tsub{3}}](ST2ii) at (3,-3){$\ST{(3(\a+1)+1,0,4(\g-1))}$};
            \draw  (ST1) -- node[left=4mm] {M}(NS1i);
            \draw  (ST1) -- node[right=4mm] {S\tsub{3}}(NS1ii);
            \draw  (NS1i) -- node[right] {A\tsub{1}}(ST2i);
            \draw  (NS1ii) -- node[right] {S\tsub{3}}(ST2ii);
        \end{tikzpicture}
\end{center}\pvs
Since $\a \geq \g+1$, B.2.A\tsub{1} is a $P$ position by the proof of Part \ref{thm:st21} and B.2.S\tsub{3} is a $P$ position as shown in the proof of Part \ref{thm:st10}. Then, $(3\a+2,0,4\g)$ is a $P$ position for all $\a \geq \g +1$. \\
\vspace{.25cm}\\
This concludes our proof of Theorem \ref{thm:B=0}.
\end{proof}
\vspace{.25cm}
From here, we give a constructive proof of Lemma \ref{lemma:nck12} and Corollary 5.8 with the information we have built up so far. This ensures that the strategy we outline for $N$ positions in Theorem \ref{thm:B=0N} is in fact constructive. 
\begin{lemma}\label{lemma:ck12}
\textbf{(Constructive.)} Let $(a,b,c)$ be a game state for an $F_4$ Black Hole Zeckendorf game. For all $\alpha,\gamma, k_1,k_3 \in \Znn$, such that $1 \leq k_1\leq 2$, and $0 \leq k_3\leq 3$, $(3\alpha+k_1,1,4\gamma+k_3)$ is an $N$ position. $(3\a,1,4\g)$ is an $N$ position for all $\a \geq \g+1$. 
\end{lemma}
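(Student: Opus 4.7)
The plan is to convert the parity-stealing contradiction of Lemma \ref{lemma:nck12} into an explicit catalogue of winning moves, using the full $P$-position classification of $(a,0,c)$-states already established in Theorem \ref{thm:B=0}. From any state $(a,1,c)$ there are exactly two moves that zero the middle column: A\tsub{1}, which produces $(a-1,0,c+1)$ (legal whenever $a\geq 1$), and A\tsub{2}, which produces $(a,0,c-1)$ (legal whenever $c\geq 1$). Since every position $(3\alpha'+k_1',0,4\gamma'+k_3')$ is labelled $P$ or $N$ by Theorem \ref{thm:B=0}, it suffices to check in each case that at least one of A\tsub{1}, A\tsub{2} lands in a $P$ position.

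The argument proceeds by a finite case analysis on $(k_1,k_3)$. I will handle the eight combinations with $k_1\in\{1,2\}$ and $k_3\in\{0,1,2,3\}$, plus the separate case $k_1=0$ under the hypothesis $\alpha\geq\gamma+1$. For each combination I compute the A\tsub{1} and A\tsub{2} targets, reduce the first coordinate modulo $3$ and the third modulo $4$ to identify the applicable sub-part of Theorem \ref{thm:B=0}, and then verify that the union of the $(\alpha,\gamma)$-ranges in which these two moves yield $P$ positions covers all of $\Znn\times\Znn$. For instance, from $(3\alpha+1,1,4\gamma+1)$ the move A\tsub{2} reaches $(3\alpha+1,0,4\gamma)$, which is a $P$ position for every $\alpha,\gamma$ by Theorem \ref{thm:B=0} Part \ref{thm:st10}, so a single move suffices; similarly from $(3\alpha+2,1,4\gamma)$ the move A\tsub{1} produces $(3\alpha+1,0,4\gamma+1)$, a $P$ position for all $\alpha,\gamma$ by Part \ref{thm:st11}.

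In the remaining cases the choice depends on the relative sizes of $\alpha$ and $\gamma$. For example, from $(3\alpha+1,1,4\gamma+3)$ the move A\tsub{1} targets $(3\alpha,0,4(\gamma+1))$, a $P$ position when $\alpha\geq\gamma+1$ by Part \ref{thm:st00}, while A\tsub{2} targets $(3\alpha+1,0,4\gamma+2)$, a $P$ position when $\alpha\leq\gamma$ by Part \ref{thm:st12}, and these two ranges together cover all $(\alpha,\gamma)$. Analogous dichotomies handle $(3\alpha+1,1,4\gamma)$ (split at $\alpha\geq\gamma-1$ vs.\ $\alpha\leq\gamma-2$), $(3\alpha+2,1,4\gamma+1)$, and $(3\alpha+2,1,4\gamma+2)$. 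For the $k_1=0$ case, the single move A\tsub{1} sends $(3\alpha,1,4\gamma)$ to $(3(\alpha-1)+2,0,4\gamma+1)$, which by Theorem \ref{thm:B=0} Part \ref{thm:st21} is a $P$ position precisely when $\alpha-1\geq\gamma$, matching the hypothesis $\alpha\geq\gamma+1$ exactly.

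The one point demanding care will be the $\gamma=0$ boundary, at which A\tsub{2} may be illegal because $c=4\gamma+k_3$ can vanish. I must verify that in each such corner A\tsub{1} alone lands in a $P$ position; fortunately, for every $k_1\in\{1,2\}$ with $k_3=0$ the A\tsub{1} target has third coordinate $1$, and the relevant $P$-ranges of Theorem \ref{thm:B=0} Parts \ref{thm:st01}, \ref{thm:st11}, \ref{thm:st21} all include $\gamma=0$. The parallel check at $\alpha=0$ is automatic, since the first coordinate of the A\tsub{1} target is $k_1-1\geq 0$. With these boundary verifications the case analysis is mechanical and exhibits an explicit winning move from every position claimed by the lemma, completing the constructive refinement of Lemma \ref{lemma:nck12} and of the $\alpha\geq\gamma+1$ half of Corollary \ref{cor:nck0}.
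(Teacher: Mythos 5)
Your proposal is correct and is essentially the paper's own proof: the paper likewise argues by a case analysis over $(k_1,k_3)$, playing A\tsub{1} or A\tsub{2} into the $P$ positions catalogued in Theorem \ref{thm:B=0} (e.g.\ A\tsub{2} from $(3\a+1,1,4\g+1)$ to $(3\a+1,0,4\g)$ via Part \ref{thm:st10}, the split $\a\geq\g-1$ via Part \ref{thm:st01} versus $\a\leq\g-2$ via Part \ref{thm:st13} for $(3\a+1,1,4\g)$, and A\tsub{1} from $(3\a,1,4\g)$ to $(3(\a-1)+2,0,4\g+1)$ via Part \ref{thm:st21} for the $\a\geq\g+1$ claim). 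Your explicit legality check for A\tsub{2} at the $\g=0$, $k_3=0$ boundary is a detail the paper leaves implicit, but it does not change the argument.
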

\begin{proof}
    We proceed by demonstrating what the opening move should be from $(3\a+k_1,1,4\g+k_3)$ for $k_1=1,2$. For clarity, we consider each possible combination of $k_1$ and $k_2$ individually. \\
    \vspace{.25cm}\\
\textbf{Proof for} $k_1=1$.\\
    First, consider the game on $(3\a+1,1,4\g)$ in the game tree below. R.1.A\tsub{1} is a $P$ position for all $\a \geq \g-1$ by Theorem \ref{thm:B=0} Part \ref{thm:st01} and R.1.A\tsub{2} is a $P$ position for all $\a \leq \g -2$ by Theorem \ref{thm:B=0} Part \ref{thm:st13}. Therefore, $(3\a+1,1,4\g)$ is an $N$ position for all $\a,\g$. \begin{center}
    \begin{tikzpicture}
\node (ST1) at (0,0){$\ST{(3\a+1,1,4\g)}$};
		\node [label={[label distance=1mm]270: R.1.A\tsub{1}}](NS1i) at (-2.5,-1.5){$\NS{(3\a,0,4\g+1)}$};
        \node [label={[label distance=1mm]270: R.1.A\tsub{2}}](NS1ii) at (2.5,-1.5){$\NS{(3\a+1,0,4(\g-1)+3)}$};
            \draw  (ST1) -- node[left=3mm] {A\tsub{1}}(NS1i);
            \draw  (ST1) -- node[right=3mm] {A\tsub{2}}(NS1ii);
        \end{tikzpicture}
\end{center}\pvs
     Next, consider the game on $(3\a+1,1,4\g+1)$. The next player to move can add in the second and third column to place $(3\a+1,0,4\g)$ which is a $P$ position for all $\a,\g$ by Theorem \ref{thm:B=0} Part \ref{thm:st10}. Therefore, $(3\a+1,1,4\g+1)$ is an $N$ position for all $\a,\g$.\pvs
 Then, consider the game on $(3\a+1,1,4\g+2)$. The next player to move can add in the second and third column to place $(3\a+1,0,4\g+1)$ which is a $P$ position for all $\a,\g$ by Theorem \ref{thm:B=0} Part \ref{thm:st11}. Therefore, $(3\a+1,1,4\g+2)$ is an $N$ position for all $\a,\g$. \pvs
 Lastly, consider the game on $(3\a+1,1,4\g+3)$ in the game tree below. R.1.A\tsub{1} is a $P$ position for all $\a \geq \g+1$ by Theorem \ref{thm:B=0} Part \ref{thm:st00} and R.1.A\tsub{2} is a $P$ position for all $\a \leq \g$ by Theorem \ref{thm:B=0} Part \ref{thm:st12}. Therefore, $(3\a+1,1,4\g)$ is an $N$ position for all $\a,\g$. \begin{center}
    \begin{tikzpicture}
\node (ST1) at (0,0){$\ST{(3\a+1,1,4\g+3)}$};
		\node [label={[label distance=1mm]270: R.1.A\tsub{1}}](NS1i) at (-2.5,-1.5){$\NS{(3\a,0,4(\g+1))}$};
        \node [label={[label distance=1mm]270: R.1.A\tsub{2}}](NS1ii) at (2.5,-1.5){$\NS{(3\a+1,0,4\g+2)}$};
            \draw  (ST1) -- node[left=3mm] {A\tsub{1}}(NS1i);
            \draw  (ST1) -- node[right=3mm] {A\tsub{2}}(NS1ii);
        \end{tikzpicture}
\end{center}\pvs
\textbf{Proof for} $k_1=2$.\\
The strategy here is essentially the same as the one outlined above, with the optimal strategy shifted for the vaule of $k_3$.  First, consider the game on $(3\a+2,1,4\g)$. The next player to move can add in the first and second column to place $(3\a+1,0,4\g+1)$ which is a $P$ position for all $\a,\g$ by Theorem \ref{thm:B=0} Part \ref{thm:st11}. Therefore, $(3\a+2,1,4\g)$ is an $N$ position for all $\a,\g$.\pvs
Next, consider the game on $(3\a+2,1,4\g+1)$ in the game tree below. R.1.A\tsub{1} is a $P$ position for all $\a \leq \g$ by Theorem \ref{thm:B=0} Part \ref{thm:st12} and R.1.A\tsub{2} is a $P$ position for all $\a \geq \g +1$ by Theorem \ref{thm:B=0} Part \ref{thm:st20}. Therefore, $(3\a+2,1,4\g+1)$ is an $N$ position for all $\a,\g$. \begin{center}
    \begin{tikzpicture}
\node (ST1) at (0,0){$\ST{(3\a+2,1,4\g+1)}$};
		\node [label={[label distance=1mm]270: R.1.A\tsub{1}}](NS1i) at (-2.5,-1.5){$\NS{(3\a+1,0,4\g+2)}$};
        \node [label={[label distance=1mm]270: R.1.A\tsub{2}}](NS1ii) at (2.5,-1.5){$\NS{(3\a+2,0,4\g)}$};
            \draw  (ST1) -- node[left=3mm] {A\tsub{1}}(NS1i);
            \draw  (ST1) -- node[right=3mm] {A\tsub{2}}(NS1ii);
        \end{tikzpicture}
\end{center}\pvs
 Then, consider the game on $(3\a+2,1,4\g+2)$ in the game tree below. R.1.A\tsub{1} is a $P$ position for all $\a \leq \g-1$ by Theorem \ref{thm:B=0} Part \ref{thm:st13} and R.1.A\tsub{2} is a $P$ position for all $\a \geq \g$ by Theorem \ref{thm:B=0} Part \ref{thm:st21}. Therefore, $(3\a+2,1,4\g+1)$ is an $N$ position for all $\a,\g$. \begin{center}
    \begin{tikzpicture}
\node (ST1) at (0,0){$\ST{(3\a+2,1,4\g+2)}$};
		\node [label={[label distance=1mm]270: R.1.A\tsub{1}}](NS1i) at (-2.5,-1.5){$\NS{(3\a+1,0,4\g+3)}$};
        \node [label={[label distance=1mm]270: R.1.A\tsub{2}}](NS1ii) at (2.5,-1.5){$\NS{(3\a+2,0,4\g+1)}$};
            \draw  (ST1) -- node[left=3mm] {A\tsub{1}}(NS1i);
            \draw  (ST1) -- node[right=3mm] {A\tsub{2}}(NS1ii);
        \end{tikzpicture}
\end{center}\pvs 
 Then, consider the game on $(3\a+1,1,4\g+3)$. The next player to move can add in the first and second column to place $(3\a+1,0,4(\g+1))$ which is a $P$ position for all $\a,\g$ by Theorem \ref{thm:B=0} Part \ref{thm:st10}. Therefore, $(3\a+2,1,4\g)$ is an $N$ position for all $\a,\g$.
 Finally, see that $(3\a,1,4\g)$ is an $N$ position for all $\a \geq \g+1$ as the next player to move can place $(3(\a-1)+2,0,4\g+1)$ which is a $P$ position for all $\a \geq \g+1$ by Theorem \ref{thm:B=0} Part \ref{thm:st21}. 
\end{proof}
From here, we outline the winnings strategies for the next player to move following an $N$ position. \pvs
\begin{thm}\label{thm:B=0N} 
Let $(a,0,c)=(3\a+k_1,0,4\g+k_3)$ be a board setup for an $F_4$ Black Hole Zeckendorf game. For $\a,\g,k_1,k_3 \in \Znn$, $0 \leq k_1 \leq 2$, and $0 \leq k_3 \leq 3$, the following positions are $N$ positions.
\begin{enumerate}
    \item $c \equiv 1 \pmod 4$: \label{thm:k2=1}
    \begin{enumerate}
        \item $(3\a,0,4\g+1)$ for all $\a \leq \g-2$. \label{thm:ns01}
        \item  $(3\a+2,0,4\g+1)$ for all $\a \leq \g-1$. \label{thm:ns21}
    \end{enumerate}
    \item $c \equiv 2 \pmod 4$: \label{thm:k2=2}
    \begin{enumerate}
        \item $(3\a,0,4\g+2)$ for all $\a,\g$. \label{thm:ns02}
        \item $(3\a+1,0,4\g+2)$ for all $\a \geq \g+1$. \label{thm:ns12}
        \item $(3\a+2,0,4\g+2)$ for all $\a,\g$. \label{thm:ns22}
    \end{enumerate}
    \item $c \equiv 3 \pmod 4$:\label{thm:k2=3}
    \begin{enumerate}
        \item $(3\a,0,4\g+3)$ for all $\a,\g$. \label{thm:ns03}
        \item $(3\a+1,0,4\g+3)$ for all $\a \geq \g$. \label{thm:ns13}
        \item  $(3\a+2,0,4\g+3)$ for all $\a,\g$. \label{thm:ns23}
    \end{enumerate}
    \item $c \equiv 0 \pmod 4$:\label{thm:k2=0}
        \begin{enumerate}
        \item  $(3\a,0,4\g)$ for all $\a \leq \g-1$. \label{thm:ns00}
        \item  $(3\a+2,0,4\g)$ for all $\a \leq \g$. \label{thm:ns20}
    \end{enumerate}
\end{enumerate}
\end{thm}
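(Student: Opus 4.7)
The plan is to prove each listed position is an $N$ position by exhibiting a single move to a position already classified as a $P$ position by Theorem \ref{thm:B=0} or Theorem \ref{thm:wink1=0}. From any state of the form $(3\alpha+k_1,0,4\gamma+k_3)$, the only legal moves are the merge $M$ (available when $3\alpha+k_1\geq 2$), which yields $(3\alpha+k_1-2,\,1,\,4\gamma+k_3)$, and the split $S_3$ from column $F_3$ (available when $4\gamma+k_3\geq 2$), which yields $(3\alpha+k_1+1,\,0,\,4\gamma+k_3-2)$. Merging lands in a $b=1$ state governed by Theorem \ref{thm:wink1=0}, while splitting remains in a $b=0$ state governed by Theorem \ref{thm:B=0}; each sub-case of the present theorem will be discharged by one or the other.

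For the single-move sub-cases (Parts \ref{thm:ns01}, \ref{thm:ns21}, \ref{thm:ns02}, \ref{thm:ns12}, \ref{thm:ns03}, \ref{thm:ns13}, \ref{thm:ns00}, and \ref{thm:ns20}), I would write each as a one-line verification: perform the move, rewrite the result in the $(3\alpha'+k_1',0,4\gamma'+k_3')$ form, identify the matching $P$ result, and check that the required inequality on $\alpha',\gamma'$ follows from the given one on $\alpha,\gamma$. For example, from $(3\alpha+2,0,4\gamma+1)$ with $\alpha\leq\gamma-1$, merging yields $(3\alpha,1,4\gamma+1)$, a $P$ position for $\alpha\leq\gamma-1$ by Theorem \ref{thm:wink1=0} Part \ref{wink2=1}; from $(3\alpha,0,4\gamma+2)$ splitting yields $(3\alpha+1,0,4\gamma)$, a $P$ position for all $\alpha,\gamma$ by Theorem \ref{thm:B=0} Part \ref{thm:st10}; from $(3\alpha+2,0,4\gamma)$ with $\alpha\leq\gamma$ merging yields $(3\alpha,1,4\gamma)$, a $P$ position by Theorem \ref{thm:wink1=0} Part \ref{wink2=0}.

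The slightly subtler sub-cases are Parts \ref{thm:ns22} and \ref{thm:ns23}, where the hypothesis is ``for all $\alpha,\gamma$'' but no single move covers every pair. Here I will use merge and split complementarily. For $(3\alpha+2,0,4\gamma+2)$, merging gives $(3\alpha,1,4\gamma+2)$, a $P$ position for $\alpha\leq\gamma-2$ by Theorem \ref{thm:wink1=0} Part \ref{wink2=2}, while splitting gives $(3(\alpha+1),0,4\gamma)$, a $P$ position for $\alpha+1\geq\gamma$ by Theorem \ref{thm:B=0} Part \ref{thm:st00}; the two ranges overlap and jointly cover every $(\alpha,\gamma)$. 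The same dichotomy handles $(3\alpha+2,0,4\gamma+3)$: merging is a $P$-move for $\alpha\leq\gamma+2$ by Theorem \ref{thm:wink1=0} Part \ref{wink2=3}, and splitting to $(3(\alpha+1),0,4\gamma+1)$ is a $P$-move for $\alpha+1\geq\gamma-1$ by Theorem \ref{thm:B=0} Part \ref{thm:st01}.

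The main obstacle is bookkeeping rather than any deeper obstruction: for each sub-case I must verify that the proposed move is legal (enough pieces in the source column, which is automatic in the ranges given, except possibly at tiny boundary values such as $\alpha=\gamma=0$ in Part \ref{thm:ns00} or \ref{thm:ns02}, which I would handle directly), correctly rewrite the resulting state in the standard $(3\alpha'+k_1',0,4\gamma'+k_3')$ form, and confirm the inequality on the primed parameters matches what the cited theorem requires. Once these routine checks are in place, the whole proof reads as a compact case-by-case table mirroring the structure of Theorem \ref{thm:B=0} with the arrows reversed.
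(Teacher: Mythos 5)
Your proposal is correct and follows essentially the same route as the paper: each listed position is shown to be an $N$ position by exhibiting a move to a $P$ position already established in Theorem \ref{thm:B=0} (or the equivalent statements in Lemma \ref{lemma:B=0} and Corollary \ref{cor: nca>c}), with the merge/split dichotomy covering the ``for all $\alpha,\gamma$'' cases exactly as in the paper's treatment of Parts \ref{thm:ns22} and \ref{thm:ns23}. The only cosmetic difference is Part \ref{thm:ns01}, where the paper plays out a two-move line, whereas your single split to $(3\alpha+1,0,4(\gamma-1)+3)$, which is a $P$ position by Theorem \ref{thm:B=0} Part \ref{thm:st13} precisely when $\alpha\leq\gamma-2$, already suffices.
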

\begin{proof} Again, we work through each section of the theorem, referencing a base case as an example, and then demonstrating the winning strategy. Here we show that it is always possible for the next player to place a $P$ position. \pvs
\textbf{Proof for} $\bm{c \equiv 1\pmod 4}$.\\
\textit{Proof of Part} \ref{thm:ns01}:
Consider the game on $(3\a,0,4\g+1)$ where $\a \leq \g-2$. As a base case, note that $(0,0,9)$ is an $N$ position by Theorem \ref{thrm: allinc}. Then, consider the tree below.
\begin{center}
    \begin{tikzpicture}
\node (ST1) at (0,0){$\ST{(3\a,0,4\g+1)}$};
	\node (NS1i) at (0,-1.5){$\NS{(3\a+1,0,4(\g-1)+3)}$};
		\node (ST2i) at (3,-3){$\ST{(3\a+2,0,4(\g-1)+1)}$};
            \node [label={[label distance=1mm]270: R.2.M}](NS2i) at (3,-4.5){$\NS{(3\a,1,4(\g-1)+1)}$};
        \node (ST2ii) at (-3,-3){$\ST{(3(\a-1)+2,1,4(\g-1)+3)}$};
            \node [label={[label distance=1mm]270: R.2.A\tsub{1}}](NS2ii) at (-3,-4.5){$\NS{(3(\a-1)+1,0,4\g)}$};
            \draw  (ST1) -- node[right] {S\tsub{3}}(NS1i);
            \draw  (NS1i) -- node[right=4mm] {S\tsub{3}}(ST2i);
            \draw  (NS1i) -- node[left=4mm] {M}(ST2ii);
            \draw  (ST2i) -- node[right] {M}(NS2i);
            \draw  (ST2ii) -- node[right] {A\tsub{1}}(NS2ii);
        \end{tikzpicture}
\end{center}\pvs
R.2.A\tsub{1} is a $P$ position by Lemma \ref{lemma:B=0} Part \ref{lemma:10} and R.2.M is a $P$ position by Theorem \ref{thm:wink1=0} Part \ref{wink2=1}, making $(3\a,0,4\g+1)$ an $N$ position for $\a \leq \g-2$. \\
\vspace{.25cm}\\
\textit{Proof of Part} \ref{thm:ns21}:
Next, we consider the game on $(3\a+2,0,4\g+1)$ with $\a \leq \g-1$. As a base case, note that $(2,0,5)$ is an $N$ position by Corollary \ref{corallc}. Then, note that the first player to move from $(3\a+2,0,4\g+1)$ can merge to place $(3\a,1,4\g+1)$ which is a $P$ position by Theorem \ref{thm:wink1=0} Part \ref{wink2=1} given that $\a \leq \g-1$. Therefore, $(3\a+2,0,4\g+1)$ is an $N$ position for $\a \leq \g-1$.\\
\vspace{.25cm}\\
\textbf{Proof for} $\bm{c \equiv 2\pmod 4}$.\\
\textit{Proof of Part} \ref{thm:ns02}:
We begin by showing that $(3\a,0,4\g+2)$ is an $N$ position for all $\a,\g$, noting as a base case that $(0,0,2)$ is an $N$ position by Theorem \ref{thrm: allinc}. In general, the first player to move from $(3\a,0,4\g+2)$ can split in the third column to place $(3\a+1,0,4\g)$
which is a $P$ position for all $\a,\g$ by Theorem \ref{thm:B=0} Part \ref{thm:st10}. Therefore $(3\a,0,4\g+2)$ is an $N$ position for all $\a,\g$.\\
\vspace{.25cm}\\
\textit{Proof of Part} \ref{thm:ns12}:
The fact that $(3\a+1,0,4\g+2)$ is an $N$ position for $\a \geq \g+1$. First, note that $(6,0,2)$ is an $N$ position by \ref{coralla}. In general, the next player to move from $(3\a+1,0,4\g+2)$ can always split in the $F_3$ column to place $(3\a+2,0,4\g)$ which is a $P$ position for all $\a \geq \g+1$ by Theorem \ref{thm:B=0} Part \ref{thm:st20}.\\
\vspace{.25cm}\\
\textit{Proof of Part} \ref{thm:ns22}:
We conclude this section by showing that $(3\a+2,0,4\g+2)$ is an $N$ position for all $\a, \g$, noting that as a base case $(2,0,2)$ is an $N$ position by Corollary \ref{corallc}. We consider the general case below.
\begin{center}
    \begin{tikzpicture}
\node (ST1) at (0,0){$\ST{(3\a+2,0,4\g+2)}$};
	\node [label={[label distance=1mm]270: R.1.S\tsub{3}}](NS1i) at (3,-1.5){$\NS{(3(\a+1),0,4\g)}$};
 \node [label={[label distance=1mm]270: R.1.M}](NS1ii) at (-3,-1.5){$\NS{(3\a,1,4\g+2)}$};
            \draw  (ST1) -- node[right=4mm] {S\tsub{3}}(NS1i);
            \draw  (ST1) -- node[left=4mm] {M}(NS1ii);
        \end{tikzpicture}
\end{center}\pvs
By Theorem \ref{thm:wink1=0} Part \ref{wink2=2}, R.1.M is a $P$ position for all $\a \leq \g -2$ and by Lemma \ref{lemma:B=0} Part \ref{lemma:00} R.1.S\tsub{3} is a $P$ position for all $\a \geq \g -1$. Thus, $(3\a+2,0,4\g)$ is an $N$ position for all $\a, \g$. \\
\vspace{.25cm}\\
\textbf{Proof for} $\bm{c \equiv 3\pmod 4}$.\\
\textit{Proof of Part} \ref{thm:ns03}:
We begin this section by showing that $(3\a,0,4\g+3)$ is an $N$ position for all $\a,\g$, noting as a base case that $(0,0,3)$ is an $N$ position by Theorem \ref{thrm: allinc}. For the general case, see that from $(3\a,0,4\g+3)$ it is possible to split in the third column to place $(3\a+1,0,4\g+1)$ which is a $P$ position for all $\a,\g$ as shown in Theorem \ref{thm:B=0} Part \ref{thm:st11}. Therefore, $(3\a,0,4\g+3)$ is an $N$ position for all $\a,\g$.\\
\vspace{.25cm}\\
\textit{Proof of Part} \ref{thm:ns13}:
We follow by showing that $(3\a+1,0,4\g+3)$ is an $N$ position for all $\a \geq \g$, noting as a base case that $(1,0,3)$ is an $N$ position by Corollary \ref{corallc}. In the general case, the first player to move from $(3\a+1,0,4\g+3)$ can split in the third column to place $(3\a+2,0,4\g+1)$ which is a $P$ position by Theorem \ref{thm:B=0} Part \ref{thm:st21}. Therefore, $(3\a+1,0,4\g+3)$ is an $N$ position for all $\a \geq \g$.\\
\vspace{.25cm}\\
\textit{Proof of Part} \ref{thm:ns23}:
We conclude this section by showing that $(3\a+2,0,4\g+3)$ is an $N$ position for all $\a, \g$, noting as a base case that $(2,0,3)$ is an $N$ position by Corollary \ref{corallc}. We consider the general case below.
\begin{center}
    \begin{tikzpicture}
\node (ST1) at (0,0){$\ST{(3\a+2,0,4\g+3)}$};
	\node [label={[label distance=1mm]270: R.1.M}](NS1i) at (-3,-1.5){$\NS{(3\a,1,4\g+3)}$};
 \node [label={[label distance=1mm]270: R.1.S\tsub{3}}](NS1ii) at (3,-1.5){$\NS{(3(\a+1),0,4\g+1)}$};
            \draw  (ST1) -- node[left=4mm] {M}(NS1i);
            \draw  (ST1) -- node[right=4mm] {S\tsub{3}}(NS1ii);
        \end{tikzpicture}
\end{center}\pvs
By Theorem \ref{thm:wink1=0} Part \ref{wink2=3}, R.1.M is a $P$ position for all $\a \leq \g +2$ and by Corollary \ref{cor: nca>c} Part \ref{cor:3k1}, R.1.S\tsub{3} is a $P$ position for all $\a \geq \g -1$. Thus, $(3\a+2,0,4\g+3)$ is an $N$ position for all $\a, \g$. \\
\vspace{.25cm}\\
\textbf{Proof for} $\bm{c \equiv 0\pmod 4}$.\\
\textit{Proof of Part} \ref{thm:ns00}:
 Consider the game on $(3\a,0,4\g)$ where $\a \leq \g-1$. As a base case, note that $(0,0,4)$ is an $N$ position by Theorem \ref{thrm: allinc}. In the general case, the first player to move from $(3\a,0,4\g)$ can split in the third column to place $(3\a+1,0,4(\g-1)+2)$ which is a $P$ position by Theorem \ref{thm:B=0} Part \ref{thm:st12} since $\a \leq \g-1$. Therefore, $(3\a,0,4\g)$ is an $N$ position for all $\a \leq \g-1$.
\vspace{.25cm}\\
\textit{Proof of Part} \ref{thm:ns20}:
To conclude the proof of this theorem, we consider the game on $(3\a+2,0,4\g)$ with $\a \leq \g$. As a base case, see that $(2,0,0)$ is an $N$ position by Theorem \ref{thrm: allina}. In the general case, it is possible to merge from $(3\a+2,0,4\g)$ to place $(3\a,1,4\g)$ which is a $P$ position for $\a\leq \g$ by Theorem \ref{thm:wink1=0} Part \ref{wink2=0}. Therefore, $(3\a+2,0,4\g)$ is an $N$ position for all $\a \leq \g$.\\
\vspace{.25cm}\\
This concludes our proof of Theorem \ref{thm:B=0N}.
\end{proof}
\subsection{Empty Board Game}
We now continue to the Empty Board game, to determine which player has a winning strategy for any given $n \in \Zp$. We define the game so that players can still only place pieces in the outermost columns, meaning that no player can move to the $F_2$ column until all pieces have been placed on the board. Then, the initial setup is guaranteed to be in the form of some $(a,0,c)$ above. Players are able to force certain setups, as there are only 2 options for placement, so the ``Tweedledum and Tweedledee" strategy remains an option as in the $F_3$ Black Hole game. Again, players aim to either set the board as a $P$ position or force their opponent to set it as an $N$ position.\par
\begin{thm}\label{F4evensplit}
   Let $(0,0,0)$ be the beginning board for an Empty Board $F_4$ Black Hole Zeckendorf game with $n \in \Zp$ pieces. Players can force certain game setups as outlined below.
   \begin{enumerate}
       \item For any $n \equiv 0 \pmod {4}$, one player can force the game into a setup $(n/4,0,n/4)$. 
       \item For any $n \equiv 1 \pmod {4}$, one player can force the game into a setup $((n-1)/4+1,0,(n-1)/4)$.
       \item For any $n \equiv 2 \pmod {4}$, one player can force the game into a setup $((n-2)/4+2,0,(n-2)/4).$
       \item For any $n \equiv 3 \pmod {4}$, Player 1 can force the game setup $((n-3)/4+3,0,(n-3)/4)$ or can alternatively force the setup $((n-3)/4,0,(n-3)/4+1)$. Player $2$ can force the game to begin with one of these two setups, but cannot force a specific one.
   \end{enumerate}.
\end{thm}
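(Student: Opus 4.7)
The unifying device is the ``Tweedledum and Tweedledee'' strategy: whenever one player responds to the opponent by placing in the opposite outermost column ($F_1$ vs.\ $F_3$), the pair of moves contributes exactly $(1,0,1)$ to the board and removes $F_1+F_3=4$ pieces from the pile. This is why the four congruence classes of $n$ modulo $4$ determine the forced setups. My plan for each case is to specify who plays the Tweedledee role (and with what opening move), verify that the opposite response is always legal, and then count the resulting distribution of pieces.

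For Case (1), with $n \equiv 0 \pmod{4}$, I will have Player $2$ play opposite. Before each Player $1$ move the remaining pile $n'$ is either $0$ (the game has ended) or $n' \geq 4$, so Player $2$'s opposite response is always legal, since a response of $F_3$ only requires $3$ leftover pieces. After exactly $n/4$ paired rounds the pile is exhausted, Player $2$ has placed last, and the board reads $(n/4,0,n/4)$. Case (2) reduces to Case (1) after a forced opening: Player $1$ opens in $F_1$, leaving $n-1 \equiv 0 \pmod{4}$, and then plays opposite Player $2$ for the remainder of the placement phase. The feasibility check is identical, and the net distribution is $((n-1)/4+1,0,(n-1)/4)$.

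For Case (3), Player $2$ again plays opposite. After $(n-2)/4$ full paired rounds exactly $2$ pieces remain and it is Player $1$'s turn; Player $1$ is forced into $F_1$, which leaves only $1$ piece, so Player $2$'s would-be $F_3$ response is no longer legal and Player $2$ must also play $F_1$. These final two forced moves provide the two extra $F_1$-pieces needed to reach $((n-2)/4+2,0,(n-2)/4)$. In Case (4), Player $1$'s opening choice determines which of the two target setups the game hits. Opening in $F_1$ and then playing opposite reduces the rest of the game to a Case (3)-style endgame (with Player $1$ supplying the forced endgame $F_1$s), yielding $((n-3)/4+3,0,(n-3)/4)$; opening in $F_3$ and then playing opposite reduces to a Case (1)-style pattern on $n-3 \equiv 0 \pmod{4}$, yielding $((n-3)/4,0,(n-3)/4+1)$. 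Thus Player $1$ can force either setup. If instead it is Player $2$ who plays opposite throughout, Player $1$'s opening column pushes the game into exactly one of these two configurations, so Player $2$ forces the game into one of them but cannot select which.

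The main obstacle I anticipate is the endgame feasibility bookkeeping in Cases (3) and (4): although each paired round cleanly removes $4$ pieces, the opposite strategy is only legal while enough pieces remain for the responder to place in $F_3$, so the precise moment when feasibility fails (and which forced moves follow) has to line up with the extra $F_1$-count promised by the theorem. Verifying this alignment amounts to tracking the residue of the pile modulo $4$ before each player's turn and checking that the only deviation from the clean pairing is the inevitable forced endgame.
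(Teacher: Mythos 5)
Your placement strategies are the paper's own ``Tweedledum and Tweedledee'' device, and the residue bookkeeping in each case is correct as far as it goes. The genuine gap is one of coverage: in items (1)--(3) you prove the forcing for one \emph{fixed} player, whereas the paper's proof (and the way Theorem \ref{F4evensplit} is consumed in Theorem \ref{thm:win04}) needs the player who actually benefits from the forced setup to be the one who can force it, and which player that is depends on whether the setup is a $P$ or an $N$ position. Concretely, for $n \equiv 0 \pmod{4}$ you only show that Player $2$ can mirror into $(n/4,0,n/4)$; but when that state is an $N$ position (e.g.\ $n=8,12,24,28,32$), it is Player $1$ who must be able to force Player $2$ to place it --- the paper does this by having Player $1$ open in $F_3$ (leaving a pile $\equiv 1 \pmod 4$) and then mirror, so Player $2$ is stuck placing the final $F_1$ piece. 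Likewise for $n \equiv 2 \pmod 4$ you give only Player $2$'s mirror and omit Player $1$'s strategy (open in $F_1$, then mirror, forcing Player $2$'s last $F_1$), and for $n \equiv 1 \pmod 4$ you give only Player $1$'s strategy and omit Player $2$'s pure mirror, which forces Player $1$ to place the setup and is exactly what Player $2$ needs when that setup is an $N$ position (e.g.\ $n=9,13,17$). The missing halves use the same trick you already employ elsewhere (one opening move to shift the residue, then mirror), so the repair is routine, but without them the later constructive solution has no placement-phase strategy for the winner at all of the $N$-position values of $n$.

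A smaller point in item (4): when Player $2$ mirrors throughout, the branch between $((n-3)/4+3,0,(n-3)/4)$ and $((n-3)/4,0,(n-3)/4+1)$ is not decided by Player $1$'s opening column --- under mirroring the board stays balanced no matter what Player $1$ does early on --- but by Player $1$'s move when exactly $3$ pieces remain: placing $F_3$ ends the phase in the second setup, while placing $F_1$ triggers two forced $F_1$ placements and yields the first. Your conclusion that Player $2$ can force one of the two setups but cannot select which is correct; only the stated mechanism is off.
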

\begin{proof}
    First, consider $n \equiv 0 \pmod {4}$. If $(n/4,0,n/4)$ is an $N$ position for Player $1$ can force Player $2$ to place this setup by placing their first piece in the $F_3$ column, and then placing opposite Player $2$ for all other moves. It will be Player $2$'s turn when there is $1$ piece left, so they will be forced to place down $(n/4,0,n/4)$. If $(n/4,0,n/4)$ is a $P$ position, they can mirror Player $1$ until they are able to place down $(n/4,0,n/4)$.\pvs
    Next, consider $n \equiv 1 \pmod {4}$. If $((n-1)/4+1,0,(n-1)/4)$ is a $P$ position, Player $1$ place their first piece in the $F_1$ column, and then place opposite Player $2$ for all other moves. Every board Player $1$ places will be of the form $(m+1,0,m)$ for some non-negative integer $m$, until all $n$ have been placed, resulting in the setup $((n-1)/4+1,0,(n-1)/4)$. If $((n-1)/4+1,0,(n-1)/4)$ is an $N$ position, Player $2$ can mirror Player $1$ until there is one piece left, forcing Player $1$ to place $((n-1)/4+1,0,(n-1)/4)$. \pvs
    Then, consider $n \equiv 2 \pmod {4}$. If $((n-2)/4+2,0,(n-2)/4)$ is an $N$ position, Player $1$ can place their first piece in the $F_1$ column, and then place opposite Player $2$ for all other moves. Every board Player $1$ places will be of the form $(m+1,0,m)$ until there is only $1$ piece left, which Player $2$ is forced to set down in the $F_1$ column, placing down $((n-2)/4+2,0,(n-2)/4)$. If this is a $P$ position, Player $2$ can force it via the ``Tweedledum and Tweedledee" strategy until there are only $2$ pieces left. Since pieces can only be placed in the outermost columns, Player $1$ is forced to place $((n-2)/4+1,0,(n-2)/4)$, allowing Player $2$ to place $((n-2)/4+2,0,(n-2)/4)$. \pvs
    Lastly, consider $n \equiv 3 \pmod {4}$. If Player 1 places their first piece in the $F_1$ column, and then places opposite Player $2$ for all other moves, then every board Player $1$ places will be of the form $(m+1,0,m)$ until there are only $2$ pieces left; Player $2$ is forced to set down one in the $F_1$ column, and Player $1$ sets the other, placing down $((n-3)/4+3,0,(n-3)/4)$. Player $1$ can also force the setup $((n-3)/4,0,(n-3)/4+1)$ by placing their first piece in the $F_3$ column, and then mirroring Player $2$, until the board results in Player $1$ placing $((n-3)/4,0,(n-3)/4+1)$. Player $2$ can force the game to be one of these positions by mirroring Player $1$ until there are $3$ pieces left. However, at this point Player $1$ determines whether the setup is $((n-3)/4+3,0,(n-3)/4)$ or $((n-3)/4,0,(n-3)/4+1)$ as noted above.\pvs
\end{proof}
\begin{thm}\label{thm:win04}
    Player $2$ has a constructive strategy for winning an Empty Board $F_4$ Black Hole Zeckendorf game for any $n \equiv 0,2,4,6,9,11,13 \pmod{16}$ such that $n\neq 2, 32$ in which case Player $1$ has the winning strategy. Player $1$ has a constructive strategy for winning an Empty Board $F_4$ Black Hole Zeckendorf game for any $n \equiv 1,3,5,7,8,10,12,14,15 \pmod{16}$, such that $n\neq 17,47$, in which case Player $2$ has the winning strategy.
\end{thm}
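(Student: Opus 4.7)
The plan is to combine Theorem \ref{F4evensplit}, which identifies for each residue class of $n$ modulo $4$ the decomposition position that can be forced to arise from the Empty Board phase, with Figure \ref{fig:winners(a,0,c)}, which classifies any such position $(a,0,c)$ as a $P$ or $N$ position. The player who places the final piece of the Empty Board phase is the one who sets the decomposition position; by the normal-play convention, that player wants a $P$ position so the opponent must move first from a losing position. Thus in every case the winner of the overall game is determined by the $P/N$ status of the forced setup together with the parity of the total number $a+c$ of placed pieces.

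First I split into the four classes $n\bmod 4$ and identify who places last by a parity computation. For $n\equiv 0\pmod 4$ the setup $(n/4,0,n/4)$ uses $n/2$ placements, so Player $2$ sets the board; for $n\equiv 1\pmod 4$ the setup uses $(n+1)/2$ placements, so Player $1$ sets the board; for $n\equiv 2\pmod 4$ the setup uses $(n+2)/2$ placements, so Player $2$ sets the board; and for $n\equiv 3\pmod 4$ each of Player $1$'s two candidate setups uses an odd number of placements, so Player $1$ sets the board and will choose the $P$ option whenever one exists. I then refine each class into four subclasses modulo $16$: this fixes $k_3=c\bmod 4$ and the offset $a-c$, so $(k_1,k_3)$ together with $\alpha$ and $\gamma$ is determined from $n$, and $\alpha\approx 4\gamma/3$ since $a\approx c$. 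For each subclass I read the cell $(k_1,k_3)$ from Figure \ref{fig:winners(a,0,c)} and verify that the governing inequality $\alpha\geq\gamma+d$ (with $d\in\{-2,-1,0,1\}$) holds once $n$ is sufficiently large; all three values of $k_1$ arising within a single subclass give the same $P/N$ status, so the winner depends only on $n\bmod 16$. Compiling these sixteen outcomes matches the list of winners in the theorem statement.

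The main obstacle, and the source of the four exceptions, is the small list of values of $n$ for which the governing inequality fails precisely at the cell $(k_1,k_3)=(2,0)$, which requires $\alpha\geq\gamma+1$. Direct inspection gives: $n=2$ with setup $(2,0,0)$ and $\alpha=\gamma=0$; $n=17$ with setup $(5,0,4)$ and $\alpha=\gamma=1$; $n=32$ with setup $(8,0,8)$ and $\alpha=\gamma=2$; and $n=47$, where Player $1$'s first option $(14,0,11)$ lies in the unconditionally $N$ cell $(2,3)$ and the second option $(11,0,12)$ lies in cell $(2,0)$ with $\alpha=\gamma=3$, again failing $\alpha\geq\gamma+1$. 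In each of these four cases the setter is stuck with an $N$ position, so the opposite player wins. To make the argument constructive end to end, the setter with a $P$ position uses the reduction strategy of Theorem \ref{thm:B=0}, while the opponent moving first from an $N$ position uses Theorem \ref{thm:B=0N}; combined with the forcing strategies from Theorem \ref{F4evensplit}, this yields a fully constructive winning strategy for the claimed player in every residue class.
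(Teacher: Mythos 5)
Your proposal is correct and follows essentially the same route as the paper's proof: use Theorem \ref{F4evensplit} to pin down the forced setup (and, via parity, who sets it), classify that setup with the $(k_1,k_3)$ table established by Theorems \ref{thm:B=0} and \ref{thm:B=0N}, verify the governing inequality for large $n$, and check the finitely many small cases, which yields exactly the exceptions $n=2,17,32,47$. The only cosmetic difference is your unifying remark that all four exceptions stem from the $\a\geq\g+1$ condition in the $(k_1,k_3)=(2,0)$ cell (together with the unconditional $N$ cell $(2,3)$ for the other option at $n=47$), which the paper instead exhibits through its explicit case tables.
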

\begin{proof}
We split this proof into sections based on the value of $n$ modulo 4.\\
\vspace{.25cm}\\
\textbf{Proof for }\bm{$n \equiv 0 \pmod 4$}.\par
    Player $2$ sets down the board as $(n/4,0,n/4)$, by Theorem \ref{F4evensplit} so they win when this is a $P$ position and lose when it is an $N$ position. For $n\equiv 0,4 \pmod{16},$ we have $n/4 \equiv 0,1 \pmod 4$ while $n\equiv 8,12 \pmod{16}$ implies $n/4 \equiv 2,3 \pmod 4$. \pvs
 For all $n/4\geq 9$,  with $0 \leq k_1 \leq 2$, $0 \leq k_3 \leq 3$ there is no $\a\leq \g$ such that $3\a+k_1=n/4=4\g+k_3$. Thus, when $n/4 \equiv 0,1 \pmod 4$, Player $2$ wins for all $n/4 \geq 9$ by Theorem \ref{thm:B=0} Part \ref{thm:k2=0} and \ref{thm:k2=1} respectively. When $n/4 \equiv 2,3 \pmod 4$, Player $1$ wins for all $n/4 \geq 9$ by Theorem \ref{thm:B=0N} Part \ref{thm:k2=2} and \ref{thm:k2=3} respectively. \pvs
    Then, it is only necessary to explicitly consider the cases when $n/4\leq 8$. We outline the outline the value of $n$, the cases, the winners, and the parts of Theorem \ref{thm:B=0} and Theorem \ref{thm:B=0N} that determines the winner below. \begin{align*}
      &\text{Value of }n &&\text{Board Setup} &&& \text{Winner} &&&& \text{Theorem}\\
       &n=4 &&(1,0,1)=(3(0)+1,0,4(0)+1) &&& \text{Player 2} &&&& \ref{thm:B=0} \text{ Part } \ref{thm:st11}\\
       &n=8 &&(2,0,2)=(3(0)+2,0,4(0)+2) &&& \text{Player 1} &&&& \ref{thm:B=0N} \text{ Part } \ref{thm:ns22}\\
       &n=12 &&(3,0,3)=(3(1)+0,0,4(0)+3) &&& \text{Player 1} &&&& \ref{thm:B=0N} \text{ Part } \ref{thm:ns03}\\
       &n=16 &&(4,0,4)=(3(1)+1,0,4(1)+0) &&& \text{Player 2} &&&& \ref{thm:B=0} \text{ Part } \ref{thm:st10}\\
       &n=20 &&(5,0,5) =(3(1)+2,0,4(1)+1)&&& \text{Player 2} &&&& \ref{thm:B=0} \text{ Part } \ref{thm:st21}\\
       &n=24 &&(6,0,6)=(3(2)+0,0,4(1)+2) &&& \text{Player 1} &&&& \ref{thm:B=0N} \text{ Part } \ref{thm:ns02}\\
       &n=28 &&(7,0,7)=(3(2)+1,0,4(1)+3) &&& \text{Player 1} &&&& \ref{thm:B=0N} \text{ Part } \ref{thm:ns13}\\
       &n=32 &&(8,0,8)=(3(2)+2,0,4(2)+0) &&& \text{Player 1} &&&& \ref{thm:B=0N} \text{ Part } \ref{thm:ns20}
    \end{align*}\pvs
    Thus, Player $2$ wins for all $n\equiv 0,4 \pmod{16}$ such that $n\neq 32$ and Player $1$ wins for all $n \equiv 8,12 \pmod{16}$, as well as $n=32$.\\
    \vspace{.25cm}\\
    \textbf{Proof for }\bm{$n \equiv 1 \pmod 4$}.\par
    Player $1$ sets down the board as $((n-1)/4+1,0,(n-1)/4)$ by Theorem \ref{F4evensplit}. For $n\equiv 1,5 \pmod{16},$ we have $(n-1)/4 \equiv 0,1 \pmod 4$ while $n\equiv 9,13 \pmod{16}$ implies $(n-1)/4 \equiv 2,3 \pmod 4$. \pvs
    For all $(n-1)/4\geq 5$ such that $0 \leq k_1 \leq 2$, $0 \leq k_3 \leq 3$ there is no $\a\leq \g$ such that $3\a+k_1=(n-1)/4+1$ and $4\g+k_3= (n-1)/4$. Therefore, when $(n-1)/4 \geq5$, Player $1$ wins for all $(n-1)/4 \equiv 0,1 \pmod 4$, by Theorem \ref{thm:B=0} Parts \ref{thm:k2=1} and \ref{thm:k2=0}. When $(n-1)/4 \equiv 2,3 \pmod 4$, Player $2$ wins for all $(n-1)/4$ by Theorem \ref{thm:B=0N} Parts \ref{thm:k2=2} and \ref{thm:k2=3}.\pvs
    Then, it is only necessary to explicitly consider the cases when $(n-1)/4\leq 4$. We outline the value of $n$, the cases, the winners, and the part of Theorems \ref{thm:B=0} and \ref{thm:B=0N} that determines the winner below. \begin{align*}
    &\text{Value of }n &&\text{Board Setup} &&& \text{Winner} &&&& \text{Theorem}\\
       &n=1 &&(1,0,0) = (3(0)+1,0,4(0)+0) &&& \text{Player 1} &&&& \ref{thm:B=0} \text{ Part } \ref{thm:st10}\\
       &n=5 &&(2,0,1) = (3(0)+2,0,4(0)+1)&&& \text{Player 1} &&&& \ref{thm:B=0} \text{ Part } \ref{thm:st21}\\
       &n=9 &&(3,0,2) = (3(1)+0,0,4(0)+2)&&& \text{Player 2} &&&& \ref{thm:B=0N} \text{ Part } \ref{thm:ns02}\\
       &n=13 &&(4,0,3)= (3(1)+1,0,4(0)+3) &&& \text{Player 2} &&&& \ref{thm:B=0N} \text{ Part } \ref{thm:ns13}\\
       &n=17 &&(5,0,4) = (3(1)+2,0,4(1)+0)&&& \text{Player 2} &&&& \ref{thm:B=0N} \text{ Part } \ref{thm:ns20}
    \end{align*}\pvs
    Therefore, Player $1$ wins for all $n\equiv 1,5 \pmod{16}$ such that $n\neq 17$ and Player $2$ wins for all $n \equiv 9,13 \pmod{16}$, as well as $n=17$.
 \vspace{.25cm}\\
    \textbf{Proof for }\bm{$n \equiv 2 \pmod 4$}.\par
    Player $2$ sets down the board as $((n-2)/4+2, 0, (n-2)/4)$ by Theorem \ref{F4evensplit}. For $n\equiv 2,6 \pmod{16},$ we have$ (n-2)/4 \equiv 0,1 \pmod 4$ while $n\equiv 10,14 \pmod{16} $ implies$ (n-2)/4 \equiv 2,3 \pmod 4$. For all $(n-2)/4\geq 1$, and  $0 \leq k_1 \leq 2$, $0 \leq k_3 \leq 3$ there is no $\a\leq \g$ such that $3\a+k_1=(n-2)/4+2$ and $4\g+k_3=(n-2)/4$. When $(n-2)/4 \equiv 0,1 \pmod 4$, Player $2$ wins for all $(n-2)/4 \geq 1$ by Theorem \ref{thm:B=0} Parts \ref{thm:k2=1} and \ref{thm:k2=0}.When $(n-2)/4 \equiv 2,3 \pmod 4$, Player $1$ wins for all $(n-1)/4 \geq 1$ by Theorem \ref{thm:B=0N} Parts \ref{thm:k2=2} and \ref{thm:k2=3}.\pvs
    Then, it is only necessary to explicitly consider the case $(2,0,0)$, which Player $1$ wins by Theorem \ref{thrm: allina}. Hence, Player $2$ wins for all $n\equiv 2,6 \pmod{16}$ such that $n\neq 2$ and Player $1$ wins for all $n \equiv 10,14 \pmod{16}$, as well as $n=2$.\\
 \vspace{.25cm}\\
    \textbf{Proof for }\bm{$n \equiv 3 \pmod 4$}.\par
    By Theorem \ref{F4evensplit}, Player $1$ either sets down $((n-3)/4+3, 0,(n-3)/4)$ or $((n-3)/4,0,(n-3)/4+1)$.  \pvs
    For $n\equiv 3,7 \pmod{16},$ we have $(n-3)/4 \equiv 0,1 \pmod 4$. Here, Player $1$ should set down $((n-3)/4+3, 0,(n-3)/4)$. Then, for all $(n-3)/4\geq 0$, and $0 \leq k_1 \leq 2$, $0 \leq k_3 \leq 3$ there is no $\a\leq \g$ such that $3\a+k_1=(n-3)/4+3$ and $4\g+k_3=(n-3)/4$. So, for all $n\equiv 3,7 \pmod{16}$, Player $1$ wins by Theorem \ref{thm:B=0} Parts \ref{thm:k2=1} and \ref{thm:k2=0}.\pvs
    Next, for $n \equiv 15 \pmod {16},$ we have $(n-3)/4+1 \equiv 0 \pmod 4$. Player $1$ should set down $((n-3)/4,0,(n-3)/4+1)$; then, for all $(n-3)/4 \geq 12$, and $0 \leq k_1 \leq 2$, $0 \leq k_3 \leq 3$ there is no $\a\leq \g$ such that $3\a+k_1=(n-3)/4$ and $4\g+k_3=(n-3)/4+1$. Then, Player $1$ wins by Theorem \ref{thm:B=0} Part \ref{thm:k2=0}. We explicitly consider $n\equiv 15 \pmod {16}$ such that $(n-3)/4 \leq 11$ below.
    \begin{align*}
    &\text{Value of }n &&\text{Board Setup} &&& \text{Winner} &&&& \text{Theorem}\\
       &n=15 &&(3,0,4) = (3(1)+0,0,4(1)+0) &&& \text{Player 1} &&&& \ref{thm:B=0} \text{ Part } \ref{thm:st00}\\
       &n=31 &&(7,0,8)= (3(2)+1,0,4(2)+0) &&& \text{Player 1} &&&& \ref{thm:B=0} \text{ Part } \ref{thm:st10}\\
       &n=47 &&(11,0,12) = (3(3)+2,0,4(3)+0)&&& \text{Player 2} &&&& \ref{thm:B=0N} \text{ Part } \ref{thm:ns20}\\
        &n=47 &&(14,0,11) = (3(4)+2,0,4(2)+3)&&& \text{Player 2} &&&& \ref{thm:B=0N} \text{ Part } \ref{thm:ns23}
    \end{align*}\pvs
    Lastly, for $n\equiv 11 \pmod{16},$ we have $(n-3)/4 \equiv 2 \pmod 4$ and $(n-3)/4+1 \equiv 3 \pmod 4$.\\
    Therefore, Player $2$ can force Player $1$ to set the board such that $c \equiv 2,3 \pmod 4$. If Player $1$ places $((n-3)/4+3, 0,(n-3)/4)$, then since there exists no $\a\leq \g$ that satisfies $3\a+k_1=(n-3)/4+3$ and $4\g+k_3=(n-3)/4$, Player $2$ wins by Theorem \ref{thm:B=0} Part \ref{wink2=1}.\pvs
    If Player $1$ places $((n-3)/4,0,(n-3)/4+1)$, Player $2$ wins for all $(n-3)/4 \geq 12$ by Theorem \ref{thm:B=0} Part \ref{thm:k2=3}. We explicitly consider $n\equiv 11 \pmod {16}$ such that $(n-3)/4 \leq 11$ below.
    \begin{align*}
    &\text{Value of }n &&\text{Board Setup} &&& \text{Winner} &&&& \text{Theorem}\\
       &n=11 &&(2,0,3)=(3(0)+2,0,4(0)+3) &&& \text{Player 2} &&&& \ref{thm:B=0N} \text{ Part } \ref{thm:ns23}\\
       &n=27 &&(6,0,7)=(3(2)+0,0,4(1)+3) &&& \text{Player 2} &&&& \ref{thm:B=0N} \text{ Part } \ref{thm:ns03}\\
       &n=43 &&(10,0,11)=(3(3)+1,0,4(2)+3) &&& \text{Player 2} &&&& \ref{thm:B=0N} \text{ Part } \ref{thm:ns13}
    \end{align*}\pvs
    So then, we find that Player $1$ has a constructive winning solution for all $n \equiv 1,3,5,7,8,10,12,14,15 \pmod {16}$ such that $n\neq 17,47$, for which Player $2$ has a winning solution. Player $2$ has a constructive winning solution for all $n \equiv 0,2,4,6,9,11,13 \pmod {16}$ such that $n\neq 2,32$, for which Player $1$ has a winning solution.
    \end{proof}
\section{Future Work}
To determine a constructive solution for the Empty Board $F_4$ Black Hole Zeckendorf Game as we defined it, it was only necessary to determine which player wins for any given $(a,0,c)$. Still, a complete analysis of $(a,b,c)$ for any value of $b$ could give more insight into the original problem, and could be an interesting area for future work. Redefining the Empty Board  $F_4$ Black Hole Zeckendorf Game to allow players to place in the $F_2$ column would also be interesting, as the TweedleDee-TweedleDum strategy is no longer as applicable. \pvs
Note also that the strategy of reducing modulo $F_m$ is not as immediately successful when the black hole is at $F_m$ such that $m\geq 5$. See below an example of a game with a black hole on $F_5=8$.
\begin{center}
    \begin{center}
    \begin{tikzpicture}
\node (ST1) at (0,0){$\ST{(n,0,0,0)}$};
	\node(NS1) at (0,-1.5){$\NS{(n-2,1,0,0)}$};
        \node (ST2) at (0,-3){$\ST{(n-3,0,1,0)}$};
            \node(NS2) at (0,-4.5){$\NS{(n-5,1,1,0)}$};
                \node (ST3) at (0,-6){$\ST{(n-6,0,2,0)}$};
                    \node(NS3i) at (3,-7.5){$\NS{(n-5,0,0,1)}$};
                    \node(NS3ii) at (-3,-7.5){$\NS{(n-8,1,2,0)}$};
                        \node (ST4) at (0,-9){$\ST{(n-7,1,0,1)}$};
                             \node(NS4i) at (3,-10.5){$\NS{(n-8,0,1,1)}$};
                              \node(NS4ii) at (-3,-10.5){$\NS{(n-9,2,0,1)}$};

            \draw  (ST1) -- node[right] {M}(NS1);
            \draw  (NS1) -- node[right] {A\tsub{1}}(ST2);
            \draw  (ST2) -- node[right] {M}(NS2);
            \draw  (NS2) -- node[right] {A\tsub{1}}(ST3);
            \draw  (ST3) -- node[right=4mm] {S\tsub{3}}(NS3i);
            \draw  (ST3) -- node[left=4mm] {M}(NS3ii);
            \draw  (NS3i) -- node[left=4mm] {M}(ST4);
            \draw  (NS3ii) -- node[right=4mm] {S\tsub{3}}(ST4);
            \draw  (ST4) -- node[right=4mm] {A\tsub{1}}(NS4i);
            \draw  (ST4) -- node[left=4mm] {M}(NS4ii);
        \end{tikzpicture}
\end{center}
    \end{center}\pvs
In the typical Zeckendorf game with $n=8$, Player $1$ is forced to place $(n-8,0,1,1)$. When we play on $4$ columns but with more than $8$ pieces,  Player $1$ can now also place $(n-9,2,0,1)$, making it challenging for Player $2$ to place a piece in the black hole without giving Player $1$ the opportunity to do so first. Expanding to black holes on higher Fibonacci numbers may very well prove a fruitful step in determining a constructive solution to the original Zeckendorf game.
 \newpage

\section*{Acknowledgement(s)}

We are extremely grateful for the detailed comments from the referees, which led to significantly improved expositions in many places and fixed errors in some of the proofs. Special thanks to Paul Baird-Smith, whose code we edited to play through the Zeckendorf Black Hole game and to Kyle Burke for his guidance on combinatorial game theory notation.

\section*{Funding}

Authors were supported by Williams College, The Finnerty
Fund,  The College of William \& Mary Charles Center, The Cissy Patterson Fund, and NSF Grant DMS2241623.

\section{References}

\bigskip

\appendix
\section{Base Case Games for \texorpdfstring{$(a,0,0)$}{(a,0,0)}}\label{bcaseallina}

For the base cases of Theorem \ref{thrm: allina}, we have the following games, showing that $(a,0,0)$ is a $P$ position for $a=1,3,4,5,7$ and is an $N$ position for $a=2$.
\begin{center}
    \begin{tikzpicture}

    \node(ST1) at (0,0){$\ST{(1,0,0)}$};
    \node(ST2) at(2,0){$\ST{(2,0,0)}$};
    \node(NS2) at (2,-1.5){$\NS{(0,1,0)}$};
    \node(ST3) at (4,0){$\ST{(3,0,0)}$};
        \node(NS3) at (4,-1.5){$\NS{(1,1,0)}$};
            \node(ST3i) at (4,-3){$\ST{(0,0,1)}$};
    \node(ST4) at (6,0){$\ST{(4,0,0)}$};
        \node(NS4) at (6,-1.5){$\NS{(2,1,0)}$};
            \node(ST4i) at (6,-3){$\ST{(1,0,1)}$};

    \node(ST5) at (8,0){$\ST{(5,0,0)}$};
    \node(NS5) at (8,-1.5){$\NS{(3,1,0)}$};
        \node(ST5i) at (8,-3){$\ST{(2,0,1)}$};
            \node(NS5i) at (8,-4.5){$\NS{(0,1,1)}$};
                \node(ST5ii) at (8, -6){$\ST{(0,0,0)}$};
    \node(ST7) at (10, 0){$\ST{(7,0,0)}$};
        \node(NS7) at (10,-1.5){$\NS{(5,1,0)}$};
            \node(ST7i) at (10,-3){$\ST{(4,0,1)}$};
                \node(NS7i) at (10,-4.5){$\NS{(2,1,1)}$};
                    \node(ST7ii) at (10, -6){$\ST{(1,0,2)}$};
                        \node(NS7ii) at (10, -7.5){$\NS{(2,0,0)}$};
                            \node(ST7iii) at (10, -9){$\ST{(0,1,0)}$};

    \draw (ST2) -- node[right]{M}(NS2);

    \draw (ST3) -- node[right]{M}(NS3);
    \draw (NS3) -- node[right]{A\tsub{1}}(ST3i);

    \draw (ST4) -- node[right]{M}(NS4);
    \draw (NS4) -- node[right]{A\tsub{1}}(ST4i);

    \draw (ST5) -- node[right]{M}(NS5);
    \draw (NS5) -- node[right]{A\tsub{1}}(ST5i);
    \draw (ST5i) -- node[right]{M}(NS5i);
    \draw (NS5i) -- node[right]{A\tsub{2}}(ST5ii);

    \draw (ST7) -- node[right]{M}(NS7);
    \draw (NS7) -- node[right]{A\tsub{1}}(ST7i);
    \draw (ST7i) -- node[right]{M}(NS7i);
    \draw (NS7i) -- node[right]{A\tsub{1}}(ST7ii);
    \draw (ST7ii) -- node[right]{S\tsub{3}}(NS7ii);
    \draw (NS7ii) -- node[right]{M}(ST7iii);
    \end{tikzpicture}
    \end{center}

\section{Base Case Games for \texorpdfstring{$(0,0,c)$}{(0,0,c)}}\label{bcaseallinc}
For the base cases of Theorem \ref{thrm: allinc}, we have the following games, showing that $(1,0,c)$ is a $P$ position for $c=2,4,8$ and an $N$ position for $c=3$. Note that $(1,0,0)$ and $(1,0,1)$ are trivially $P$ positions. 
\begin{center}
    \begin{tikzpicture}
        \node(NS4) at (0,0){$\ST{(1,0,2)}$};
            \node(ST4i) at (0,-1.5){$\NS{(2,0,0)}$};
                \node(NS4i) at (0,-3){$\ST{(0,1,0)}$};

        \node(NS5) at (3,0){$\ST{(1,0,3)}$};
            \node(ST5i) at (3,-1.5){$\NS{(2,0,1)}$};
                \node(NS5i) at (3,-3){$\ST{(0,1,1)}$};
                    \node(ST5ii) at (3,-4.5){$\NS{(0,0,0)}$};

    \draw (NS4) -- node[right]{S\tsub{3}}(ST4i);
    \draw (ST4i) -- node[right]{M}(NS4i);

    \draw (NS5) -- node[right]{S\tsub{3}}(ST5i);
    \draw (ST5i) -- node[right]{M}(NS5i);
    \draw (NS5i) -- node[right]{A\tsub{2}}(ST5ii);

        \node(NS6) at (6,0){$\ST{(1,0,4)}$};
            \node(ST6i) at (6,-1.5){$\NS{(2,0,2)}$};
                \node(NS6i) at (6,-3){$\ST{(3,0,0)}$};
                    \node(ST6ii) at (6,-4.5){$\NS{(1,1,0)}$};
                        \node(NS6ii) at (6,-6){$\ST{(0,0,1)}$};

        \node(NS10) at (9,0) {$\ST{(1,0,8)}$};
            \node(ST10i) at (9, -1.5){$\NS{(2,0,6)}$};
                \node(NS10i) at (9,-3){$\ST{(3,0,4)}$};
                \node(ST10iia) at (8,-4.5){$\NS{(1,1,4)}$};
                    \node(NS10iia) at (8, -6){$\ST{(0,0,5)}$};
                \node(ST10iib) at (10, -4.5){$\NS{(4,0,2)}$};
                    \node(NS10iib) at (10, -6){$\ST{(5,0,0)}$};

    \draw (NS6) -- node[right]{S\tsub{3}}(ST6i);
    \draw (ST6i) -- node[right]{S\tsub{3}}(NS6i);
    \draw (NS6i) -- node[right]{M}(ST6ii);
    \draw (ST6ii) -- node[right]{A\tsub{1}}(NS6ii);

    \draw (NS10) -- node[right]{S\tsub{3}}(ST10i);
    \draw (ST10i) -- node[right]{S\tsub{3}}(NS10i);
    \draw (NS10i) -- node[left=3mm]{M}(ST10iia);
    \draw (ST10iia) -- node[right]{A\tsub{1}}(NS10iia);
    \draw (NS10i) -- node[right=3mm]{S\tsub{3}}(ST10iib);
    \draw (ST10iib) -- node[right]{S\tsub{3}}(NS10iib);

    \end{tikzpicture}
\end{center}
\section{Base Case Games for Lemma \ref{lemma:nck12}:}\label{basecasea1c}
We showed in Lemma \ref{lemma:nck12} that $(3\a+1,1,4\g+k_3)$ is an $N$ position for all $4\g+\a+k_3-1 \neq 3$ and that $(3\a+2,1,4\g+k_3)$ is an $N$ position for all $4\g+\a+k_3+1 \neq 3$. We show here that the exception cases are also $N$ positions. Note that $\g \neq 0$ can only be true when in the first equation $\g=1, \a=k_3=0$. Here, the board is set as $(1,1,4)$, from which the next player can place $(0,0,5)$, winning by Theorem \ref{thrm: allinc}. For all other cases, $\g=0$ since $\a,\g,k_3 \in \Znn$. \pvs
First, see that $(3\a+1,1,4\g+k_3)$ is an $N$ position for $\a+k_3 = 4$. The possible solutions $(\a, k_3)$ such that $\a,k_3 \in \Znn$ and $0 \leq k_3 \leq 3$ are $(1,3)$, $(2,2)$, $(3,1)$, and $(4,0)$. We show that the corresponding game positions $(4,1,3)$, $(7,1,2)$, $(10,1,1)$, and $(12,0,1)$ are $N$ positions in the game trees below. 
\begin{center}
    \begin{tikzpicture}
\node (ST1) at (-4,0){$\ST{(4,1,3)}$};
	\node (NS1) at (-4,-1.5){$\NS{(3,0,4)}$};
        \node (ST1i) at (-3,-3){$\ST{(4,0,2)}$};
            \node (NS1i) at (-3,-4.5){$\NS{(5,0,0)}$};
        \node (ST1ii) at (-5,-3){$\ST{(1,1,4)}$};
            \node (NS1ii) at (-5,-4.5){$\NS{(0,0,5)}$};

 \node (ST2) at (-1,0){$\ST{(7,1,2)}$};
	\node (NS2) at (-1,-1.5){$\NS{(7,0,1)}$};

 \node (ST3) at (2,0){$\ST{(10,1,1)}$};
	\node (NS3) at (2,-1.5){$\NS{(10,0,0)}$};

 \node (ST4) at (5,0){$\ST{(13,1,0)}$};
	\node (NS4) at (5,-1.5){$\NS{(12,0,1)}$};
            \draw  (ST1) -- node[right] {A\tsub{1}}(NS1);
            \draw  (NS1) -- node[right=2mm] {S\tsub{3}}(ST1i);
            \draw  (NS1) -- node[left=2mm] {M}(ST1ii);
             \draw  (ST1i) -- node[right] {S\tsub{3}}(NS1i);
             \draw  (ST1ii) -- node[right] {A\tsub{1}}(NS1ii);
            \draw  (ST2) -- node[right] {A\tsub{2}}(NS2);
            \draw  (ST3) -- node[right] {A\tsub{2}}(NS3);
            \draw  (ST4) -- node[right] {A\tsub{1}}(NS4);
        \end{tikzpicture}
\end{center}
$(5,0,0)$ and $(10,0,0)$ win by Theorem \ref{thrm: allina}. $(0,0,5)$ wins by Theorem \ref{thrm: allinc}. $(7,0,1)$ and $(12,0,1)$ win by Corollary $\ref{coralla}$. Therefore, $(3\a+2,1,4\g+k_3)$ is an $N$ position.\pvs
Next consider $(3\a+2,1,4\g+k_3)$ which must be an $N$ position for all $\a+k_3 = 2$ when $\a,k_3 \in \Znn$ and $0 \leq k_3 \leq 3$. The possible solutions $(\a, k_3)$ to $\a+k_3= 2$ such that $\a,k_3 \in \Znn$ are $(0,2)$, $(1,1)$ and $(2,0)$. We show that the corresponding game positions $(2,1,2)$, $(5,1,1)$, and $(8,1,0)$ are all $N$ positions. 
\begin{center}
    \begin{tikzpicture}
\node (ST1) at (-3,0){$\ST{(2,1,2)}$};
	\node (NS1) at (-3,-1.5){$\NS{(2,0,1)}$};

 \node (ST2) at (0,0){$\ST{(5,1,1)}$};
	\node (NS2) at (0,-1.5){$\NS{(5,0,0)}$};

 \node (ST3) at (3,0){$\ST{(8,1,0)}$};
	\node (NS3) at (3,-1.5){$\NS{(7,0,1)}$};
            \draw  (ST1) -- node[right] {A\tsub{2}}(NS1);
            \draw  (ST2) -- node[right] {A\tsub{2}}(NS2);
            \draw  (ST3) -- node[right] {A\tsub{1}}(NS3);
        \end{tikzpicture}
\end{center}
$(2,0,1)$ and $(7,0,1)$ win by Corollary $\ref{coralla}$ and $(5,0,0)$ wins by Theorem \ref{thrm: allina}. Therefore, $(3\a+2,1,4\g+k_3)$ is an $N$ position.\pvs

\section{Code}
In this project, we modified code by Paul Baird-Smith for \cite{BEFM1} to function for the Black Hole Zeckendorf game. Our code can be found here:  \href{https://github.com/JennaShuff/ZeckendorfGameBlackHoles2024}{https://github.com/JennaShuff/ZeckendorfGameBlackHoles2024} while Baird-Smith's code is available at \href{https://github.com/paulbsmith1996/ZeckendorfGame/}{https://github.com/paulbsmith1996/ZeckendorfGame/} .

\end{document}